\DeclareMathAlphabet{\mathpzc}{OT1}{pzc}{m}{it}
\DeclareMathAlphabet{\mathcalligra}{T1}{calligra}{m}{n}
\DeclareFontShape{T1}{calligra}{m}{n}{<->s*[2.2]callig15}{}
\numberwithin{equation}{section}
\theoremstyle{plain}
\newtheorem{thm}{Theorem}[section]
\newtheorem{lem}[thm]{Lemma}
\newtheorem{cor}[thm]{Corollary}
\newtheorem{prop}[thm]{Proposition}
\theoremstyle{definition}
\newtheorem{defn}{Definition}[section]
\newtheorem{exam}[thm]{Example}
\newtheorem{ntz}{Notation}[section]
\newtheorem{rmk}[thm]{Remark}
\newcommand\crdim{\mathrm{dim}_{CR}}
\newcommand\crcodim{\mathrm{codim}_{CR}}
\newcommand\Hb{\mathbb{H}}
\DeclareMathAlphabet{\mathpzc}{OT1}{pzc}{m}{it}
\DeclareMathOperator{\cL}{\mathcal{L}}
\newcommand\cO{\mathscr{O}}
\newcommand\so{\Tilde{\mathscr{O}}}
\newcommand\sO{\Tilde{\mathscr{O}}_{M}}
\newcommand\tiO{\Tilde{\mathscr{O}}}
\newcommand\sfE{\textsf{E}}
\DeclareMathOperator{\cW}{\mathcal{W}}
\DeclareMathOperator{\cU}{\mathcal{U}}
\DeclareMathOperator{\R}{{\mathbb{R}}}
\DeclareMathOperator{\cC}{{\mathcal{C}}}
\DeclareMathOperator{\rank}{rank}
\DeclareMathOperator{\N}{\mathrm{N}}
\DeclareMathOperator{\Z}{\mathbb{Z}}
\DeclareMathOperator{\im}{{\mathrm{Im}}}
\DeclareMathOperator{\re}{{\mathrm{Re}}}
\DeclareMathOperator{\C}{\mathbb{C}}
\newcommand\GL{\mathbf{GL}}
\newcommand\sfB{\textsf{B}}
\newcommand\sq{\mathpzc{s}}
\newcommand\rr{\mathpzc{r}}
\newcommand\tq{\mathpzc{t}}
 \newcommand\op{\mathrm{p}_0}
\DeclareMathOperator{\D}{\mathrm{D}}
\newcommand\CP{\mathbb{CP}}
\newcommand\Gr{\textrm{Gr}}
\newcommand\sfX{\mathsf{X}}
\newcommand\sfA{\mathsf{A}}
\newcommand\sfY{\mathsf{Y}}
\newcommand\Id{\mathrm{I}}
\newcommand\Ci{\mathcal{C}^\infty}
\newcommand\cS{\mathcal{S}}
\newcommand\Jd{\mathrm{J}}
\newcommand\rk{\mathrm{rank}}
\newcommand\pct{\mathrm{p}}
\newcommand\qct{\mathrm{q}}
\newcommand\iq{\mathpzc{i}}
\newcommand\pq{\mathpzc{p}}
\newcommand\qq{\mathpzc{q}}
\newcommand\cH{\mathcal{H}}
\newcommand\cF{\mathcal{F}}
\newcommand\Cg{\mathfrak{C}}
\newcommand\cc{\textfrak{C}}
\newcommand\Og{\mathcal{O}}
\newcommand\zq{\mathpzc{z}}
\newcommand\cI{\mathscr{I}}
\newcommand\cN{\mathcal{N}}
\newcommand\vq{\mathpzc{v}}
\newcommand\wq{\mathpzc{w}}
\newcommand\uq{\mathpzc{u}}
\newcommand\rtt{\texttt{r}}
\newcommand\Zz{\textgoth{Z}}
\newcommand\cE{\mathcal{E}}
\newcommand\codim{\mathrm{codim}}
\newcommand\sfM{\textsf{M}}
   \def\DHLhksqrt#1#2{\setbox0=\hbox{$#1\sqrt{#2\,}$}\dimen0=\ht0
     \advance\dimen0-0.2\ht0
     \setbox2=\hbox{\vrule height\ht0 depth -\dimen0}
     {\box0\lower0.4pt\box2}}
\title[$LACR$ functions, maximum principle and holomorphic extension]
{Locally approximable $CR$ functions, a sharp maximum modulus principle
and holomorphic extension}
\author[M.~Nacinovich]{Mauro Nacinovich}
\address{Mauro Nacinovich:
Dipartimento di Matematica\\ II Universit\`a di Roma
``Tor Ver\-ga\-ta''\\ Via della Ricerca Scientifica\\ 00133 Roma
(Italy)}
\email{nacinovi@mat.uniroma2.it}
\author[E.~Porten]{Egmont Porten}
\address{Egmont Porten: Department of Mathematics\\ 
Mid Sweden University\\ 85170 Sundsvall (Sweden)}
\email{Egmont.Porten@miun.se}
\date{today}
\subjclass[2000]{Primary: 32V10
Secondary: 32V05, 32V25, 32V30,  32D10, 32D20, 14J17}
\keywords{$CR$-embedding, weak pseudoconcavity, maximum modulus principle, $CR$ singularity,
holomorphic extension}
\date{\today}
\begin{document}
\maketitle

\begin{abstract}
We introduce a notion of \textit{locally approximable continuous $CR$ functions}
on locally closed subsets of reduced complex spaces, 
generalizing both holomorphic functions and $CR$ functions on $CR$ submanifolds. 
Under additional assumptions of \textit{set-theoretical weak pseudoconcavity} 
we prove optimal maximum modulus principles for these functions. 
Restricting to real submanifolds (possibly with CR singularities) of complex manifolds,
we generalize results on holomorphic extension known for $CR$ submanifolds.
\end{abstract}
\tableofcontents

\maketitle

\section*{Introduction}
Since 19th century, maximum modulus principles and 
extension of holomorphic functions
have been persistent themes in complex analysis. 
More recently, these topics 
reemerged 
in the realm of the study of $CR$ functions on 
$CR$ manifolds, with increasingly close connections  
with
envelopes of holomorphy. \par
In this paper we approach  
these issues 
in a way 
which applies 
to more general subsets of complex manifolds, in particular to real submanifolds 
with $CR$ singularities.
A strong motivation  
for allowing
$CR$ singularities are topological obstructions 
to embedding smooth compact $CR$ manifolds into complex Stein manifolds 
and to 
removing 
local $CR$ singularities by small deformations.
\par
As a point of departure, we introduce the sheaf ${\tiO_{\!\sfA}}$
of \textit{locally approximable continuous $CR$ functions}, 
$LACR$  
for short,
on locally closed subsets~$\sfA$ of reduced complex spaces. 
By the Baouendi-Treves approximation theorem, we get back the usual continuous $CR$ functions 
if $\sfA$ is a smooth $CR$ manifold.
In general, one needs \emph{repeated} uniform local approximations
in order to obtain 
the Fr\'echet sheaf 
${\tiO_{\!\sfA}},$
its definition requiring 
a general construction by transfinite induction, which was pioneered by Rickard
(\cite{Ric}).
The construction abstractly  requires several approximation steps and for us
the general question on the ordinal number of needed steps 
remains open, although we get some
partial results and we conjecture that 
countably many are sufficient when $\sfA$ is a manifold.
\par
In some cases 
$\tiO_{\sfA}$ may even coincide with the sheaf of continuous functions
(by Stone--Weierstrass  theorem this is the case when $\sfA$ is a totally real
submanifold). 
Thus in our investigation 
we  turn to geometric conditions on $\sfA$ ensuring that sections of ${\tiO_{\!\sfA}}$
share some properties of the 
holomorphic functions.
Following the work of several authors building on seminal work \cite{Ro57} by Rothstein,
we introduce and study the notions   of set-theoretic 
$\pq$-pseudoconvexity and $\qq$-pseudoconcavity.
As our first major results we obtain in  
\S\ref{sec-max}, \S\ref{sec-4}
sharp maximum modulus principles
on \emph{set-theoretically weakly $1$-pseudoconcave} locally closed sets. 
One of the attractive features of set-theoretic $\qq$-pseudoconcavity is
that it  
behaves 
more naturally 
under
slicing by complex objects than the
corresponding notion for $CR$ manifolds, 
yielding 
in particular 
immediate corollaries 
for real submanifolds 
carrying 
$CR$ singularities.
\par
The maximum modulus principle being established for set-theoretically weakly pseudoconcave sets,
we turn to extension theorems.
It is known that there is a close connection between maximum modulus principles
and local holomorphic extension of $CR$ functions from $CR$ manifolds.
Our general setting requires new methods,
since the domains for successive approximations may depend on the individual $LACR$ function.
We will restrain our attention first to submanifolds $\sfM\,{\subset}\, \sfX$ 
carrying isolated $CR$ singularities,
a situation appearing most naturally for submanifolds of codimension $2,$
motivating considerable recent research activity (see e.g. \cite{Coff, Gar, LNR}).
We investigate
local holomorphic extension to full neighbourhoods in $\sfX.$ 
For $CR$ manifolds this problem is intimately linked to regularity theory and to weak pseudoconcavity
(see \cite{NP2015} and references therein),
the general intuition being that extension to full neighbourhoods 
(or the equivalent concept of $CR$-hypoellipticity)
should be equivalent to weak pseudoconcavity complemented by a certain amount of nonintegrability
of the holomorphic tangent bundle.
Here we prove that set-theoretic weak pseudoconcavity at an isolated $CR$ singularity is a sufficent condition,
provided $M$ has the local holomorphic extension property at all sufficiently 
close 
nearby points. 
\par
We generalize this result to CR singularities of positive dimension satisfying appropriate geometric properties.
\par
In the latter case it is no longer immediate how to construct examples.
In the final section, we approach the question via a general study of the Gauss map
of a smooth real submanifold in complex space.
This gives control on nonisolated CR singularities
and insight into the question to which extent our examples are stable 
under appropriate small deformations.
As a byproduct, we extend transversality results of \cite{Cof2009,Gar} 
to submanifolds of codimension larger than $2$.

\section{Set-theoretical 
pseudo-convexity and concavity} \label{sec-1}
Let $f_1,\hdots,f_m$ be holomorphic functions on an
open subset $U$ of $\C^N.$
The set 
\begin{equation} \label{eq-1.1}
 Z{=}\{\zq{\in}{U}\mid {f}_1(\zq){=}0,\hdots,f_m(\zq){=}0\}
\end{equation}
of their 
common zeros  is called
an \textit{analytic subvariety} of $U.$ Denote by 
$\cO_{U}$ the sheaf of germs of holomorphic functions
on $U$ and by $\cI_{Z}$ its ideal subsheaf generated by
$f_1,\hdots,f_m.$ The quotient sheaf $\cO_{U}/\cI_{Z}$
induces  a sheaf
of rings $\cO_{Z}$ 
on $Z.$ 
The \emph{ringed space} $(Z,\cO_{Z})$ is a
\emph{local model}. 
 A \emph{complex space} (see, e.g. \cite{fischer1976}) 
is a Hausdorff 
ringed space $(\sfX,\cO_\sfX)$ such that each point 
$\pct$ of $\sfX$ has a neighbourhood
$\omega_{\pct}$ with 
$(\omega_{\pct},\cO_{\sfX}|_{\omega_{\pct}})$ 
isomorphic  to a local model. 
We say that the local model $(Z,\cO_Z)$ is \emph{reduced} if 
$\cI_{\!Z}$ consists of the germs 
of holomorphic functions vanishing on~$Z$ 
and call $(\sfX,\cO_{\sfX})$
\emph{reduced} if all its local models are reduced. 
This is equivalent to the fact that the ideal
$\cN_{\sfX}$ 
of nilpotent germs of $\cO_{X}$
is trivial. The dimension of $(\sfX,\cO_{\sfX})$ at a point
$\pct\,{\in}\,\sfX$ is the Krull dimension of the local ring 
$\cO_{\sfX,\pct}.$ 
In the following, while considering 
a reduced complex space 
$(\sfX,\cO_{\sfX})$ of pure dimension $n,$
we 
will often simplify notation by omitting
to indicate the structure
 sheaf $\cO_{\sfX}.$ \par 
 For the 
 classical
 notion of 
 $\qq$-pseudoconvexity/pseudoconcavity 
 for smooth complex manifolds
 we refer e.g. 
 to \cite{demailly1997}.
\par 
Let $\sfA$ be a  
locally 
closed subset of a reduced complex space 
$\sfX$ of pure dimension~$n.$
\begin{lem}\label{lem-1.1}
For a point
 $\pct$  
 of $\sfA$ 
the following are equivalent: 
\begin{enumerate}
\item There is 
an open neighbourhood $\omega_{\pct}$ of $\pct$ in $\sfX,$
a  
local isomorphism  $$\zq:(\omega_{\pct},\cO_{\sfX}|_{\omega_{\pct}})
\to(Z,\cO_{Z})$$ with a local model 
\eqref{eq-1.1},
and a smooth function $\rhoup\in\Ci(U,\R)$ such that,
for some open subset $\omegaup'_{\pct}$ with $\pct\,{\in}\,\omegaup'_{\pct}\,{\subseteqq}\,\omegaup_{\pct},$ 
\begin{equation} \label{eq-1.2}
\begin{cases} 
\zq(A\,{\cap}\,\omegaup'_{\pct})\,{\subseteq}\,\{\rhoup(\zq){\leq}\rhoup(\zq(\pct))\},
\\
\iq^{{-1}}\partial\bar\partial\rhoup(\zq(\pct))\;\; 
\text{has at least
 $N{-}\qq{+}1$ positive eigenvalues.}
 \end{cases}
\end{equation}
\item For every local model
\eqref{eq-1.1} at $\pct,$ we can find a smooth function 
$\rhoup\in\Ci(U,\R)$  for which \eqref{eq-1.2}
holds true. 
\end{enumerate}
 \end{lem}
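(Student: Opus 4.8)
The implication (2)$\,\Rightarrow\,$(1) is immediate: by definition every point of a complex space has a neighbourhood isomorphic to a local model, so (2) in particular furnishes one embedding for which \eqref{eq-1.2} holds. The content of the lemma is the converse (1)$\,\Rightarrow\,$(2), namely that the condition, although stated through a single chosen embedding, does not depend on it.

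The plan rests on the standard fact (see \cite{fischer1976}) that a morphism of local models is, after shrinking, induced by a holomorphic map of the ambient open sets: given two local models $\zq_1\colon\omega_\pct\to(Z_1,\cO_{Z_1})\subseteq U_1\subseteq\C^{N_1}$ and $\zq_2\colon\omega_\pct\to(Z_2,\cO_{Z_2})\subseteq U_2\subseteq\C^{N_2}$ at $\pct$, the transition isomorphism and its inverse are restrictions to $Z_1,Z_2$ of holomorphic maps $\Psi\colon U_2\to U_1$ and $\Phi\colon U_1\to U_2$ with $\Psi(\zq_2(a))=\zq_1(a)$, $\Phi(\zq_1(a))=\zq_2(a)$ for $a\in\omega_\pct$. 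I record two behaviours under such maps. First, for any holomorphic $\Psi$ realizing one of these transitions and any $\rhoup$ smooth near $q^{\ast}:=\Psi(q)$, the composite $\rhoup\circ\Psi$ is smooth near $q$, it carries the sublevel inclusion and base value of \eqref{eq-1.2} along because $\Psi$ maps $\zq_2(A\cap\omega')$ into $\zq_1(A\cap\omega')$, and the Levi form pulls back:
\[
\iq^{{-1}}\partial\bar\partial(\rhoup\circ\Psi)(q)(v,v)=\iq^{{-1}}\partial\bar\partial\rhoup(q^{\ast})\big(d\Psi_q v,\,d\Psi_q v\big).
\]
Second, the inertia is governed by linear algebra: a Hermitian form of positive index $p$ (number of positive eigenvalues) on $\C^{N}$ restricts to any $d$-dimensional subspace with positive index at least $p+d-N$, while a congruence by an invertible matrix leaves the index unchanged. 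In particular the condition is invariant under biholomorphisms of the ambient $\C^{N}$.

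I would then run the argument through the intrinsic minimal embedding. Let $e=\dim_\C\mathfrak{m}/\mathfrak{m}^{2}$ be the embedding dimension of $\sfX$ at $\pct$. For a given model $\zq\colon\omega_\pct\to Z\subseteq\C^{N}$, after a linear change of coordinates placing the Zariski tangent space of $Z$ at the base point into $\C^{e}\times\{0\}$, the projection $\pi\colon\C^{N}\to\C^{e}$ identifies $Z$ near the base point with a graph $\{z''=\gamma(z')\}$ over a minimal model in $\C^{e}$, and any two minimal models of the germ are related by a biholomorphism of $\C^{e}$. By the invariance under ambient biholomorphisms it thus suffices to prove two transfers: (a) from an embedding to its minimal model, and (b) from the minimal model to any graph embedding over it; chaining (a), the biholomorphic identification of the two minimal models, and (b) then moves \eqref{eq-1.2} from $\zq_1$ to $\zq_2$. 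For (a) I take $\rhoup_e:=\rhoup\circ\Phi$ with $\Phi\colon\C^{e}\to\C^{N}$ the graph map; since $d\Phi$ is injective with image the tangent space $T$ to $Z$, the restriction estimate gives positive index at least $(N-\qq+1)+e-N=e-\qq+1$, as required, and the sublevel inclusion is preserved as above. For (b) I set $\rhoup_N:=\rhoup_e\circ\pi+|z''-\gamma(z')|^{2}$, the added term being nonnegative and identically zero on $Z_N\supseteq\zq_N(\sfA\cap\omega')$, so that the sublevel inclusion and the base value are untouched.

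The heart of the matter, and the step I expect to be most delicate, is the eigenvalue count in (b). Writing $P:=\pi^{\ast}\big(\iq^{{-1}}\partial\bar\partial\rhoup_e\big)$ for the degenerate pullback and $Q$ for the Levi form of $|z''-\gamma(z')|^{2}$ at the base point, one computes $Q(v,v)=|v''-d\gamma(v')|^{2}$, so $Q$ is positive semidefinite with kernel exactly $T$, while $P$ depends only on $v'=\pi(v)$ and is positive definite on a subspace $W\subseteq T$ of dimension $e-\qq+1$. Taking the transverse coordinate space $W''=\{0\}\times\C^{N-e}$, which meets $T$ only at the origin, the cross terms $P(w,w'')$ vanish since $\pi(w'')=0$, and for $w\in W$, $w''\in W''$ one finds $(P+Q)(w+w'',w+w'')=P(w,w)+|w''|^{2}>0$ unless $w+w''=0$. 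Hence $P+Q$ is positive definite on $W\oplus W''$, a subspace of dimension $(e-\qq+1)+(N-e)=N-\qq+1$, so the Levi form of $\rhoup_N$ has at least $N-\qq+1$ positive eigenvalues. This establishes (b), and with it the full chain, completing the proof.
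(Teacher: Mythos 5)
Your proof is correct, and it follows a route that is recognizably different from the paper's, though built from the same two elementary moves. The paper never mentions minimal embeddings: given the two models in $\C^{N}$ and $\C^{N'}$, it forms the \emph{joint} embedding $\zetaup=(\phiup,\psiup)$ into $\C^{N+N'}$, transfers the condition \emph{up} by adding $\sum_i|\zq'_i-g_i(\zq)|^2$ (the squares of the holomorphic graph equations relating the two coordinate systems), and then transfers \emph{down} to $\C^{N'}$ by pulling back along the graph immersion $\zq'\mapsto(g'(\zq'),\zq')$. You instead go \emph{down} to the minimal embedding $\C^{e}$ by pulling back along a graph map, identify the two minimal models by a biholomorphism of $(\C^{e},0)$, and go back \emph{up} by adding $|z''-\gamma(z')|^{2}$ --- structurally the dual factorization, through the smallest common ambient rather than the largest. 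What the paper's version buys is self-containedness: it needs only that sections of $\cO_Z$ extend to holomorphic functions on $U$ (which is the definition of the quotient structure sheaf), whereas you must additionally invoke the embedding-dimension theory (graph presentation over a minimal model, uniqueness of minimal embeddings up to ambient biholomorphism) --- standard facts, but extra machinery. What your version buys is that the eigenvalue bookkeeping is fully displayed: your computation that $P+Q$ is positive definite on $W\oplus W''$ of dimension $(e-\qq+1)+(N-e)$ is exactly the verification the paper leaves implicit when it asserts that the complex Hessian of $\tilde{\rhoup}$ has at least $N+N'-\qq+1$ positive eigenvalues, and your observation that the ``downward'' step costs at most the codimension in the positive index is likewise only implicit in the paper's final pullback to $U'$.
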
 
\begin{proof}
Assume that we have two local models at $\pct$ 
and that the first 
one 
satisfies \eqref{eq-1.2} with $\omegaup'_{\pct}\,{=}\,\omegaup_{\pct}.$ 
We can assume that they are defined on the same open
neighbourhood $\omega_{\pct}$ of $\pct$ in $\sfX$ by functions 
$\zq_{i}\,{=}\,\phiup_{i}\,{\in}\,\cO_{\sfX}(\omega_{\pct})$ ($1{\leq}i{\leq}N$)
and $\zq'_{i}\,{=}\,\psiup_{i}\,{\in}\,\cO_{\sfX}(\omega_{\pct})$ ($1{\leq}i{\leq}N'$),
yielding the respective isomorphisms 
with the local models. 
Let us consider the immersion  
\begin{equation*}
 \zetaup:\omega_{\pct}\ni\qct \to (\phiup_{1}(\qct),\hdots,\phiup_{N}(\qct),
 \psiup_{1}(\qct),\hdots,\psiup_{N'}(\qct))\in U\times{U}'\subset\C^{N+N'}.
\end{equation*}\par
The pullbacks to $Z$ 
of the coordinate functions 
$\zq'_1,\hdots,\zq'_{N'}$ belong to 
$\cO_{Z}(Z)$ and are hence, on a neighbourhood of
$\zq(\pct)$ in $Z,$  restrictions of 
functions $g_1,\hdots,g_{N'},$ defined  and holomorphic 
on an open neighourhood of $\zq(\pct)$ in $U.$
By shrinking, we may assume 
that
they are defined
on $U.$ 
\par
Then $\zq'_1{-}g_1(\zq),\hdots,\zq'_{N'}{-}g_{N'}(\zq)$ 
are holomorphic on $U{\times}U'$ and
vanish on 
$Z''\,{=}\,\{(\phiup(\qct),\psiup(\qct))\mid \qct\,{\in}\,\omega_{\pct}\}
\,{\subset}\,{U}\,{\times}\,{U}'.$ 
Since we
assumed that \eqref{eq-1.2} holds for the local model $(Z,\cO_Z),$ 
then, 
 the complex Hessian of  
\begin{equation*}
 \tilde{\rhoup}(\zq,\zq')=\rhoup(\zq){+}
 {\sum}_{i{=}1}^{N'}|\zq_i'-g_i(\zq)|^2
\end{equation*}
 has at $(\zq(\pct),\zq'(\pct))$ 
 at least $N{+}N'{-}\qq{+}1$ positive eigenvalues. 
\par 
As we did for the $\zq'_i,$ after shrinking we can assume that
the pullbacks of the coordinates $\zq_{i}$ to $Z'$ are restrictions
of holomorphic functions $g'_{1},\hdots,g'_{N}$ which are defined on $U'.$
They yield holomorphic functions 
 $z_i{-}g'_i(\zq')$ on $U\,{\times}\,U'$ that
 vanish on $Z'',$
for $1{\leq}i{\leq}N.$ 
The map $$U'{\ni}\zq'{\to}(g'(\zq'),\zq'){\in}{U}{\times}U'$$
is a holomorphic immersion mapping $Z'$ to $Z''.$ Then
the pullback of $\tilde{\rhoup}$ to $U'$ 
satisfies \eqref{eq-1.2} for the local model $(Z',\cO_{Z'}).$ 
\end{proof}
Based on similar arguments, one can intrinsically introduce
the notions of smooth functions and of smooth $\qq$-plurisubharmonic functions 
on complex spaces, see e.g. \cite{demailly1997}.
\begin{rmk}
 Condition \eqref{eq-1.2} is equivalent to 
 \begin{equation} \label{eq-1.2'} \tag{\ref{eq-1.2}$'$}
\begin{cases} 
\zq((A\,{\cap}\,\omegaup'_{\pct})\,{\backslash}\{\pct\})\,{\subset}\,\{\rhoup(\zq){<}\rhoup(\zq(\pct))\},
\\
\iq^{{-1}}\partial\bar\partial\rhoup(\zq(\pct))\;\; 
\text{has at least
 $N{-}\qq{+}1$ positive eigenvalues.}
 \end{cases}
\end{equation}
Indeed \eqref{eq-1.2} is still satisfied on a smaller neighbourhood of $\pct$ after
substituting $\rhoup$ by \;
$\qct\,{\mapsto}\,\left(\rhoup(\qct)\,{-}\,\epsilonup{\sum}_{i=1}^{N}|\zq_{i}(\qct)-\zq_{i}(\pct)|^{2}\right)$\;
for $0{<}\epsilonup{\ll}1.$ 
\end{rmk}

\begin{defn}\label{defn1.1a} Let $\pct\,{\in}\,\sfA.$ 
We say that $\sfA$ is 
\emph{set-theoretically $\qq$-pseudoconvex at $\pct$} 
if  
conditions
\eqref{eq-1.2} in Lemma~\ref{lem-1.1} are verified. 

\par
We say that $\sfA$ is 
\emph{set-theoretically weakly $\qq$-pseudoconcave at $\pct$}
when it is not set-theoretically 
\mbox{$\qq$-pseudoconvex} at $\pct$.\par
We will drop the clause 
{\textquotedblleft{at~$\pct$}\textquotedblright} 
when the conditions hold at all
points. 
\end{defn}

\begin{rmk} \label{rmk-1.2}
We can define a category whose objects are  
locally 
closed
continuous 
embeddings
$(\imath:\sfA\hookrightarrow\sfX)$ 
of an abstract
Hausdorff space $\sfA$  into a reduced complex
space 
$\sfX$ 
and identify 
 two elements $(\imath:\sfA\hookrightarrow\sfX)$ and
$(\imath':\sfA'\hookrightarrow\sfX')$ if there is a third  
$(\imath'':\sfA''\hookrightarrow\sfX'')$,
homeomorphisms $\sfA\leftrightarrow \sfA''$, 
$\sfA'\leftrightarrow{\sfA''}$ and 
holomorphic immersions $\jmath:X''\hookrightarrow X$
and $\jmath':X''\hookrightarrow X'$, which make the 
following
diagram commute 
\begin{equation*}
 \xymatrix{ & \sfA \ar[rr]^{\imath}&& \sfX\\
 \sfA'' \ar[rr]_{\imath''} \ar@{<->}[ur] \ar@{<->}[dr] 
 && \sfX'' \ar[ur]_{\jmath'} \ar[dr]^{\jmath'}\\
 & \sfA' \ar[rr]^{\imath'} && \sfX'.
 }
\end{equation*}
One can easily check that this 
is in fact an equivalence relation. 
The objects of the quotient category 
can be considered as \textit{abstract  
locally 
closed subsets
of reduced complex spaces}.
\end{rmk}
By Lemma~\ref{lem-1.1}, if $(\sfA\,{\hookrightarrow}\,\sfX)$ and $(\sfA'\,{\hookrightarrow}\,\sfX')$
 are equivalent objects of the category of
 Remark~\ref{rmk-1.2}, then $\sfA$ is set-theoretically 
$\qq$-pseudoconvex/pseu\-do\-con\-cave
at a point $\pct\in{\sfA}$ iff $\sfA'$ 
enjoys the same property at the corresponding point $\pct'$ of 
${\sfA}'.$
This was the reason 
for not explicitly mentioning  
the \textit{ambient space} $\sfX$ in
Definition~\ref{defn1.1a}.

\par \smallskip
For example, if $\sfX$ is an $n${-}dimensional complex manifold,
the closure of a strictly 
pseudoconvex
domain $D\Subset
\sfX$ is set-theoretically $\qq$-pseudoconvex, 
for all $1{\leq}{\qq}{\leq}{n},$ 
at each $\pct\,{\in}\,\partial{D}$
and weakly $n$-pseudo\-con\-cave at each  ${\pct\,{\in}\, D}$.   
\subsection{Complex subvarieties and smooth real submanifolds of complex manifolds}
Let $\sfA$ be an analytic subset of the reduced complex space $\sfX$ 
and $\pct$ a point of $\sfA$. If $\dim_{\C}(\sfA,\pct)\,{=}\,d,$ then $\sfA$ is
weakly set-theoretically pseudoconcave at $\pct$ for all $\qq{\leq}d.$ 
If, moreover, $\pct$ is a regular point of $\sfA,$ then $\sfA$ is set-theoretically
$\pq$-pseudoconvex at $\pct$ for all $\pq{>}d.$ 

\par
Vice versa, we have:
\begin{lem}\label{lem1.4}
 Let $\sfA$ be a $\cC^2$-smooth submanifold of 
 a complex manifold $\sfX,$ with complex structure 
 $\Jd\,{:}\mathrm{T}\sfX\,{\to}\,\mathrm{T}\sfX.$ 
 If $\sfA$ is set-theoretically 
 weakly $\qq$-pseudoconcave at $\pct,$ then 
 $\dim_{\R}(\mathrm{T}_{\pct}\sfA\cap\Jd(\mathrm{T}_{\pct}\sfA))\geq{2\qq}.$ 
 In particular, the real dimension of $\sfA$ is at least $2\qq$ and 
a real $2\qq${-}dimensional submanifold $\sfA$ of $\sfX$
which is  set-theoretically 
 weakly $\qq$-pseu\-do\-con\-cave at all points is a complex submanifold
 of complex dimension $\qq$ of $\sfX.$ 
\end{lem}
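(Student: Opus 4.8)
The plan is to prove the contrapositive of the first assertion and then read off the two consequences. Since $\sfX$ is a manifold, I would first choose holomorphic coordinates centred at $\pct$, identifying a neighbourhood of $\pct$ with an open set $U\subseteq\C^{n}$, $\pct=0$, and $\Jd$ with multiplication by $\iq$; by Lemma~\ref{lem-1.1} it then suffices to exhibit the data of \eqref{eq-1.2} for the single (trivial) local model $(U,\cO_{U})$, so that $N=n$. Write $T=\mathrm{T}_{0}\sfA$, let $H=T\cap\Jd T$ be its maximal $\Jd$-invariant (i.e.\ complex) subspace, and put $h=\dim_{\C}H$, so $\dim_{\R}(T\cap\Jd T)=2h$. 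The goal is: if $2h<2\qq$, i.e.\ $h\le\qq-1$, then $\sfA$ is set-theoretically $\qq$-pseudoconvex at $\pct$ in the sense of Definition~\ref{defn1.1a}, hence not weakly $\qq$-pseudoconcave there.

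The second step is a linear normal form for the pair $H\subseteq T$, obtained by a $\C$-linear change of coordinates; such a change is biholomorphic and acts on the complex Hessian by Hermitian congruence, hence preserves both the conditions \eqref{eq-1.2} and the number of positive eigenvalues. I would first arrange $H=\C^{h}\times\{0\}$. Using that $H$ is the \emph{maximal} complex subspace of $T$, the projection of $T$ to $\C^{n-h}$ has totally real image $T'$ of real dimension $k=\dim_{\R}T-2h$, and since $H\subseteq T$ one checks $T=\C^{h}\times T'$. A further $\C$-linear rotation of $\C^{n-h}$ puts $T'=\R^{k}\times\{0\}$. Writing the resulting coordinates as $z=(z',w,\zeta)$ with $z'\in\C^{h}$, $w\in\C^{k}$, $\zeta\in\C^{\,n-h-k}$, we then have $T=\{\im w=0,\ \zeta=0\}$.

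The third step is the construction and verification. I would take
\[
\rhoup(z)=-\mu\,|z'|^{2}-|\re w|^{2}+A\,|\im w|^{2}+B\,|\zeta|^{2},\qquad A>1,\ \mu,B>0 .
\]
Its complex Hessian at $0$ is block diagonal: the $z'$-block gives $h$ negative eigenvalues, while the $w$-block gives $\tfrac{A-1}{2}>0$ and the $\zeta$-block gives $B>0$, for a total of $k+(n-h-k)=n-h$ positive eigenvalues; since $h\le\qq-1$ this is $\ge n-\qq+1$, so the second line of \eqref{eq-1.2} holds. For the sublevel condition I would parametrize $\sfA$ near $0$ by its tangential coordinates $(s,\tau)\in\C^{h}\times\R^{k}$: tangency of $\sfA$ to $T$ together with the $\cC^{2}$ regularity of $\sfA$ gives $z'=s+O(2)$, $\re w=\tau+O(2)$, $\im w=O(2)$ and $\zeta=O(2)$, where $O(2)$ denotes terms vanishing to second order at $0$. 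Substituting, the positive terms $A|\im w|^{2}$ and $B|\zeta|^{2}$ are $O(|(s,\tau)|^{4})$, so $\rhoup|_{\sfA}=-\bigl(\mu|s|^{2}+|\tau|^{2}\bigr)+O(|(s,\tau)|^{3})<0$ for $0<|(s,\tau)|\ll1$. Thus $\rhoup(0)=0$ is a strict local maximum of $\rhoup|_{\sfA}$, which is exactly \eqref{eq-1.2'}; being equivalent by the Remark to \eqref{eq-1.2}, it shows $\qq$-pseudoconvexity and proves $\dim_{\R}(\mathrm{T}_{\pct}\sfA\cap\Jd\,\mathrm{T}_{\pct}\sfA)\ge 2\qq$.

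Finally, $\dim_{\R}\sfA\ge\dim_{\R}(\mathrm{T}_{\pct}\sfA\cap\Jd\,\mathrm{T}_{\pct}\sfA)\ge2\qq$ is immediate. If $\dim_{\R}\sfA=2\qq$ and $\sfA$ is weakly $\qq$-pseudoconcave at every point, the inequality forces $\mathrm{T}_{\pct}\sfA=\mathrm{T}_{\pct}\sfA\cap\Jd\,\mathrm{T}_{\pct}\sfA$ for all $\pct$, i.e.\ every tangent space is $\Jd$-invariant; writing $\sfA$ locally as a $\cC^{2}$ graph over its tangent space, $\Jd$-invariance of the tangent spaces is equivalent to $\C$-linearity of the differential of the graphing map, i.e.\ to the Cauchy--Riemann equations, so the graph is holomorphic and $\sfA$ is a complex submanifold of dimension $\qq$. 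The main obstacle is the second-order estimate in the third step: one must guarantee that the non-quadratic, merely $\cC^{2}$ part of $\sfA$ cannot reverse the sign of $\rhoup|_{\sfA}$. This is exactly where the normal form is essential — the totally real directions $\re w$ carry the strictly negative quadratic $-|\re w|^{2}$, whereas their conjugate directions $\im w$ and the purely transverse $\zeta$ meet $\sfA$ only to second order, so the competing positive terms are of order four and are dominated by the negative quadratic.
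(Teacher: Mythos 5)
Your proof is correct, and it follows the same overall strategy as the paper: argue the contrapositive by producing, when $2h=\dim_{\R}(\mathrm{T}_{\pct}\sfA\cap\Jd(\mathrm{T}_{\pct}\sfA))<2\qq$, a function satisfying \eqref{eq-1.2} whose complex Hessian at $\pct$ has $n-h\geq n-\qq+1$ positive eigenvalues. The only genuine difference is how that function is built. The paper takes defining functions $\rhoup_{1},\hdots,\rhoup_{d}$ of $\sfA$ and uses $r_{\epsilon}=\rhoup_{1}+\epsilon^{-1}\sum_{i}\rhoup_{i}^{2}-\epsilon\|\zq\|^{2}$: the containment $\sfA\subseteq\{r_{\epsilon}\leq 0\}$ is then automatic (on $\sfA$ one has $r_{\epsilon}=-\epsilon\|\zq\|^{2}$), and for small $\epsilon$ the term $\epsilon^{-1}\sum_{i}\rhoup_{i}^{2}$ makes the complex Hessian positive definite on the $\C$-linear span of $\partial\rhoup_{1},\hdots,\partial\rhoup_{d}$, whose dimension is exactly $n-h$ --- no normal form or graph parametrization is required. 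You instead put $\mathrm{T}_{\pct}\sfA$ into the linear normal form $\C^{h}\times\R^{k}\times\{0\}$ and write down an explicit quadratic, verifying the sublevel condition through a graph estimate; this is more work, but it displays concretely which directions (the totally real ones and the transverse ones) supply the $n-h$ positive eigenvalues and why the merely $\cC^{2}$ remainder cannot reverse the sign of the restriction. Your handling of the two consequences, including the Cauchy--Riemann argument showing that $\Jd$-invariance of every tangent space makes $\sfA$ a complex submanifold, matches what the paper leaves implicit.
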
 
\begin{proof} Let $m{=}\dim_{\R}(\sfA).$ 
 We can assume that $\sfX$ is an open ball in $\C^n$, 
 centered at $\pct\,{=}\,0\,{\in}\,{\sfA},$ and 
 $\sfA\,{=}\,\{\rhoup_1\,{=}\,0,\hdots,\rhoup_d\,{=}\,0\},$ with $d\,{=}\,2n{-}m,$ 
 for smooth real valued functions $\rhoup_1,\hdots,\rhoup_d
 \in\Ci(\sfX)$ with $d\rhoup_1(z)\wedge\cdots\wedge{d}\rhoup_d(z)\neq{0}$ for $z\in\sfX.$  
We have  
\begin{equation*}
 \sfA\subseteq\left\{r_{\epsilonup}=\rhoup_{1}
 +\epsilonup^{{-1}}{\sum}_{{i=1}}^{d}\rhoup_{i}^{2}-\epsilonup\|\zq\|^{2}\,{\leq}\,0
 \right\}
 \subset\sfX
\end{equation*}
 for all ${\epsilon}{>}0.$ For ${\epsilon}{>}0$ sufficiently small, the complex Hessian of
$r_{\epsilon}$ is positive-definite on the 
$\C$-linear span of 
$\partial\rhoup_1,\hdots,\partial\rhoup_d,$ which has complex dimension 
$n\,{-}\dim_{\C}(\mathrm{T}_{\pct}\sfA\,{\cap}\,\Jd(\mathrm{T}_{\pct}\sfA)).$ 
This observation yields the statement.
\end{proof}
By a classical result of Hartogs 
the last statement of 
Lemma\,\ref{lem1.4}
is still valid for 
\emph{continuous} submanifolds.
\begin{cor}\label{cor1.4}
A complex analytic
subvariety of pure dimension $\qq$ of   a complex
manifold $\sfX$ is set-theoretically weakly $\qq$-pseudoconcave.
\end{cor}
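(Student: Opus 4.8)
The plan is to argue by contradiction straight from Definition~\ref{defn1.1a}. Since $\sfA$ has pure dimension $\qq$, being set-theoretically weakly $\qq$-pseudoconcave means exactly that $\sfA$ fails to be set-theoretically $\qq$-pseudoconvex at \emph{every} point, so I fix an arbitrary $\pct\in\sfA$, assume the equivalent conditions of Lemma~\ref{lem-1.1} do hold there, and look for a contradiction. By Lemma~\ref{lem-1.1} I am free to work in any local model: I obtain a holomorphic chart $\zq\colon\omegaup_{\pct}\to(Z,\cO_Z)\subset\C^N$, a point $p_0=\zq(\pct)$, and $\rhoup\in\Ci(U,\R)$ such that the pure $\qq$-dimensional analytic set $W:=\zq(\sfA\cap\omegaup'_{\pct})$ satisfies $W\subseteq\{\rhoup\leq\rhoup(p_0)\}$, while $\iq^{{-1}}\partial\bar\partial\rhoup(p_0)$ has at least $N{-}\qq{+}1$ positive eigenvalues. (This is precisely the pure-dimensional instance, with $d=\qq$, of the assertion recorded just before Lemma~\ref{lem1.4}.)

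The key device is to slice $W$ by the positive directions of the Levi form. I choose a complex linear subspace $V\subset\C^N$ with $\dim_\C V=N{-}\qq{+}1$ on which $\iq^{{-1}}\partial\bar\partial\rhoup(p_0)$ is positive definite; by continuity the restriction $\rhoup|_{p_0+V}$ is then strictly plurisubharmonic on a neighbourhood of $p_0$ in the affine space $p_0+V$. I set $C:=W\cap(p_0+V)$. Since $p_0+V$ is cut out in $\C^N$ by $\qq{-}1$ linear equations, the standard lower bound for the dimension of the intersection of an analytic germ with a linear subspace gives $\dim_{p_0}C\geq\qq-(\qq{-}1)=1$, so $C$ is a positive-dimensional analytic germ through $p_0$ contained in $\{\rhoup\leq\rhoup(p_0)\}$.

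Finally I close the argument with the one-variable maximum principle. Choosing an irreducible branch of $C$ through $p_0$ and normalizing it, I produce a non-constant holomorphic disc $\phi\colon\Delta\to C\subseteq p_0+V$ with $\phi(0)=p_0$, after shrinking $\Delta$ so that $\rhoup|_{p_0+V}$ is plurisubharmonic on the image. The composite $u:=\rhoup\circ\phi$ is then subharmonic with $u\leq\rhoup(p_0)=u(0)$, so the maximum principle forces $u\equiv\rhoup(p_0)$; but at every $\zeta$ with $\phi'(\zeta)\neq0$ one has $\tfrac{\partial^{2}u}{\partial\zeta\,\partial\bar\zeta}(\zeta)=\big(\iq^{{-1}}\partial\bar\partial\rhoup\big)\big(\phi'(\zeta),\overline{\phi'(\zeta)}\big)>0$ by strict plurisubharmonicity of $\rhoup|_{p_0+V}$, so $u$ cannot be constant --- the desired contradiction. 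I expect the only genuine obstacle to be the treatment of the \emph{singular} points of $W$: at a regular point this is just the Levi-form computation on the $\qq$-dimensional tangent space, but at a singular point there is no tangent space to intersect with $V$. This is exactly why I slice by $p_0+V$ at the outset --- the intersection-dimension inequality furnishes a positive-dimensional germ regardless of whether $p_0$ is a smooth or a singular point of $W$, and a single non-constant holomorphic disc, guaranteed by normalization, then suffices to finish the argument uniformly.
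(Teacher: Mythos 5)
Your proof is correct and follows essentially the same route as the paper's: both slice by an $(n{-}\qq{+}1)$-dimensional complex plane on which $\rhoup$ is strictly plurisubharmonic and derive a contradiction from the maximum principle for the resulting subharmonic function on a positive-dimensional analytic intersection, the paper arranging transversality so that the slice is exactly a curve and citing the maximum principle on analytic sets, while you get positivity of the intersection dimension from the standard intersection-dimension inequality and make the maximum principle explicit via a holomorphic disc. The one loose point is that normalizing an irreducible branch of $C$ yields a disc only when that branch is one-dimensional; if it has higher dimension you should first cut down by further generic hyperplanes, or simply invoke the standard fact that every positive-dimensional analytic germ contains a nonconstant holomorphic disc through the given point --- a trivial repair.
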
 
\begin{proof} Let $\sfA\,{\subset}\,\sfX$ be a complex subvariety of $\sfX$ and $\pct$ a point of $\sfA.$ 
Assume by contradiction that there is a smooth real valued $\rhoup,$ defined on a neighbourhood
$U$ of $\pct$ in $\sfX$ 
whose
Hessian $\iq^{-1}\partial\bar{\partial}\rhoup(\pct)$
has 
at least $n{-}\qq{+}1$ positive eigenvalues, vanishing at $\pct$ and
 such that 
$\sfA\,{\backslash}\{\pct\}\,{\subset}\,\{\rhoup\,{<}\,0\}.$ 
Then we can find 
a smooth $n{-}\qq{+}1$ dimensional smooth submanifold $\sfY$ of an open neighbourhood of
$\pct$ in 
$\sfX,$ with $\pct\,{\in}\,\sfY$ and
such that the restriction $\rhoup'$ of $\rhoup$ to $\sfY$ 
is strictly 
plurisubharmonic on $\sfY.$ 
 We can also assume that\footnote{If $Y_{1},Y_{2}$ are two smooth submanifolds
 of a manifold $\sfX$ and $\pct\,{\in}\,\sfX,$ the symbol $Y_{1}\,{\pitchfork}_{\pct}Y_{2}$
 means that they are \emph{transversal} at $\pct,$
i.e.  that  either
     $\pct\notin\sfY_{1}\cap{\sfY}_{2},$ or $\pct\in\sfY_{1}\cap{Y}_{2}$ 
     and $\mathrm{T}_{\pct}\sfY_{1}+\mathrm{T}_{\pct}\sfY_{2}=\mathrm{T}_{\pct}\sfX.$
     We write $\sfY_{1}\,{\pitchfork}\,{Y}_{2}$ if $\sfY{\pitchfork_{\pct}}Y_{2}$ for all $\pct\in\sfX.$}  
 $\sfY\,{\pitchfork}_{\,\pct}\,\sfA.$ 
 Transverality means that, if $\zq_1,\hdots,\zq_n$
are holomorphic coordinates at $\pct$ such that $\sfY{=}\{\zq_{1}{=}\,0,\hdots,\zq_{\qq{-}1}{=}\,0\}$ 
then $(\zq_1,\hdots,\zq_{\qq{-}1})$  define a regular sequence in $\cO_{\sfA,\pct}.$ 
Then $\sfA'=\sfA\cap\sfY$ is a complex curve in $\sfY$ and the fact that the subharmonic 
$\rhoup|_{\sfA'}$ has an isolated maximum at $\pct$ yields a contradiction (see e.g. \cite{scvVII}).
\end{proof}
Let  $\sfA$ be a locally closed
real smooth submanifold of a complex manifold $\sfX.$ Each point $\pct$ of $\sfA$
has an open neighbourhood $U_{\pct}$ in $\sfX$ such that 
\begin{equation}\label{eq-1.3}
 \sfA\cap{U}_{\pct}=\{\qct\in\sfX\mid \rhoup_{1}(\qct)=0,\;\hdots,\,\rhoup_{k}(\qct)=0\},
\end{equation}
with $\rhoup_{1},\hdots,\rhoup_{k}\,{\in}\,\Ci(U_{\pct},\R),$ $d\rhoup_{1}(\qct){\wedge}\cdots{\wedge}
d\rhoup_{k}(\qct)\,{\neq}\,0$ for $\qct\,{\in}\,U_{\pct}$. The integer $k$ is the real codimension of
$\sfA$ in $\sfX.$ The complex linear space 
\begin{equation*}
 \mathrm{T}^{1,0}_{\pct}\sfA=\{\wq\in\mathrm{T}^{1,0}_{\pct}\sfX\mid \partial\rhoup_{j}(\wq)=0,\; j=1,\hdots,k\}
\end{equation*}
(\emph{holomorphic tangent to $\sfA$ at $\pct$})
is independent of the choice of the locally defining functions $\rhoup_{1},\hdots,\rhoup_{k}.$ 
Its dimension, that we will indicate by $\crdim(\sfA,\pct),$  
is the \emph{$CR$-dimension} of $\sfA$ at~$\pct.$ The map $\pct\,{\mapsto}\,\crdim(\sfA,\pct)$
is upper semicontinuous on $\sfA$. We say that at the point $\pct$ the submanifold 
$\sfA$ is 
\begin{itemize}
 \item \emph{$CR$-regular} if $\crdim(\sfA,\,\cdot\,)$ is constant on
 a neighbourhood of $\pct$;
 \item  \emph{$CR$-singular}  otherwise.
\end{itemize}
Set, for $c\,{=}\,(c_{1},\hdots,c_{k})\,{\in}\,\R^{k},$ $\rhoup_{c}\,{=}\,{\sum}_{i=1}^{k}c_{i}\rhoup_{i}.$
For all $C\,{>}\,0,$ we have 
\begin{equation*}
 \sfA\cap{U}_{\pct}\subseteq\left\{\qct\in{U}_{\pct}\left| 
 \rhoup_{c}(\qct)+C{\sum}_{i=1}^{k}\rhoup_{i}^{2}(\qct)
 \leq{0}\right.\right\}.
\end{equation*}
\begin{lem}If the complex Hessian $i{\cdot}\partial\bar{\partial}\rhoup_{c}$ has $\ell$ positive eigenvalues
on $\mathrm{T}^{1,0}_{\pct}\sfA,$ then $\sfA$ is set-theoretically $([\crdim(\sfA,\pct)]{-}\ell{+}1)$-pseudoconvex
at $\pct.$ \end{lem}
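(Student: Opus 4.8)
The plan is to produce directly the defining function demanded by condition \eqref{eq-1.2}, taking it to be the function
\begin{equation*}
 \psiup_C = \rhoup_c + C{\sum}_{i=1}^{k}\rhoup_i^2
\end{equation*}
already introduced just above the statement, for a sufficiently large constant $C>0$. Since $\rhoup_1,\hdots,\rhoup_k$ all vanish on $\sfA\cap U_{\pct}$, the function $\psiup_C$ vanishes identically there, and in particular $\psiup_C(\pct)=0$; thus the inclusion recorded above the lemma gives $\sfA\cap U_{\pct}\subseteq\{\psiup_C\leq 0\}=\{\psiup_C\leq\psiup_C(\pct)\}$, which is precisely the first line of \eqref{eq-1.2} (with $N=n$, as $\sfX$ is a manifold). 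It therefore remains only to check the eigenvalue condition at $\pct$.

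Next I would compute the complex Hessian of $\psiup_C$ at $\pct$. From $\partial\bar\partial(\rhoup_i^2)=2\,\partial\rhoup_i\wedge\bar\partial\rhoup_i+2\rhoup_i\,\partial\bar\partial\rhoup_i$ together with $\rhoup_i(\pct)=0$, the Hermitian form associated with $i\,\partial\bar\partial\psiup_C$ on $\mathrm{T}^{1,0}_{\pct}\sfX$ is
\begin{equation*}
 H_C(\wq)=L_c(\wq)+2C\,Q(\wq),\qquad Q(\wq)={\sum}_{i=1}^{k}|\partial\rhoup_i(\wq)|^2,
\end{equation*}
where $L_c$ denotes the Levi form of $\rhoup_c$ at $\pct$. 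The decisive structural fact is that $Q$ is positive semidefinite with $\ker Q=\{\wq\mid\partial\rhoup_i(\wq)=0,\;i=1,\hdots,k\}=\mathrm{T}^{1,0}_{\pct}\sfA$; since the $\partial\rhoup_i$ span a subspace of $(\mathrm{T}^{1,0}_{\pct}\sfX)^*$ of dimension $n-\crdim(\sfA,\pct)$, the form $Q$ has rank $n-\crdim(\sfA,\pct)$ and is positive definite on every complement $V_1$ of $\mathrm{T}^{1,0}_{\pct}\sfA$ in $\mathrm{T}^{1,0}_{\pct}\sfX$.

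The main step is the eigenvalue count for $H_C$ as $C\to+\infty$. By hypothesis $L_c$ restricted to $\mathrm{T}^{1,0}_{\pct}\sfA$ has $\ell$ positive eigenvalues, so I would fix an $\ell$-dimensional $W_0\subseteq\mathrm{T}^{1,0}_{\pct}\sfA$ on which $L_c$ is positive definite, choose a complement $V_1$ of $\mathrm{T}^{1,0}_{\pct}\sfA$ orthogonal to $W_0$ for a fixed Hermitian metric, and set $W=W_0\oplus V_1$, of dimension $\ell+(n-\crdim(\sfA,\pct))$. Writing $\wq=\wq_0+\wq_1\in W_0\oplus V_1$ and using $\partial\rhoup_i(\wq_0)=0$, hence $Q(\wq)=Q(\wq_1)$, a routine application of Young's inequality to the cross term $2\re\mathcal{L}(\wq_0,\wq_1)$ in the polarization $\mathcal{L}$ of $L_c$ yields an estimate of the shape
\begin{equation*}
 H_C(\wq)\geq\tfrac{a}{2}\|\wq_0\|^2+\bigl(2Cb-M\bigr)\|\wq_1\|^2,
\end{equation*}
with $a>0$ the least eigenvalue of $L_c|_{W_0}$, $b>0$ the least eigenvalue of $Q|_{V_1}$, and $M$ a constant depending only on $L_c$. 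Thus $H_C|_W$ is positive definite for $C$ large, so $H_C$ has at least $\ell+(n-\crdim(\sfA,\pct))$ positive eigenvalues; as $n-\crdim(\sfA,\pct)+\ell=n-(\crdim(\sfA,\pct)-\ell+1)+1$, this is exactly the count $N-\qq+1$ required by \eqref{eq-1.2} for $\qq=\crdim(\sfA,\pct)-\ell+1$, establishing set-theoretic $\qq$-pseudoconvexity at $\pct$. I expect the only mildly delicate point to be this perturbation estimate: one must ensure that the $\ell$ positive directions carried by the holomorphic tangent space survive the addition of the large transverse term $2C\,Q$, which is why I isolate the subspace $W$ and argue positivity there rather than track the full spectrum of $H_C$ directly.
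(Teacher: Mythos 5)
Your proposal is correct and follows the same route as the paper: both use the function $\rhoup_{c}+C{\sum}_{i=1}^{k}\rhoup_{i}^{2}$ together with the inclusion displayed just before the lemma, and both reduce the claim to counting $(n-[\crdim(\sfA,\pct)])+\ell$ positive eigenvalues of its complex Hessian at $\pct$ for $C\gg 0$. The paper simply asserts this eigenvalue count, while you supply the (correct) justification via the rank and kernel of $Q$ and the perturbation estimate on $W_{0}\oplus V_{1}$.
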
 
\begin{proof}
 Indeed, for sufficiently large $C\,{>}\,0,$ the number of positive eigenvalues of the
 complex Hessian of 
\begin{equation*}
 \tag{$*$}
 \qct\,{\to}\,\rhoup_{c}(\qct)+C{\sum}_{i=1}^{k}\rhoup_{i}^{2}(\qct)\end{equation*} 
 at $\pct$ 
is the sum of $\dim_{\C}\sfX{-}[\crdim(\sfA,\pct)]$  and of the number of its positive
eigenvalues on $\mathrm{T}^{1,0}_{\pct}\sfA.$ 
\end{proof}
Vice versa, a smooth $\rhoup$ satisfying the first line of \eqref{eq-1.2'} and vanishing at $\pct$
is bounded from below by a function of the form $(*)$. This yields 
\begin{prop}\label{p1.7}
Let  $\sfA$ be a locally closed
real smooth submanifold of a complex manifold $\sfX,$
locally defined by \eqref{eq-1.3} near its point $\pct.$  \par
Then $\sfA$ is set-theoretically
weakly $\qq$-peudoconcave at $\pct$ if and only if, for all $c\,{\in}\,\R^{k},$  
the complex Hessian $i{\cdot}\partial\bar{\partial}\rhoup_{c}(\pct)$  has
at least $\qq$ nonpositive eigenvalues on $\mathrm{T}^{1,0}_{\pct}\sfA$. \qed
\end{prop}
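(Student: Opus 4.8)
The plan is to phrase everything in terms of eigenvalue counts and to reduce the statement to the implication already furnished by the Lemma preceding this Proposition. Since the property is local and invariant under the equivalence of Remark~\ref{rmk-1.2}, I would assume $\sfX$ is an open subset of $\C^n$ with holomorphic coordinates at $\pct$, so that the ambient complex Hessian is the Levi form on $\mathrm{T}^{1,0}_{\pct}\sfX=\C^n$. Write $d=\crdim(\sfA,\pct)=\dim_{\C}\mathrm{T}^{1,0}_{\pct}\sfA$ and, for $c\in\R^k$, let $p(c)$ be the number of positive eigenvalues of $i\,\partial\bar\partial\rhoup_c(\pct)$ on $\mathrm{T}^{1,0}_{\pct}\sfA$; having at least $\qq$ nonpositive eigenvalues there is exactly the condition $p(c)\le d-\qq$. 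As weak $\qq$-pseudoconcavity is the negation of $\qq$-pseudoconvexity, the Proposition is equivalent to the single claim that $\sfA$ is set-theoretically $\qq$-pseudoconvex at $\pct$ \emph{iff} $p(c)\ge d-\qq+1$ for some $c$. I would also record that $\qq$-pseudoconvexity is monotone in $\qq$: a barrier $\rhoup$ witnessing the $\qq_0$ case has at least $n-\qq_0+1\ge n-\qq+1$ positive eigenvalues for every $\qq\ge\qq_0$.

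First I would dispose of the easy implication. If $p(c)=\ell\ge d-\qq+1$ for some $c$, then the preceding Lemma makes $\sfA$ set-theoretically $(d-\ell+1)$-pseudoconvex at $\pct$; since $d-\ell+1\le\qq$, monotonicity upgrades this to $\qq$-pseudoconvexity. This is precisely the content already supplied by the barrier $\rhoup_c+C\sum_i\rhoup_i^2$ and its Hessian count, so nothing new is needed here.

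The core is the converse. Assume $\sfA$ is $\qq$-pseudoconvex at $\pct$, witnessed by a smooth $\rhoup$ with $\rhoup(\pct)=0$, with $\rhoup\le0$ on $\sfA$ near $\pct$, and with $i\,\partial\bar\partial\rhoup(\pct)$ having at least $n-\qq+1$ positive eigenvalues on $\C^n$. I would argue in three steps. (a) Since $\rhoup|_{\sfA}$ attains a local maximum at $\pct$, its differential annihilates $\mathrm{T}_{\pct}\sfA$, so $d\rhoup(\pct)=\sum_i c_i\,d\rhoup_i(\pct)$ for unique reals $c_i$; set $\rhoup_c=\sum_i c_i\rhoup_i$ and $g=\rhoup-\rhoup_c$. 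Then $g(\pct)=0$, $dg(\pct)=0$, and $g=\rhoup$ on $\sfA$, so $g|_{\sfA}$ has a local maximum at the critical point $\pct$ and $\mathrm{Hess}\,g(\pct)$ is negative semidefinite on $\mathrm{T}_{\pct}\sfA$. (b) For $\wq\in\mathrm{T}^{1,0}_{\pct}\sfA$, expressing $\wq$ through $X\in\mathrm{T}_{\pct}\sfA\cap\Jd\,\mathrm{T}_{\pct}\sfA$, the Levi form of $g$ at the critical point equals $\tfrac14\big(\mathrm{Hess}\,g(X,X)+\mathrm{Hess}\,g(\Jd X,\Jd X)\big)$; as both $X$ and $\Jd X$ lie in $\mathrm{T}_{\pct}\sfA$, this is $\le0$, whence $i\,\partial\bar\partial\rhoup(\pct)\le i\,\partial\bar\partial\rhoup_c(\pct)$ as Hermitian forms on $\mathrm{T}^{1,0}_{\pct}\sfA$. (c) By the min-max principle $p(c)$ is at least the number of positive eigenvalues of $i\,\partial\bar\partial\rhoup(\pct)$ on $\mathrm{T}^{1,0}_{\pct}\sfA$; and restricting a Hermitian form from $\C^n$ to the subspace $\mathrm{T}^{1,0}_{\pct}\sfA$ of codimension $n-d$ lowers the positive eigenvalue count by at most $n-d$, leaving at least $(n-\qq+1)-(n-d)=d-\qq+1$. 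Thus $p(c)\ge d-\qq+1$, as required.

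I expect the main obstacle to be steps (b)--(c): passing correctly from the \emph{ambient} positivity of $i\,\partial\bar\partial\rhoup(\pct)$ to positivity of $i\,\partial\bar\partial\rhoup_c(\pct)$ \emph{along} $\mathrm{T}^{1,0}_{\pct}\sfA$. The delicate point is combining two pieces of bookkeeping, namely the semidefiniteness of $\mathrm{Hess}\,g$ on $\mathrm{T}_{\pct}\sfA$ (which controls $\rhoup-\rhoup_c$ only in the directions $X,\Jd X$ that remain inside $\mathrm{T}_{\pct}\sfA$) and the interlacing estimate for the codimension drop; it is exactly this two-step eigenvalue accounting that converts the single real inequality $\rhoup\le0$ on $\sfA$ into the sharp threshold $p(c)\ge d-\qq+1$, and matching the constants $n-d$ and $d-\qq+1$ is where care is needed.
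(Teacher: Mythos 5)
Your proof is correct and follows essentially the paper's route: the easy direction is exactly the preceding Lemma (the barrier $\rhoup_c+C\sum_i\rhoup_i^2$) together with monotonicity in $\qq$, and the converse is the paper's one-line remark that a barrier $\rhoup$ is comparable to a function of the form $(*)$, which your steps (a)--(c) make precise by comparing the Levi forms of $\rhoup$ and $\rhoup_c$ on $\mathrm{T}^{1,0}_{\pct}\sfA$ at the critical point and then applying interlacing for the codimension drop $n-d$. In fact your formulation at the level of second-order jets restricted to $\mathrm{T}_{\pct}\sfA$ is the careful way to justify the paper's terse ``bounded from below by $(*)$'' assertion, so nothing is missing.
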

The fact that the complex Hessian $i{\cdot}\partial\bar{\partial}\rhoup_{c}(\pct)$  has
at least $\qq$ nonpositive eigenvalues on $\mathrm{T}^{1,0}_{\pct}\sfA$ is usually referred to
as \emph{weak $\qq$-pseudoconcavity}. \par
Thus Prop.\ref{p1.7} can be restated in the form \textquotedblleft{\textsl{For smooth real
submanifolds of complex manifolds the notions of weak $\qq$-pseudoconcavity and
set-theoretic weak $\qq$-pseudoconcavity coincide.}}\textquotedblright
\subsection{Intersections with complex submanifolds}
Slicing will be an important tool for studying properties of set-theoretically 
weakly pseudoconcave locally closed subsets. We prove here the
following Lemma.
\begin{lem}\label{l-1.8}
Let $\sfA$ be a  locally closed set-theoretically weakly 
$\qq$-pseu\-do\-con\-cave subset of a complex manifold $\sfX$
and $\sfY$ a complex submanifold of codimension $d$ in $\sfX.$
Then the intersection $\sfA\,{\cap}\,\sfY$ is set teoretically 
weakly 
$(\qq{-}d)$-pseudoconcave.
\end{lem}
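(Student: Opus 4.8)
The plan is to argue by contraposition, after reducing immediately to codimension $d=1$. Since set-theoretic weak pseudoconcavity is a local, pointwise notion (ambient-independent by Lemma~\ref{lem-1.1} and the Remark following Remark~\ref{rmk-1.2}), it suffices to verify weak $(\qq-d)$-pseudoconcavity at each $\pct\in\sfA\cap\sfY$ inside a chart. Choosing holomorphic coordinates $z=(z_1,\dots,z_n)$ centred at $\pct=0$ with $\sfY=\{z_1=\cdots=z_d=0\}$, set $\sfY_j=\{z_1=\cdots=z_j=0\}$, so that $\sfX=\sfY_0\supset\sfY_1\supset\cdots\supset\sfY_d=\sfY$ is a flag in which each $\sfY_j$ is a smooth complex hypersurface of $\sfY_{j-1}$. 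Applying the codimension-one statement inside $\sfY_{j-1}$ lowers the index by one at each step, so it is enough to treat a hypersurface $\sfY=\{z_1=0\}$ with $z''=(z_2,\dots,z_n)$ and $n=\dim_\C\sfX$. Suppose then, contrary to the claim, that $\sfA\cap\sfY$ is set-theoretically $(\qq-1)$-pseudoconvex at $\pct$. By the Remark after Lemma~\ref{lem-1.1}, I may take a witness in the strict form \eqref{eq-1.2'}: a function $\rhoup\in\Ci$ on $\sfY$ with $\rhoup(\pct)=0$, with $\rhoup<0$ on $(\sfA\cap\sfY)\setminus\{\pct\}$ near $\pct$, and with $\iq^{-1}\partial\bar\partial\rhoup(\pct)$ having at least $(n-1)-(\qq-1)+1=n-\qq+1$ positive eigenvalues on $\sfY$.

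Next I would transplant this to $\sfX$, extending $\rhoup$ to be independent of $z_1$ and adding a penalty in the normal direction:
\[
\rhoup_{C}(z)=\rhoup(z'')-C\,|z_1|^2,\qquad C>0 .
\]
Because $\rhoup$ does not depend on $z_1$, the Levi form $\iq^{-1}\partial\bar\partial\rhoup_C$ is block-diagonal, equal to the Levi form of $\rhoup$ on the $z''$-directions and to $-C$ on the $z_1$-direction; hence its positive eigenvalues coincide with those of $\rhoup$ on $\sfY$. Thus $\rhoup_C$ has at least $n-\qq+1$ positive eigenvalues at $\pct$ --- exactly the threshold $N-\qq+1=n-\qq+1$ required for $\qq$-pseudoconvexity of $\sfA$ in $\sfX$ --- and, by continuity, the same holds on a fixed closed ball $\overline{B}_r=\{|z|\le r\}$ for every $C>0$ simultaneously, provided $r$ is small.

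The engine of the proof is the elementary local maximum principle contained in \eqref{eq-1.2}: if a smooth function whose Levi form has at least $n-\qq+1$ positive eigenvalues attained a local maximum along $\sfA$ at an interior point $\tilde a\in\sfA\cap B_r$, then \eqref{eq-1.2} would hold at $\tilde a$, making $\sfA$ set-theoretically $\qq$-pseudoconvex there and contradicting the standing hypothesis that $\sfA$ is weakly $\qq$-pseudoconcave at \emph{every} point. Since $\sfA$ is locally closed, $\sfA\cap\overline{B}_r$ is compact for $r$ small, so $\rhoup_C$ attains a maximum over $\sfA\cap\overline{B}_r$; by the maximum principle this maximum is taken at some $a_{C}\in\sfA\cap\partial B_r$, and as $\pct$ is interior with $\rhoup_C(\pct)=0$ the maximal value is $\ge 0$. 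Hence $\rhoup(z''(a_C))\ge C\,|z_1(a_C)|^2\ge 0$.

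The delicate point --- and the one place where a naive extension would fail --- is that $a_C$ need not lie on $\sfY$, so a priori it only records a point of $\sfA$ \emph{near} $\sfY$ rather than of $\sfA\cap\sfY$. I would overcome this by keeping $r$ fixed and letting $C\to\infty$: the boundary maximizers $a_C$ all lie in the compact sphere $\partial B_r$, so a subsequence converges to some $a_\ast\in\sfA\cap\partial B_r$ (using again that $\sfA$ is locally closed), and the bound $|z_1(a_C)|^2\le \rhoup(z''(a_C))/C$, whose numerator stays bounded, forces $z_1(a_\ast)=0$, i.e.\ $a_\ast\in(\sfA\cap\sfY)\setminus\{\pct\}$; passing to the limit yields $\rhoup(a_\ast)\ge 0$, contradicting $\rhoup<0$ on $(\sfA\cap\sfY)\setminus\{\pct\}$. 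The main obstacle is therefore exactly the off-$\sfY$ control, resolved by combining the local maximum principle with the compactness furnished by local closedness and the limit $C\to\infty$, which drives the extremal points onto $\sfY$.
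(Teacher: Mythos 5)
Your proof is correct and follows essentially the same route as the paper's: both lift $\rhoup$ to the ambient space independently of the normal coordinates, penalize by $-C\sum_{j\leq d}|z_j|^2$ (the paper treats general codimension $d$ directly rather than reducing to $d=1$ by a flag), and exploit the fact that an interior maximum over $\sfA$ of a function with $n-\qq+1$ positive Levi eigenvalues would contradict weak $\qq$-pseudoconcavity. The only divergence is in the endgame: the paper fixes a single $C$ large enough that $\rhoup_C<0$ on all of $\sfA\cap\partial B_r$, forcing the maximum into the interior where the contradiction is immediate, whereas you let $C\to\infty$ to drive the boundary maximizers onto $\sfY$ and contradict the strict negativity of $\rhoup$ on $(\sfA\cap\sfY)\setminus\{\pct\}$ --- both closings are valid.
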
 \begin{proof}
We argue by contradiction, assuming that $\sfA\,{\cap}\,\sfY$
is not set-theoretically weakly 
$(\qq{-}d)$-pseudoconcave at some point $\pct_0.$  
This means that 
there is a smooth real valued function $\rhoup,$
defined on an open neighourhood $\omegaup$ of $\pct_{0}$
in $\sfY,$ 
vanishing at $\pct_{0},$ 
whose complex hessian 
at $\pct_{0}$ 
has 
at least 
$$(n{-}d)\,{-}\,(\qq{-}d)\,{+}\,1 = n{-}\qq{+}1$$
positive eigenvalues and such that \begin{equation*}
\omegaup\cap\sfY\cap\sfA\cap\{\rhoup{\leq}0\}=\{\pct_{0}\}.
\end{equation*}
\par 
We can as well assume that $\sfA$ is a closed subset of 
the euclidean ball
$\sfX\,{=}\,\{\zq\,{\in}\,\C^n\,{\mid}\,|\zq|\,{<}\,1\},$
that 
the point $\pct_{0}$ is $0$ and 
that 
$\sfY$ is the complex 
$(n{-}d)$-plane $\{\zq_i\,{=}\,0\,{\mid}\, 1{\leq}i{\leq}d\}.$ 
The function $\rhoup$ lifts to a smooth real valued function,
independent of $\zq_1,\hdots,\zq_d,$ 
defined on the cylinder $\C^{d}{\times}\,\omegaup$
in $\C^n,$ 
whose complex hessian has at least $(n{-}\qq{+}1)$ positive
eigenvalues on all $(n{-}d)$-planes parallel to $\sfY.$ 
For real $C{>}0$ we
consider the functions \begin{equation*}
\rhoup_{C}(\zq)=\rhoup(\zq)-C{\sum}_{j=1}^d|\zq_i|^2.
\end{equation*}\par 
Let $0\,{<}\,r\,{<}\,1$ be so small that $\{|\zq|{\leq}r\}\cap\sfY\}
\Subset\omegaup.$ Since $\rhoup$ is strictly negative on
$\sfA{\cap}\sfY{\cap}\{|\zq|{=}r\},$ 
it stays negative on a neighourhood and  
we can find $\epsilonup_1,\epsilonup_2\,{>}
\,0$ such that 
$\rhoup_{C}(\zq){<}\,{-}\epsilonup_1$ if $|\zq|{=}r$
and ${\sum}_{j=1}^d|\zq_j|^2{\leq}\epsilonup_2^2.$ 
As $\rhoup$ is bounded on $\{|\zq|{=}r\},$ 
by taking $C\,{>}\,0$ sufficiently large we can obtain that
$\rhoup_C(\zq){<}{-}\epsilonup_1{<}0$ on  $\sfA\,{\cap}\,\{|\zq|{=}r\}.$ 
Because $\rhoup_{C}(0){=}0,$ 
the restriction of 
$\rhoup_{C}$ to $\sfA{\cap}\{|\zq|{\leq}r\}$
has a nonnegative maximum $c_0$
at some point with $|\zq|{<}r.$
Since the complex hessian of the
function $\rhoup_{C}$ has at least 
$n{-}\qq{+}1$ positive eigenvalues at all points of
$\{|\zq|{\leq}r\},$ the inclusion $A{\cap}\{|\zq|{<}r\}\,{\subseteq}
\,
\{\rhoup_{C}\,{\leq}\,{c}_0\}$ yields a contradiction, proving
our statement. 
\end{proof}
Analogous slicing results were known in the case of embedded pseudoconcave $CR$ manifolds,
under the additional assumption that $\sfY$ is \textit{transverse} to the the holomorphic tangent space 
$H_\pct M$ at each $\pct\in M\cap\sfY,$
ensuring that $M\cap\sfY$ is still a $CR$ manifold.

\begin{exam}
\label{e-1.9} 
Lemma~\ref{l-1.8} 
may be used to construct 
submanifolds which are set-theoret\-i\-cally
pseudoconcave and $CR$-singular at some point. 
We start from the rigid $n$-pseu\-do\-con\-cave $CR$ submanifold
\begin{equation*}
M \coloneqq \begin{cases}
\uq^1{+}\bar{\uq}^1=|\zq|^2-|\wq|^2,\\
\uq^2{+}\bar{\uq}^2
={\sum}_{i=1}^{n}(\zq^i{\cdot}\bar{\wq}^i+\bar{\zq}^i{\cdot}\wq^i),\\
\uq=(\uq^{1},\uq^{2})\in\C^2,\; \zq,\wq\in\C^n
\end{cases} 
\end{equation*}
of $\C^{2n+2}\!{=}
\C^{2}_{\uq}{\times}\,\C^{n}_{z}{\times}\,\C^{n}_{w}.$ \par
For real 
$k\,{>}\,0$ the complex hypersurface $\sfY_k{=}\,\{k\uq^1\,{=}\,\iq\,\uq^2\}$
is transversal, but not $CR$-transversal to $M$ at $0.$
Hence the intersection $M_k=M\,{\cap}\,\sfY_k$ is a smooth submanifold,
which is not $CR$ at $0,$ but is still set-theoretically weakly
$(n{-}1)$-pseudoconcave. \par
Taking holomorphic coordinates $\uq^{1},\zq,\wq$ on $\sfY_{k}$, we get 
\begin{equation*}
 \sfM_{k}:\; 
\begin{cases}
 \uq^1{+}\bar{\uq}^1=|\zq|^2-|\wq|^2,\\
 \uq^{1}{-}\bar{\uq}^{1}=\tfrac{i}{k}(\wq^{*}\zq{+}\zq^{*}\wq).
\end{cases}
\end{equation*}
The two differentials 
\begin{equation*} 
\begin{cases}
 du^{1}-\zq^{*}d\zq+\wq^{*}d\wq,\\
 du^{1}-\tfrac{i}{k}\wq^{*}d\zq-\tfrac{i}{k}\zq^{*}d\wq,
\end{cases}
\end{equation*}
are linearly dependent at points where 
\begin{equation*} \begin{cases}
 \zq={-}\tfrac{i}{k}\wq,\\
 \wq=\tfrac{i}{k}\zq.
 \end{cases}
\end{equation*}
This shows that $0$ is an isolated $CR$ singularity for $k\,{\neq}\,1,$ while for $k\,{=}\,1$
the locus of $CR$ singularities is the complex $n$-plane
$\{\wq\,{=}\,i\zq,\; \uq^{1}{=}0,\,\uq^{2}{=}0\}.$ 

\par
To study the stability of these $CR$ singularities,
we consider the Gauss map $\gammaup\,{:}\,\pct\,{\mapsto}\,\mathrm{T}_{\pct} \sfM_k,$
thought, by using the affine structure of $\sfY_{k},$  
as taking values in the Grassmannian  
$\Gr^{\R}_{4n}(\sfY_{k})$ 
of real $4n$-dimensional
subspaces of~$\sfY_k$.
Being $\sfY_{k}\,{\simeq}\,\R^{4n+2}$,  
the Grassmannian $\Gr^{\R}_{4n}(\sfY_{k})$ 
has 
real 
dimension
$8n$ and contains the grassmannian 
$\Gr_{2n}(\sfY_{k})$ of its \textit{complex} $2n$-dimensional linear subspaces 
as a submanifold of real dimension $4n.$\par 
We claim that $\gamma$ is transverse to 
$\Gr_{2n}(\sfY_{k})$ 
at the point $\{u\,{=}\,0\}$ if $k\,{\neq}\,1.$
In these cases, transversality implies that the CR singularity
cannot be removed by small deformations of $\sfM_k.$ 

To check the claim, we use Garrity's transversality criterion
\cite{Gar}, (see also \cite{Coff}).
It relies on the fact that every $2$-codimensional submanifold 
tangent to the $(z,w)$-space at $0$ can be represented as
\[
u = (\bar{z}^\intercal \;\bar{w}^{\intercal})\, R 
\left(\!\!\begin{array}{c}  z \\ w \end{array}\!\!\right)
+\re\left((z^{\intercal}\; w^{\intercal})\, P 
\left(\!\!\begin{array}{c}  z \\ w \end{array}\!\!\right)\right)
 +O(|(z,w)|^3),
\]
with complex $2n{\times} 2n$ matrices $R$ and $P.$
In general, this may require a further quadratic coordinate change,
but in our simpler case we directly get 
$R=
\left(\!\!\begin{array}{cc}  I_{2n} & ikI_{2n} \\ ikI_{2n} & -I_{2n} \end{array}\!\!\right),$
$P=0,$ with zero error term.
By Garrity's criterion, transversality at $0$ is
equivalent to the fact that
$R$ has nonzero determinant 
and one easily verifies that this happens when $k\not=1.$
We will come back to CR singularities in \S{\ref{S7}}.
\end{exam}

\section{Locally approximable generalized $CR$ functions}
\subsection{Closure operations on sheaves of continuous functions} \label{sub1-1}
Let $\sfA$ be a first-countable Hausdorff space. 
Denote by $\cC_{\!\sfA}$
 the sheaf of germs of continuous
complex valued functions on $\sfA,$  by $\Cg_{\!\sfA}$ 
its total space, spread over $\sfA,$ 
and let $\cH$ be any subset of $\Cg_{\!\sfA}$.
For $U^{\text{open}}\,{\subseteq}\,\sfA$
we use the notation $\cH(U)$ for the set of sections $f{\in}\cC(U)$ such that, 
for each $x\in{U}$, the germ $f_{(x)}$ belongs to $\cH$.  
We define the \emph{local uniform extension} $\cH^c$ 
of 
$\cH$ 
to be the set 
of all germs $f_{(x)}$ having, for some neighbourhood  $U_x$ of $x$ in $\sfA,$ 
a representative $f{\in}\cC_{\sfA}(U_x),$ which is on $U_x$ 
the uniform
limit 
of a sequence $\{f_{\nuup}\}\,{\subset}\,\cH(U_{x})$
on compact subsets of $U_{x}$.
The correspondence $\cH\to\cH^c$
is a \emph{closure operation} in the sense of \cite{Ric}: this means that the following are satisfied: 
\begin{align}
 \tag{$i$} & \emptyset^c=\emptyset,\\
 \tag{$ii$} & \cH\subseteq\cH^c,\;\;\forall \cH\subseteq\Cg_{\!\sfA};\\
 \tag{$iii$} & \cH_1\subseteq\cH_2\subseteq\Cg_{\!\sfA}
 \Longrightarrow \cH_1^c\subseteq\cH_2^c.
\end{align}\par
When $U\,{\to}\,\cH(U)$ is a sheaf, 
its uniform extension $U\,{\to}\,\cH^c(U)$ is also a sheaf.
\par
We say that $\cH$ is \emph{locally uniformly closed} if $\cH\,{=}\,\cH^c$.\par
The \emph{local uniform closure} $\cH^{(c)}$
of $\cH$  is the smallest
locally uniformly closed subset of $\Cg_{\sfA}$ containing $\cH$.
In fact 
$\cH^{(c)}$ may be larger than $\cH^c$
(see Example~\ref{ex1-5} below).
The following is 
a basic property of this construction, (see 
\cite[Prop.8.1]{Ric}):
By employing transfinite induction, 
one can uniquely associate 
to every subsheaf $\cH$ of $\cC_{\sfA}$ an ordinal $\mu$ and
a \textit{resolution} $\{\cH_\nu\,{\mid}\,\nu\,{\preceq}\,\mu\}$ 
consisting of
a well ordered family of sheaves,
such that 
\begin{align}
\tag{$i$} & \cH_0=\cH, \;\;\cH_\mu=\cH^{(c)}, \\
\tag{$ii$} & \cH_\alpha\subsetneqq \cH_\beta
\;\;\text{if $0\preceq\alpha\prec\beta\preceq\mu$,}\\
\tag{$iii$} &\cH_\nu
=\left({\bigcup}_{\alpha\prec\nu} \cH_\alpha\right)^c 
\;\;\text{for $\nu\preceq\mu$.}
\end{align} \par
It is convenient to define $\cH_{\alpha}$ also for 
larger ordinals,
by setting $\cH_{\alpha}\,{=}\,\cH_{\mu}$ for~$\alpha\,{\succ}\,\mu.$ \par 
If $\sfA$ is a $\sigmaup$-locally compact\footnote{A topological space is said to be 
$\sigmaup$-compact if it is the union of countably many compact subspaces
and $\sigmaup$-locally compact if it is both $\sigmaup$-compact and locally compact.}
Hausdorff
space, 
then for each open $U$ in $\sfA$ the space
$\cC_{\!\sfA}(U)$ of continuous complex valued functions on $U$
is Fr\'echet for the topology of uniform convergence on
compact subsets of $U.$ 
We recall that a sheaf $\cS$
of vector subspaces
 of $\cC_{\!\sfA}$  
 is \textit{Fr\'echet} if 
we can find a basis $\cU\,{=}\,\{U_{i}\}$ 
of open sets in $\sfA$ such that $\cS(U_i)$ is a
Fr\'echet subspace of $\cC_{\!\sfA}(U_{i})$ for all
indices $i.$ 
\begin{lem} If $\sfA$ is a $\sigmaup$-locally compact 
Hausdorff
space, then the uniform closure $\cH^{(c)}$ of 
any  
subsheaf of vector spaces of 
$\cC_{\!\sfA}$  is a Fr\'echet sheaf.\qed
\end{lem}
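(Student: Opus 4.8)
The plan is to read off the Fr\'echet property from the single fact that $\cH^{(c)}$ is, by construction, locally uniformly closed, turning this germ-wise condition into topological closedness of the spaces of sections. First I would verify, by transfinite induction along the resolution $\{\cH_\nu\mid\nu\preceq\mu\}$, that every $\cH_\nu$ — and hence $\cH^{(c)}=\cH_\mu$ — is again a sheaf of vector spaces. The base case is the hypothesis $\cH_0=\cH$. For the inductive step one uses $\cH_\nu=\bigl(\bigcup_{\alpha\prec\nu}\cH_\alpha\bigr)^c$ together with two elementary remarks: since the $\cH_\alpha$ form an increasing chain, their union is, stalk by stalk, an increasing union of vector subspaces and so is a sheaf of vector spaces; and the operation $\cK\mapsto\cK^c$ preserves this property, because if $f_{(x)},g_{(x)}\in\cK^c$ and $a,b\in\C$, a common neighbourhood of $x$ carries representatives that are limits, uniform on compacta, of sequences $\{f_{\nuup}\},\{g_{\nuup}\}\subset\cK$, so $af+bg$ is the uniform limit on compacta of $\{af_{\nuup}+bg_{\nuup}\}\subset\cK$ and $(af+bg)_{(x)}\in\cK^c$.

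Next I would take $\cU$ to be the basis of all open subsets of $\sfA$ and show that each $\cH^{(c)}(U)$ is closed in $\cC_{\!\sfA}(U)$. Suppose $\{f_{\nuup}\}\subset\cH^{(c)}(U)$ converges to $f$ uniformly on the compact subsets of $U$. For every $x\in U$ the neighbourhood $U_x=U$ with the representative $f$ exhibits $f_{(x)}$ as a germ of the local uniform extension $(\cH^{(c)})^c$; since $\cH^{(c)}$ is locally uniformly closed this set equals $\cH^{(c)}$, whence $f_{(x)}\in\cH^{(c)}$. As $x\in U$ was arbitrary, $f\in\cH^{(c)}(U)$, so $\cH^{(c)}(U)$ is sequentially closed. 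Because $\sfA$ is $\sigmaup$-locally compact, $\cC_{\!\sfA}(U)$ is Fr\'echet, in particular metrizable, and a sequentially closed linear subspace of a metrizable topological vector space is closed; hence $\cH^{(c)}(U)$ is a closed, and therefore Fr\'echet, subspace of $\cC_{\!\sfA}(U)$. This is precisely the assertion that $\cH^{(c)}$ is a Fr\'echet sheaf.

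The only genuinely substantive point is this last bridge between the sheaf-theoretic definition of local uniform closure, which is a condition on germs, and the functional-analytic notion of a closed subspace of sections. It works exactly because the $\sigmaup$-local compactness of $\sfA$ makes every $\cC_{\!\sfA}(U)$ metrizable, so that the sequential closedness granted for free by the definition of $\cH^{(c)}$ upgrades to honest topological closedness; all the remaining steps are bookkeeping along the transfinite resolution.
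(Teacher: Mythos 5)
Your argument is correct and supplies exactly the routine verification that the paper omits (the lemma is stated with a tombstone and no proof). The two points you isolate are the right ones: the transfinite induction along the resolution showing that each $\cH_{\nu}$, hence $\cH^{(c)}$, is again a subsheaf of vector spaces, and the observation that local uniform closedness of $\cH^{(c)}$ makes each section space $\cH^{(c)}(U)$ sequentially closed, hence --- by the metrizability of $\cC_{\!\sfA}(U)$ granted by $\sigmaup$-local compactness --- a closed, and therefore Fr\'echet, subspace of $\cC_{\!\sfA}(U)$.
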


\subsection{Locally approximable $CR$ functions}
Let $\sfX$ be a complex manifold.
Denote by
$\cO_{\sfX}$ the sheaf of germs of holomorphic functions on $\sfX,$ by $\Og_\sfX$ its total space and
by $\piup_{\sfX}:\Og_{\sfX}\,{\to}\,\sfX$ its spreading over $\sfX.$
We apply the notions of 
local uniform extension and closure 
introduced in \S\ref{sub1-1} 
to define continuous $CR$ functions on rather general subsets of 
$\sfX.$ 
\par \smallskip
Let $\sfA$ be a  locally closed subset 
of $\sfX$ 
and 
$\cO_{\sfX}|_{\sfA}$ the  induced sheaf
on $\sfA$ by the inclusion map $\sfA\,{\hookrightarrow}\,\sfX$
(cf. \cite{gode}). By evaluating at points of $\sfA$, we obtain a
morphism of $\cO_{\sfX}|_{\sfA}$ into a subsheaf 
of vector spaces of $\cC_{\sfA}$. 
\begin{defn}\label{defnstcr}
We define the sheaf $\tiO_{\sfA}$ 
of germs of 
\emph{locally approximable continuous $CR$ functions}
(in the sequel, for short, $LACR$) 
to be
the local uniform closure 
$\cO_{\sfA}^{(c)}$ of $\cO_{\sfA}$.
\end{defn}
We have (see e.g. \cite{Andreotti1973}) 
\begin{lem} \label{l-2.2}
For any  locally closed $\sfA\subseteq\sfX,$ 
$\tiO_{\sfA}$ is a Fr\'echet sheaf. \qed
\end{lem}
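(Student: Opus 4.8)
The plan is to deduce the statement directly from the general Lemma proved at the end of \S\ref{sub1-1}, which asserts that for a $\sigmaup$-locally compact Hausdorff space $\sfA$ the local uniform closure $\cH^{(c)}$ of any subsheaf of vector spaces of $\cC_{\!\sfA}$ is a Fr\'echet sheaf. By Definition~\ref{defnstcr} we have $\tiO_{\sfA}\,{=}\,\cO_{\sfA}^{(c)}$, so it suffices to check two things: that $\cO_{\sfA}$ is a subsheaf of \emph{vector spaces} of $\cC_{\!\sfA}$, and that the underlying space $\sfA$ is $\sigmaup$-locally compact and Hausdorff.

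For the first point, recall that $\cO_{\sfA}$ is by construction the image of the evaluation morphism $\cO_{\sfX}|_{\sfA}\,{\to}\,\cC_{\!\sfA}$. Since the stalks of $\cO_{\sfX}|_{\sfA}$ are $\C$-vector spaces and both restriction to $\sfA$ and evaluation at points of $\sfA$ are $\C$-linear sheaf morphisms, the image $\cO_{\sfA}$ is a subsheaf of complex vector spaces of $\cC_{\!\sfA}$, exactly as the general Lemma requires.

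For the second point I would argue by pure point-set topology. A complex manifold $\sfX$ is Hausdorff, locally compact and second countable; hence every subspace is second countable and Hausdorff, and a \emph{locally closed} subspace is moreover locally compact (being open in a closed, hence locally compact, subspace). As a second countable locally compact Hausdorff space is $\sigmaup$-compact, $\sfA$ is $\sigmaup$-locally compact, and the general Lemma applies verbatim, giving that $\tiO_{\sfA}\,{=}\,\cO_{\sfA}^{(c)}$ is a Fr\'echet sheaf. Should one prefer not to assume $\sfX$ second countable, the Fr\'echet property is by definition local, so it is enough to run the same argument on the pieces $\sfA\,{\cap}\,\omega$, with $\omega$ ranging over coordinate charts biholomorphic to open subsets of $\C^n$: each such piece is locally closed in an open subset of $\C^n$, hence $\sigmaup$-locally compact, and $(\,\cdot\,)^{(c)}$ is computed stalkwise and therefore commutes with restriction to $\omega$, so the resulting Fr\'echet sections on a basis of each $\sfA\,{\cap}\,\omega$ assemble to a basis of $\sfA$.

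Accordingly, the present statement carries essentially no independent difficulty: all the analytic content has been packaged into the general Lemma, whose core is that each step of the transfinite resolution $\{\cH_{\nu}\}$ of $\cH\,{=}\,\cO_{\sfA}$ preserves the Fr\'echet property. The only genuinely substantive point lies in the passage through the uniform extension $(\,\cdot\,)^{c}$: on a $\sigmaup$-locally compact $\sfA$ each $\cC_{\!\sfA}(U_i)$ is Fr\'echet for uniform convergence on compacta (via a countable exhaustion by compact sets), and a locally uniform limit on compacta of elements of a Fr\'echet subspace again forms a \emph{closed}, hence Fr\'echet, subspace; this is what forces $\left(\bigcup_{\alpha\prec\nu}\cH_{\alpha}\right)^{c}$ to remain Fr\'echet at every ordinal, in particular at every limit ordinal. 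Thus the expected main obstacle is not in the reduction carried out here but in that preservation statement, which we are entitled to assume.
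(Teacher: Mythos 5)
Your proposal is correct and follows exactly the route the paper intends: Lemma~\ref{l-2.2} is stated with only a citation and a \qed, the implicit argument being precisely the reduction to the general Lemma of \S\ref{sub1-1} on $\sigmaup$-locally compact Hausdorff spaces, for which you supply the routine verifications ($\cO_{\sfA}$ is a subsheaf of vector spaces, and a locally closed subset of a complex manifold is $\sigmaup$-locally compact). No discrepancy with the paper's approach.
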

We will indicate by 
$\cO_{\sfA,\alphaup}$ the terms in the resolution
of \S\ref{sub1-1} with first term $\cO_{\sfA,0}\,{=}\,\cO_{\sfA}$ and use {\textquotedblleft{$\, |_{\sfA}$
\textquotedblright} to signify 
{\textquotedblleft{restriction to $\sfA$}\textquotedblright},
e.g.  of a continuous function defined on a 
subset $E$ of the ambient space $\sfX$
to the intersection~$E\,{\cap}\,\sfA$. 
\par\smallskip 
LACR functions are well behaved with respect to restrictions.
\begin{lem}\label{l-2.3}
If $\sfA,\sfB$ are locally closed subsets of 
 a reduced complex space $\sfX,$ with $\sfA\,{\subseteq}\,\sfB$,
 then $\left.(\cO_{\sfB})_{\alphaup}\right|_{\sfA}\,{\subseteq}\,(\cO_{\sfA})_{\alphaup}$
 for all ordinals $\alphaup$. \par
 In particular, restriction of functions yields a morphism
 $\rtt^{\sfB}_{\sfA}:\tiO_{\sfB}\,{\to}\,\tiO_{\sfA}$, which 
is 
continuous 
with respect to the natural
 Fr\'echet structures.
\end{lem}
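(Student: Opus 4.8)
The plan is to prove the inclusion $(\cO_{\sfB})_{\alphaup}|_{\sfA}\subseteq(\cO_{\sfA})_{\alphaup}$ by transfinite induction on $\alphaup$, and then to obtain the statement about $\rtt^{\sfB}_{\sfA}$ by specializing to $\alphaup\,{=}\,\mu$. For the base case $\alphaup\,{=}\,0$ I would argue directly from the definitions: a germ of $\cO_{\sfB}$ at a point $x\,{\in}\,\sfA$ is, by the very meaning of the induced sheaf $\cO_{\sfX}|_{\sfB}$, represented by the restriction to $\sfB$ of a section of $\cO_{\sfX}$ defined on an open neighbourhood $\omegaup$ of $x$ in $\sfX$; restricting this representative further to $\sfA\,{\cap}\,\omegaup$ produces a germ belonging to $\cO_{\sfA}\,{=}\,\cO_{\sfX}|_{\sfA}$. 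Hence $\cO_{\sfB}|_{\sfA}\subseteq\cO_{\sfA}$.

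For the inductive step I fix an ordinal $\alphaup$ and assume $(\cO_{\sfB})_{\betaup}|_{\sfA}\subseteq(\cO_{\sfA})_{\betaup}$ for every $\betaup\,{\prec}\,\alphaup$. Using the recursive defining property of the resolution (\S\ref{sub1-1}), which treats successor and limit ordinals simultaneously, I write $(\cO_{\sfB})_{\alphaup}=\bigl(\bigcup_{\betaup\prec\alphaup}(\cO_{\sfB})_{\betaup}\bigr)^{c}$. Let $g$ be a germ of $(\cO_{\sfB})_{\alphaup}$ at $x\,{\in}\,\sfA$. By the definition of the local uniform extension there are an open neighbourhood $U_{x}$ of $x$ in $\sfB$, a representative $f\,{\in}\,\cC_{\sfB}(U_{x})$ of $g$, and a sequence $\{f_{\nuup}\}\subset\bigl(\bigcup_{\betaup\prec\alphaup}(\cO_{\sfB})_{\betaup}\bigr)(U_{x})$ converging to $f$ uniformly on the compact subsets of $U_{x}$.

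Next I would restrict the whole configuration to $\sfA$. Put $V_{x}\,{=}\,U_{x}\,{\cap}\,\sfA$, an open neighbourhood of $x$ in $\sfA$, and consider $f|_{V_{x}}$ together with $\{f_{\nuup}|_{V_{x}}\}$. Two observations make the argument close. First, every compact $K\subseteq V_{x}$ is a compact subset of $U_{x}$, so the uniform convergence $f_{\nuup}\,{\to}\,f$ on $K$ is inherited, giving $f_{\nuup}|_{V_{x}}\,{\to}\,f|_{V_{x}}$ uniformly on the compact subsets of $V_{x}$. Second, for each $y\,{\in}\,V_{x}$ the germ $(f_{\nuup})_{(y)}$ lies in some $(\cO_{\sfB})_{\betaup}$ with $\betaup\,{\prec}\,\alphaup$, hence by the inductive hypothesis its restriction $(f_{\nuup}|_{\sfA})_{(y)}$ lies in $(\cO_{\sfA})_{\betaup}\subseteq\bigcup_{\betaup\prec\alphaup}(\cO_{\sfA})_{\betaup}$; thus $f_{\nuup}|_{V_{x}}\in\bigl(\bigcup_{\betaup\prec\alphaup}(\cO_{\sfA})_{\betaup}\bigr)(V_{x})$. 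Consequently $g|_{\sfA}\,{=}\,(f|_{\sfA})_{(x)}$ belongs to $\bigl(\bigcup_{\betaup\prec\alphaup}(\cO_{\sfA})_{\betaup}\bigr)^{c}=(\cO_{\sfA})_{\alphaup}$, completing the induction. Taking $\alphaup\,{=}\,\mu$ yields $\tiO_{\sfB}|_{\sfA}\subseteq\tiO_{\sfA}$, so restriction of functions is well defined and furnishes the morphism $\rtt^{\sfB}_{\sfA}$.

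Finally, continuity of $\rtt^{\sfB}_{\sfA}$ for the Fr\'echet structures of Lemma~\ref{l-2.2} would follow from the continuity of plain restriction of continuous functions: for open $U\subseteq\sfB$ with $V\,{=}\,U\,{\cap}\,\sfA$, each seminorm $\sup_{K}|\cdot|$ on $\cC_{\sfA}(V)$ attached to a compact $K\subseteq V$ is, since $K$ is compact in $U$, dominated by the corresponding seminorm on $\cC_{\sfB}(U)$, so $\cC_{\sfB}(U)\,{\to}\,\cC_{\sfA}(V)$ is continuous and restricts to a continuous map of the Fr\'echet section spaces. I do not expect a real obstacle here; the only points demanding care are to phrase the inductive step through the single relation $(\cO_{\sfB})_{\alphaup}=\bigl(\bigcup_{\betaup\prec\alphaup}(\cO_{\sfB})_{\betaup}\bigr)^{c}$ so that successor and limit ordinals are handled uniformly, and to apply the inductive hypothesis to the \emph{approximating} germs $(f_{\nuup})_{(y)}$ at every point, not merely to the limit, which is precisely what licenses the passage to $\bigcup_{\betaup\prec\alphaup}(\cO_{\sfA})_{\betaup}$.
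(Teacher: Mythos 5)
Your proposal is correct and follows essentially the same route as the paper: transfinite induction, with the base case read off from the definition of the induced sheaf, the inductive step obtained by restricting the approximating sequence to $\sfA$ and noting that uniform convergence on compact subsets is inherited, and continuity deduced from the Fr\'echet subsheaf structure. The only cosmetic difference is that you keep the approximants as sections of $\bigcup_{\betaup\prec\alphaup}(\cO_{\sfB})_{\betaup}$ while the paper assigns each $f_{j}$ a single ordinal $\betaup_{j}\prec\alphaup$; the two formulations are interchangeable here.
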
 
\begin{proof} We prove the statement by transfinite induction. 
It is clear in fact that all germs of $\cO_{\sfA}$ at a point $\pct$ of $\sfA$ are
restrictions of germs of ${\cO_{\sfB}}_{(\pct)}$.
\par
Let $\alphaup{\succ}0$ and assume that 
$\left.\cO_{\sfB,\betaup}\right|_{\sfA}\,{\subseteq}\, \cO_{\sfA,\betaup}$ for $\betaup{\prec}\alphaup.$ 
 If $f\,{\in}\,\cO_{\sfB,\alphaup}(U\,{\cap}\,\sfB)$ for an $U$ open in $\sfX,$ then for each
 point $\pct$ of $U\,{\cap}\,\sfA$ we can find an open neighbourhood $U'$ of $\pct$ in $\sfX$
 and sequences $\{\betaup_j\}$ of ordinals smaller than ${\alphaup}$,  
 $\{f_{j}\}$ with $f_{j}\,{\in}\,\cO_{\sfB,\betaup_{j}}(U'{\cap}\sfB)$, 
such that  
$f_{j}{\to}f$ uniformly on compact subsets of $U'{\cap}\sfB.$
 By the inductive assumption, $\left.f_{j}\right|_{U'\cap\sfA}\,{\in}\, \cO_{\sfA,\betaup_{j}}(U'{\cap}\sfA)$
 and, since $\left.f_{j}\right|_{U'\cap\sfA}$ converges to $\left.f\right|_{U'\cap\sfA}$ uniformly on compact
 subsets of $U'{\cap}\sfA,$ this means that 
 $\left. f\right|_{U'\cap\sfA}\,{\in}\,\cO_{\sfA,\alphaup}(U'{\cap}\sfA).$
 This is true for all points $\pct$ of $\sfA$, 
 proving that \hfill\linebreak
 $\left.\cO_{\sfB,\alphaup}\right|_{\sfA}\,{\subseteq}\,\cO_{\sfA,\alphaup}.$ 
 Since this is true for all ordinals $\alphaup,$ it follows that
 $\left.\tiO_{\sfB}\right|_{\sfA}\,{\subseteq}\,\tiO_{\sfA}.$
 Since $\tiO_{\sfB}$ and $\tiO_{\sfA}$ are Fr\'echet subsheaves of $\cC_{\sfB}$ and $\cC_{\sfA},$
 the restriction map is still continuous for their Fr\'echet structures.
\end{proof}
\begin{exam}\label{ex2.4}
If $\sfA$ is a reduced complex analytic subvariety 
of a complex manifold $\sfX,$ then $\cO_{\sfA}$ is a
Fr\'echet sheaf (see e.g. \cite[Thm.11.2.5]{Taylor2002})
and hence 
$\tiO_{\sfA}{=}\cO_{\sfA}$.
\end{exam}

\begin{exam}
If $\sfA$ is a 
smooth $CR$ submanifold of a complex manifold $\sfX$, 
the Baouendi-Treves approximation theorem 
\cite{ba-tr} implies that $LACR$ functions 
are the same as usual continuous $CR$ functions 
and $\tiO_{\sfA}=\cO_{\sfA,1}.$ 
\end{exam}
The smallest ordinal $\muup$ for which $\tiO_{\sfA}\,{=}\,\cO_{\sfA,\muup}$ 
may be larger than $1$.

\begin{exam}
\label{ex1-5}
Let $\sfA$ be a smooth real surface properly embedded in a $2$-di\-men\-sional 
complex manifold $\sfX.$
The \emph{complex} points of $\sfA$ are those $\pct$ for which $\mathrm{T}_{\!\pct}\sfA$ is a complex
line. In suitable holomorphic coordinates centered at its 
isolated 
complex point $\pct$
the real surface 
$\sfA$ can be represented 
by
\begin{equation} \label{eq-2.1}
z_2=|z_1|^2+\gammaup(z_1^2+\bar{z}_1^2)+O(|z_1|^3)=(1+2\gammaup)x_1^2+(1-2\gammaup)y_1^2+
O(|z_1|^3),
\end{equation}
for a real $\gammaup\,{\geq}\,0$, which is a local biholomorphic invariant.
Following \cite{B65}, $\pct$ is called an \textit{elliptic}, \textit{parabolic} or 
\textit{hyperbolic} point 
if $\gammaup{<}1/2$, $\gammaup{=}1/2$ or $\gammaup{>}1/2$, respectively. 
Elliptic and hyperbolic complex points are isolated 
and stable with respect to small deformations of $\sfA.$
\par
Let us assume 
next 
that $\sfA$ is \textit{in generic position}, 
meaning that its complex points are isolated and either elliptic or hyperbolic. 
It is known that on~$\sfA$
each totally real point  
has a 
fundamental system of open neighbourhoods on which
every continuous function can be 
uniformly approximated
by restrictions of holomorphic functions. 
By \cite{FS91} the same holds at hyperbolic complex points.
This implies $\tiO_{\sfA}\,{=}\,(\cO_{\sfA})_1=\cC_{\!\sfA}$ if $\sfA$ has only hyperbolic complex points.
\par 
To examine the local geometry at an elliptic complex point $\pct,$
we can assume that $\sfX$ is a ball around 
$\pct{=}0$ in $\C^2.$
In \cite{B65} Bishop shows that 
there is $s_{0}{>}0$ and 
a family of complex discs
$\Delta_s,$ $0{\leq} s{\leq} s_0,$ attached to $\sfA$ at $0$.
More precisely, $\Delta_s:\bar{\D}\rightarrow\C^2$ is continuous
on the closed unit disc $\bar{\D},$  holomorphic in its interior 
and maps the unit circle into $\sfA.$ 
Moreover, it may be arranged that $\Delta_s$ is continuous 
on $\bar{\D}\times[0,s_0]$, injective on $\bar{\D}\times(0,s_0]$,
while $\Delta_0\equiv 0.$
\par 
If $f$ is the uniform limit on a neighbourhood $V$ of $0$ in $\sfA$
of a sequence of functions that are holomorphic on some open neighbourhood 
of $V$ in $\C^{2},$ 
then
the maximum modulus principle implies that, 
for $s$ sufficiently small, 
$f\circ \Delta_s(\zeta),$ $|\zeta|=1,$ has a continuous extension
which is holomorphic on $\D.$ 
Thus $(\cO_{\sfA})_1$ is a proper subsheaf of $\cC_M.$
On the other hand, if we fix a continuous function $\chiup$ on $\C^{2},$ which equals
$0$ for $|z|{<}(1/2)$ and  $1$ for $|z|\,{>}\,1,$ then 
for a continuous $g\,{\in}\,\cC_M(V)$ and any positive integer $\nuup,$ 
the function
$g_{\nuup}(z)\,{=}\,g(0){+}\chiup(\nuup{\cdot}z)(g(z){-}g(0))$ belongs to
$(\cO_{\sfA})_{1}(V),$ being constant on a neighbourhood of $0,$ 
and the sequence $\{g_{\nuup}\}$ converges uniformly to $g$ on $V$. 
Therefore  $(\cO_{\sfA})_{0}\,{\subsetneqq}\,(\cO_{\sfA})_{1}
\,{\subsetneqq}\,(\cO_{\sfA})_{2}\,{=}\,\cC_{\sfA}$ 
when $\sfA$ has elliptic complex points.
 \par
Finally we observe that $\sfA$ is set-theoretically $1$-pseudoconvex at every $\pct\,{\in}\,\sfA.$
This is well-known for $\pct$ totally real. Otherwise we rewrite
\eqref{eq-2.1} as 
\[
z_2=|z_1|^2+\gammaup(z_1^2+\bar{z}_1^2)+g(z_1,\bar{z}_1)+ih(z_1,\bar{z}_1)
\]
with real-valued third-order terms $g$ and $h.$
The hypersurface
$$\{x_2=|z_1|^2+\gammaup(z_1^{2}{+}\bar{z}_1^{2})+g(z_1,\bar{z}_1)\}$$
(which contains $\sfA$) is strictly pseudoconvex at $0.$
\end{exam}
\begin{exam}
\label{ex2-7} When $k$ is a positive integer, 
\begin{equation*}
\sfA= \sfA_k=\{z\in\C^3 \mid x_3=(|z_1|^2+|z_2|^2)^{k},y_3=0\}
\end{equation*}
is a $\Ci$-smooth real $4$-dimensional submanifold of $\C^{3},$ 
having an isolated $CR$ singularity at $0$ 
and is a generic $CR$ manifold elsewhere. 
We consider on $\sfA$ a \textit{gap sheaf} 
$\tiO_{\sfA,[0]}$, defined by
\begin{equation*}
{\tiO}_{\sfA,[0]}(U)=\tiO_{\sfA}(U\backslash\{0\})\cap\cC_{\!\sfA}(U).
\end{equation*}
Set
$U_\epsilon\,{=}\,\sfA\cap\{x_3\,{<}\,\epsilon\},$ $H\,{=}\,\{y_3\,{=}\,0\},$ 
$H_\epsilon\,{=}\,\{(|z_1|^2+|z_2|^2)^{k}\,{<}\,x_3\,{<}\,\epsilon,\,y_3\,{=}\,0\}$. 
Slice-wise applying the Hartogs-Bochner theorem yields unique extension from
$\tiO_{\sfA,[0]}(U_\epsilon)$ to $\tiO_H(H_\epsilon)\cap\cC(H_\epsilon\cup U_\epsilon).$
Since ${\tiO}_H\,{=}\,(\cO_H)_1$ and since restrictions of $(\cO_H)_1$ functions are of class $(\cO_{\!\sfA})_1,$  
approximation of $LACR$ functions on $\sfA$ by the values of their extensions on
$\{x_3{=}(|z_1|^2{+}|z_2|^2)^{k}{+}\eta,\,y_3{=}0\}$, where $\eta\,{\downarrow}\, 0,$ 
shows that ${\tiO}_{\!\sfA}=(\cO_{\!\sfA})_2=\tiO_{\sfA,[0]}.$  
\par
For $k\,{=}\,2,$ higher 
regularity is not preserved by CR extension.
As observed in \cite{LNR} the restriction $\sqrt{x_3}\,|_{\sfA}$ is of class $\cC^\infty\!,$
whereas its extension is only H\"older continuous at $0.$ 
\end{exam}
It may be interesting to study 
in general 
when $\tiO_{\sfA,[\pct]}$ equals ${\tiO}_{\!\sfA}.$
In  \S\ref{sect6}
we will verify this for a class of manifolds generalizing Ex.\ref{ex2-7}.
One may wonder
whether set-theoretical weak $1$-pseu\-do\-con\-cavity is enough, 
which would amount to a generalization 
of the Riemann removability theorem. 
However the following simple example shows that at least 
some {\textquotedblleft{normality condition}\textquotedblright}\, 
has to be added.
\begin{exam}
 Let us consider the complex analytic subvariety 
\begin{equation*}
 \sfA=\{z^2=w^3\}\subset\C^{2},
\end{equation*}
which is set-theoretically weakly $1$-pseudoconcave by Corollary \ref{cor1.4}.
In this case the gap sheaf $\tiO_{\sfA,[0]}$ is the normalization of the sheaf $\cO_{\sfA}$,
which equals $\tiO_{\sfA}$ (see Ex.\ref{ex2.4}). The two are different because $z/w$ 
(extended by $0$ at $0$)
defines
a global section of $\tiO_{\sfA,[0]}$, whose germ at $0$ does not belong to~$\cO_{\sfA}$.
\end{exam}
\begin{rmk} Let $\sfA$ be a closed subset of a smooth manifold $\sfX$. 
Let $\cE_{X}$ be the sheaf of germs of smooth complex valued functions on $\sfX$
and 
$\Zz_{\sfX,\sfA}$ its subsheaf consisting of germs which 
vanish to infinite order 
on $\sfA$.
The sheaf $\cW_{\!\sfX,\sfA}$ 
of germs of 
Whitney functions (of infinite order)
on $\sfA$  is the quotient sheaf 
defined by the exact sequence 
\begin{equation*} 
\begin{CD}
 0 @>>> \Zz_{{\sfX,\sfA}}@>>>\cE_{\sfX}@>>>\cW_{\!\sfX,\sfA}@>>>0.
\end{CD}
\end{equation*}
Note that, if $\pct_{0}\,{\in}\,\sfX{\setminus}\sfA,$ then $\cW_{\sfX,\sfA,\,\pct_{0}}\,{=}\,\{0\}$.
In general, if we fix any smooth local chart on an open $U\,{\subset}\,\sfX$, then
a section $f$ of $\cW_{\sfX,\sfA}$ on $U$ uniquely determines the Taylor series of its representative
in $\cE_{\sfX}(U)$ at points of $\sfA\,{\cap}\,U$. The coefficients of these Taylor series need to
satisfy extra conditions: e.g. 
when $\sfA$ is a smooth real submanifold of codimension $k$ in $\sfX$, $\pct_{0}\,{\in}\,\sfA,$
and, for an open neighbourhood $U$ of $\pct_{0}$ in $\sfX,$  
\begin{equation*}
 \sfA\cap{U}=\{\pct\in{U}\mid \rhoup_{1}(\pct)=0,\;\hdots,\,\rhoup_{k}(\pct)\,{=}\,0\},
\end{equation*}
with 
\begin{equation*}
 d\rhoup_{1}(\pct)\wedge\cdots\wedge{d}\rhoup_{k}(\pct)\neq{0}\,\;\;\forall\pct\in\sfA\cap{U},
\end{equation*}
then germs of $\cW_{\sfX,\sfA}$ at $\pct_{0}$ are represented by  
formal power series 
\begin{equation*}\tag{$*$} f=
 {\sum}_{i_{1},\hdots,i_{k}=0}^{\infty} f_{i_{1},\hdots,i_{k}}\rhoup_{1}^{i_{1}}\cdots\rhoup_{k}^{i_{k}}
\end{equation*}
in the transversal variables $\rhoup_{1},\hdots,\rhoup_{k},$ whose
coefficients $f_{i_{1},\hdots,i_{k}}$ are smooth functions defined on a common
relatively open neighbourhood $\omegaup$ of $\pct_{0}$ in~$\sfA$.
We note that from $(*)$ we can compute the Taylor series of $f$ at $\pct_{0}$ for any choice
of smooth coordinates on $\sfX$ about $\pct_{0}$. \par
When the ambient $\sfX$ is a complex manifold
it is natural (cf. e.g. \cite{N85}) to define germs of \emph{Whitney $CR$ functions} 
(WCR for short) on $\sfA$ 
by the requirement
that their Taylor series in holomorphic coordinates are formal series of holomorphic polynomials.
When $\sfA$ is a smooth generic $CR$ submanifold of $\sfX$, the notion of  
WCR
function coincides, by the equivalence of {\textquotedblleft{\textit{generic}}\textquotedblright} 
and  {\textquotedblleft{\textit{formally noncharacteristic for $\bar{\partial}\,$}}\textquotedblright} 
(see e.g.  \cite{AH1972a}),
with that of smooth $CR$ function.\par
The last observation in Ex.\ref{ex2-7} shows that instead, when a smooth $\sfA$ carries
$CR$ singularities,
smooth LACR functions on $\sfA$ may not 
be WCR
(see also \cite[Thm.2.4]{harris1979}). 
\end{rmk}
\par\medskip

\section{Maximum modulus principle $\mathrm{I}$}\label{sec-max}
Example \ref{ex1-5} shows that  
the sheaf of $LACR$
functions may 
coincide with the sheaf of germs of 
complex valued continuous functions even in cases where
the objects we consider  are not totally real. 
Our purpose is to show that Conditions \eqref{eq-1.2}
in Def.\ref{defn1.1a}
are appropriate to characterize classes of locally
 closed subsets
of reduced analytic spaces whose $LACR$ functions 
have special properties. We begin by investigating
relationships between 
weak pseudoconcavity and  the validity of  
\emph{local}
maximum modulus principles. 
\begin{prop}\label{p3.1}
Let $\sfA$ be a locally closed subset of 
a reduced complex space $\sfX.$ 
The following are equivalent: 
\begin{enumerate}
 \item $\sfA$ is 
set-theoretically weakly one-pseudoconcave;
\item 
for every $\pct\,{\in}\,\sfA$ there is an open
neighbourhood $U_\pct$ of $\pct$ in $\sfA$ such that 
\begin{equation}\label{eq-3-1a}
\forall V^\text{open}\Subset{U}_\pct ,\;\;
\forall u\in\cO_{\sfA}(V)\cap\cC^0(\bar{V}),\;\;
{\sup}_V|u|\leq{\sup}_{\partial_{\sfA}{V}}|u|.
\end{equation}
\item 
there is an ordinal $\alphaup\,{\succ}\,0$ such that
for every $\pct\,{\in}\,\sfA$ there is an open
neighbourhood $U_\pct$ of $\pct$ in $\sfA$ such that 
\begin{equation}\label{eq-3-1b}
\forall V^\text{open}\Subset{U}_\pct ,\;\;
\forall u\in\cO_{\sfA,\alphaup}(V)\cap\cC^0(\bar{V}),\;\;
{\sup}_V|u|\leq{\sup}_{\partial_{\sfA}{V}}|u|.
\end{equation}
\item 
for every $\pct\,{\in}\,\sfA$ there is an open
neighbourhood $U_\pct$ of $\pct$ in $\sfA$ such that 
\begin{equation}\label{eq-3-1}
\forall V^\text{open}\Subset{U}_\pct ,\;\;
\forall u\in\so_{\sfA}(V)\cap\cC^0(\bar{V}),\;\;
{\sup}_V|u|\leq{\sup}_{\partial_{\sfA}{V}}|u|.
\end{equation}
\end{enumerate}

\end{prop}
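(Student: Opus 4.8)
The plan is to establish the cycle $(1)\Rightarrow(4)\Rightarrow(3)\Rightarrow(2)\Rightarrow(1)$, so that the only analytically substantial step is the passage from weak pseudoconcavity to a maximum principle for honestly holomorphic functions; everything else is either formal or a limiting argument. The implications $(4)\Rightarrow(3)\Rightarrow(2)$ are purely formal: since $\cO_{\sfA}=\cO_{\sfA,0}\subseteq\cO_{\sfA,\alphaup}\subseteq\tiO_{\sfA}$ for every ordinal $\alphaup$, the estimate $\sup_{V}|u|\le\sup_{\partial_{\sfA}V}|u|$ demanded of the larger class is demanded a fortiori of the smaller one, with the \emph{same} neighbourhoods $U_{\pct}$. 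Hence $(4)$ gives $(3)$ (take $\alphaup=1$) and $(3)$ gives $(2)$.

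For $(2)\Rightarrow(1)$ I argue by contraposition. If $\sfA$ fails to be weakly $1$-pseudoconcave at $\pct$, then it is set-theoretically $1$-pseudoconvex there, so by \eqref{eq-1.2'} there are a local model and a smooth $\rho$ whose complex Hessian at $\zq(\pct)$ has all $N$ eigenvalues positive, with $\sfA\setminus\{\pct\}$ mapped into $\{\rho<\rho(\zq(\pct))\}$. Expanding $\rho=\rho(\zq(\pct))+2\re h+H+o(|\zq-\zq(\pct)|^{2})$, where $h$ collects the holomorphic first- and second-order part and $H$ is the positive-definite Hermitian part, the function $u:=e^{h}$ restricts to a section of $\cO_{\sfA}$ and satisfies $2\re h=\rho-\rho(\zq(\pct))-H+o(|\zq-\zq(\pct)|^{2})\le-c\,|\zq-\zq(\pct)|^{2}$ on $\sfA$ near $\pct$. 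Thus $|u|=e^{\re h}$ has a strict interior maximum at $\pct$, and choosing $V=\sfA\cap B$ for a small ball $B$ violates \eqref{eq-3-1a}.

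The heart of the matter is $(1)\Rightarrow(4)$, which I split into a geometric core $(1)\Rightarrow(2)$ and a propagation step $(2)\Rightarrow(4)$. For $(1)\Rightarrow(2)$ I again argue by contradiction: a failure of \eqref{eq-3-1a} produces a holomorphic $u$ with $|u|$ attaining an interior maximum $M$ at some $\qct$, so $\log|u|$ is a \emph{pluriharmonic} barrier placing $\sfA$ locally inside $\{\log|u|\le\log M\}$. The \emph{main obstacle} is to upgrade this to a \emph{strictly} plurisubharmonic barrier, which would contradict weak $1$-pseudoconcavity at $\qct$: a single holomorphic function only contributes a Hermitian form of rank one, so the remaining positive directions must be supplied by the geometry of $\sfA$. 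Here I would exploit slicing: by Lemma~\ref{lem1.4} weak $1$-pseudoconcavity forces $\crdim(\sfA,\qct)\ge1$, and by Lemma~\ref{l-1.8} one may cut down along a generic complex line through a concave direction, reducing the claim to the ordinary one-variable maximum modulus principle on the slice.

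Finally, for $(2)\Rightarrow(4)$ the mechanism is that the maximum principle for $\cO_{\sfA}$ on $V$ furnishes, for each $\qct\in V$, a representing measure $\muup_{\qct}$ carried by $\partial_{\sfA}V$ with $g(\qct)=\int g\,d\muup_{\qct}$ for all $g\in\cO_{\sfA}(V)\cap\cC^{0}(\bar V)$ (obtained by extending the norm-one evaluation functional via Hahn--Banach and Riesz). The identity $f(\qct)=\int f\,d\muup_{\qct}$ is a closed condition under uniform convergence on the compact set $\partial_{\sfA}V\cup\{\qct\}$, hence it is inherited at every successor and limit stage of the resolution $\{\cO_{\sfA,\alphaup}\}$ and therefore persists for all $f\in\tiO_{\sfA}$, giving $|f(\qct)|\le\sup_{\partial_{\sfA}V}|f|$ for every $LACR$ function. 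This is exactly the device that bypasses the dependence of the successive approximation domains on the individual $f$; I expect the only remaining delicate point, beyond $(1)\Rightarrow(2)$, to be the bookkeeping needed to keep the convergence uniform up to $\partial_{\sfA}V$ while passing through the transfinite stages.
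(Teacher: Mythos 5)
Your reduction $(4)\Rightarrow(3)\Rightarrow(2)$ and the contrapositive argument for $(2)\Rightarrow(1)$ agree with the paper (the latter is exactly its argument: strict plurisubharmonicity of $\rhoup$ produces a holomorphic $h$ with $\re h(\pct)=0$ and $\re h<0$ on $\sfA\setminus\{\pct\}$ near $\pct$, and $e^{h}$ violates \eqref{eq-3-1a}). The two substantial steps, however, each contain a genuine gap.

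For $(1)\Rightarrow(2)$, your proposed upgrade of the pluriharmonic barrier $\log|u|$ to a strictly plurisubharmonic one does not go through. Lemma~\ref{lem1.4} concerns $\cC^2$-smooth submanifolds and assigns no $CR$ dimension to a general locally closed set $\sfA$; and Lemma~\ref{l-1.8} gives nothing here, since slicing a weakly $1$-pseudoconcave set by a complex submanifold of codimension $d\geq 1$ only yields weak $(1{-}d)$-pseudoconcavity, which is vacuous -- no residual concavity survives on a line, and $\sfA$ intersected with a line is merely some locally closed subset of $\C$ to which no ``one-variable maximum modulus principle'' applies. The paper circumvents this entirely by a different device: it embeds $\bar V$ into $\C^{n+1}$ by the graph map $\zq\mapsto(\zq,u(\zq))$, considers the strictly pseudoconvex domains $\Omega_{\epsilon,c}=\{\re w+\epsilon|z|^2<c\}$, takes the minimal $c$ for which the image of $V$ lies in $\overline{\Omega}_{\epsilon,c}$, and perturbs by $-\delta|z-z_1|^2$ to isolate a touching point $z_1$; a strictly plurisubharmonic defining function of the perturbed domain then contradicts weak $1$-pseudoconcavity of $\sfA$ at $z_1$.

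For $(2)\Rightarrow(4)$, the representing-measure scheme fails precisely at the point you defer as ``bookkeeping''. The measure $\mu_{\qct}$ is attached to a fixed $V$ and lives on $\partial_{\sfA}V$, but a section $f\in\cO_{\sfA,\alphaup}(V)\cap\cC^0(\bar V)$ is, by the definition of the resolution, only \emph{locally} approximable: each $x\in V$ has a neighbourhood $U_x$, possibly much smaller than $V$ and depending on $f$, on which $f$ is a uniform limit of sections of earlier stages defined only on $U_x$. There is no approximating sequence defined on all of $\bar V$, hence no uniform convergence on the compact set $\partial_{\sfA}V\cup\{\qct\}$, and the identity $f(\qct)=\int f\,d\mu_{\qct}$ is not a condition you can close up through the transfinite stages. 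What this argument actually yields is a maximum principle on small neighbourhoods depending on the individual $f$, and the passage from that to the estimate for every $V\Subset U_{\pct}$ is the missing content. The paper's transfinite induction instead localizes the putative violation: it forms the peak set $L=\{|u|=1\}\Subset V$, picks $\zq_0\in L$ of maximal norm, multiplies by $\exp(\zq^1-k)$ so that the maximum becomes isolated at $\zq_0$, and then approximates $u$ by earlier-stage sections on a single small neighbourhood of $\zq_0$, contradicting the inductive hypothesis there. Some device of this kind is indispensable to finish your argument.
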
 
In fomulas \eqref{eq-3-1a}, \eqref{eq-3-1b}, \eqref{eq-3-1} 
we wrote ${\partial_{\sfA}{V}}$
to stress the fact that we are taking 
the boundary of $V$ 
with respect to 
the subspace topology of $\sfA.$ 
\begin{proof} As $\cO_{\sfA}\,{\subseteq}\,
\cO_{\sfA,\alphaup}\,{\subseteq}\,\so_{\sfA},$  we have $(4)\,{\Rightarrow}\,(3)\,{\Rightarrow}\,(2)$. 
\par
We prove that $(2)\,{\Rightarrow}\,(1)$ by contradiction. 
If $\sfA$ is set-theoretically $1$-pseu\-do\-convex
at $\pct,$ then there is a local model of $\sfX$ at $\pct$ representing
a neighbourhood $\omegaup_{\pct}$ of $\pct$ in $\sfA$ as a closed
subset of an open subset $U$ of $\C^{N}$ for which there is 
an $f\,{\in}\,\cO(U)$ with $\re(f(\pct))\,{=}\,0$ and $\re(f(\qct))\,{<}\,0$
for $\qct\,{\in}\,\omegaup_{\pct}\,{\backslash}\{\pct\}.$
Clearly $u\,{=}\,\exp(f)$ violates 
the local maximum modulus principle \eqref{eq-3-1a}
for $\pct\,{\in}\,V^{\text{open}}\,{\Subset}\,\omegaup_{\pct}.$ 
Therefore
the local maximum modulus principle \eqref{eq-3-1a}
is a necessary condition for set-theoretic weak $1$-pseudoconcavity.
\par  
Let us prove that $(1)\,{\Rightarrow}\,(4)$.
We will do it in two steps, showing first that $(2)$
is valid 
and then extending this result 
by transfinite induction.\par 
While proving the local maximum modulus principle
we can restrain to the case where
$\sfA$ is a closed subset of the open ball $\{|\zq|{<}1\}$  
of 
$\C^n.$ We show that in this case \eqref{eq-3-1a}
is true 
for all relatively compact open subsets of $\sfA.$ 
Assume by contradiction that there are 
an open subset $U$ of $\sfA$ and 
a function
$f\in\cO_{\!\sfA}(U)\cap\cC^0(\bar{U})$  
for which \eqref{eq-3-1a} is false. 
We can assume that $0\,{\in}\,U$ and that
$1
{=}\max_{\overline{U}}|f|{>}\max_{\partial_{\sfA}{U}}|f|.$
Let us consider the embedding \begin{equation*}
\phiup:\bar{U} \ni \zq \longrightarrow (\zq, f(\zq))\in\C^{n+1}
=\C^{n}_{\zq}\times\C^1_{\wq}.
\end{equation*}
For $\epsilon,c\in\R$ and 
$\epsilon>0,$
the domains 
$\Omega_{\epsilon,c}=\{\re(w)+\epsilon|z|^2<c\}$
are strictly pseudoconvex. Moreover, if $\epsilonup>0$ 
is sufficiently
small, the set  
$\phiup(\partial_{\sfA}{U})$ 
is contained
in $\Omega_{\epsilon,c}$ for all $c\geq{1}.$ Having fixed an
$\epsilon>0$ with this property,  
there is a minimal 
$c\geq{1}$ 
for which $\phiup(U)\subset
\overline{\Omega}_{\,\epsilon,c}.$
In particular, we can find 
$z_1\,{\in}\,U$ with
$(z_1,f(z_1))\in\partial\Omega_{\epsilon,c}.$ 
Then we take a $0<\delta<\epsilon,$ in such a way that 
\begin{equation*}
\Omega'=\{z\in\C^{n+1}\mid \re(w)+\epsilon |z|^2-
\delta |z-z_1|^2<c\}
\end{equation*}
has
still a strictly pseudoconvex boundary. 
We fix a holomorphic extension $\tilde{f}$ of $f$ on 
a neighbourhood $\omega$ of $z_1$ in $\sfX$ and, accordingly,
we extend $\phiup$ to a holomorphic embedding
$\tilde{\phiup}$ of $\omega$ into
$\C^{n+1}.$  
We have
$\tilde{\phiup}(\sfA\,{\cap}\,{\omega}{\setminus}\{z_1\})\,{\subset}\,\Omega'$ and
$\tilde{\phiup}(z_1)\,{\in}\,\partial\Omega',$ contradicting the 
assumption that $\sfA$ is set-theoretically $1$-pseudoconcave 
at~$z_1.$ 
This establishes $(2)$. 
\par \smallskip
Let $\{\cO_{\sfA,\alphaup}\,{\mid}\,\alphaup{\preceq}\muup\}$ be the 
well-ordered family of sheaves,
with $\cO_{\sfA,0}{=}\cO_{\sfA}$ 
and $\cO_{\sfA,\muup}{=}\tiO_{\sfA},$ each being the
local uniform extension of the union of the preceeding ones. 
We will prove by transfinite induction 
that the maximum modulus principle
for $\cO_{\sfA,\alphaup}$ holds on all $V^{{\text{open}}}\,{\Subset}\,\sfA$ 
for $\alphaup\,{\preceq}\,\,\muup.$ 
We argue by contradiction. If \eqref{eq-3-1} it is not valid,  
then there is a smallest ordinal
$\alphaup\,{\preceq}\,\muup$ for which we can find a domain 
$D\Subset{\sfX}$ such that,
setting $V\,{=}\,D\,{\cap}\,\sfA,$  there is 
an~$u\,{\in}\,\cO_{\sfA,\alphaup}(V){\cap}\cC(\overline{V})$
for which 
\begin{equation}\tag{$*$} \label{eqstar}
1{=}{\max}_{\zq{\in}\bar{V}}|u(\zq)|\;
{>}\;{\max}_{\zq{\in}\partial{D}{\cap}\sfA}|u(\zq)|.
\end{equation} 
Let 
\begin{equation*}
L=\{\zq\in\overline{V}\mid\,|u(\zq)|=1\}\Subset{V}.
\end{equation*}
\par 
By the first part of the proof, we have $\alphaup\,{\succ}\,0,$ 
and the maximum modulus principle
is valid for sections of $\cO_{\sfA,\betaup}$ 
when $0\,{\preceq}\,\betaup\,{\prec}\alphaup.$ 
Fix $\zq_0$ in $L$ with 
$|\zq_{0}|\,{=}\,{\max}_{L}|\zq|.$ 
By a unitary change of coordinates we can assume that \hfill\linebreak
$\zq_{0}{=}(k,0,\hdots,0),$ for a real $k{\geq}0.$  
Fix an open neighbourhood $\omegaup$ of $\zq_{0}$ in
$\sfA,$ with $\omegaup\,{\Subset}\,\sfA,$ 
for which there is a sequence $\{u_{j}\}$ with
$u_{j}\,{\in}\,\cO_{\sfA,\alphaup_j}(\omegaup)
\,{\cap}\cC^0(\bar{\omegaup}),$ with $0{\preceq}\alphaup_j{\prec}
\alphaup$  
and $\sup_{\omegaup}|u(\zq){-}u_{j}(z)|{\leq}2^{-j}.$ 
\par 
The modulus of the 
function $u(\zq)\,{\cdot}\,\exp(\zq^{1}{-}k)$  on $\bar{\omegaup}$ 
has an isolated maximum at $\zq_{0}.$ In particular, if we fix 
$\epsilonup{>}0$
in such a way that 
$\{|\zq{-}\zq_{0}|{\leq}\epsilonup\}{\cap}\sfA{\Subset}\omegaup,$ 
then\par
\centerline{$|u(\zq)\,{\cdot}\,\exp(\zq^{1}{-}k)|{<}1{-}\deltaup$ 
for some $0{<}\deltaup{<}1$ if $\zq\,{\in}\,\sfA$ and $|\zq{-}\zq_{0}|{=}\epsilonup.$ } \par
If we choose $j$ with $2^{1{-}j}{<}\deltaup{\cdot}e^{\epsilonup},$ 
we obtain that, 
for $f_{j}(\zq)\,{=}\,u_{j}(\zq){\cdot}\exp(\zq^{1}{-}k)$, we have
$|f_{j}(\zq_{0})|{>}{\max}_{\zq\in\sfA,\;|\zq-\zq_{0}|
=\epsilonup}|f_{j}|.$
This yields a contradiction, completing the proof of the statement.  
\end{proof}
The final argument in the proof of Prop.\ref{p3.1}
yields 
the following slightly more general  
\begin{cor}\label{supset}
Let $\sfA$ be a locally
closed, set-theoretically weakly one-pseudoconcave subset 
of a reduced complex space $\sfX.$ If $u\,{\in}\,\so_{\sfA}(\sfA)$ and
$|u(\pct)|$ attains a maximum $s$ in $\sfA,$ then  
the set $L\,{=}\,\{\pct\,{\in}\sfA\,{\mid}\,|u(\pct)|\,{=}\,s\}$ is set-the\-o\-ret\-i\-cally weakly pseudoconcave.
\end{cor}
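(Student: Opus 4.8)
The plan is to argue by contradiction, producing an $LACR$ function whose modulus has a strict interior maximum, which is forbidden by the sharp maximum modulus principle of Proposition~\ref{p3.1}. Since $\sfA$ is set-theoretically weakly one-pseudoconcave, the equivalence of $(1)$ and $(4)$ in Proposition~\ref{p3.1} guarantees that $\so_{\sfA}$ satisfies the local maximum modulus principle: for every $\pct\in\sfA$ and every sufficiently small open $V\ni\pct$ with $\bar V$ compact, $\sup_V|w|\le\sup_{\partial_{\sfA}V}|w|$ for all $w\in\so_{\sfA}(V)\cap\cC^0(\bar V)$. Suppose now that $L$ were not set-theoretically weakly one-pseudoconcave. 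Then, by Definition~\ref{defn1.1a}, $L$ is set-theoretically one-pseudoconvex at some $\pct_0\in L$, and I aim to contradict this principle at $\pct_0$.

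First I would build a holomorphic peak function adapted to $L$. Passing to a local model, I may take $\pct_0=0\in\C^N$ and obtain, from one-pseudoconvexity together with the sharpened form \eqref{eq-1.2'}, a smooth $\rhoup$ with $\rhoup(0)=0$, with $\iq^{-1}\partial\bar\partial\rhoup(0)$ positive definite, and with $L$ contained near $0$ in $\{\rhoup<0\}\cup\{0\}$. Its Levi polynomial (see \cite{demailly1997}) is a holomorphic $g$ with $g(0)=0$ and $\re g(\zq)\le\rhoup(\zq)-c|\zq|^2+o(|\zq|^2)$ for some $c>0$; hence $\re g(0)=0$ while $\re g<0$ on $(L\setminus\{0\})$ in a neighbourhood of $0$. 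Because $\exp(g)\in\cO_{\sfA}$ and each sheaf $\cO_{\sfA,\alphaup}$ is a module over $\cO_{\sfA}$ (a fact proved by transfinite induction, since uniform limits on compacta are preserved under multiplication by a fixed holomorphic function), the product $v\coloneqq u\cdot\exp(g)$ is again a section of $\so_{\sfA}$ near $0$. On $L$ one has $|v|=s\,e^{\re g}$, so $|v|$ has a strict maximum $s$ at $0$ along $L$.

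It remains to promote this to a strict local maximum of $|v|$ over all of $\sfA$, and this is the step I expect to be the main obstacle: at points of $\sfA\setminus L$ one has $|u|<s$, but $\re g$ need not be negative there, since one-pseudoconvexity only constrains $L$. This is exactly the difficulty met, and overcome, in the final paragraph of the proof of Proposition~\ref{p3.1}, where the analogous peak factor $\exp(\zq^1-k)$ is treated at the farthest maximum point; I would resolve it in the same way. Fix $\epsilonup>0$ so small that $\{|\zq|=\epsilonup\}\cap\sfA$ lies in the chosen model, and estimate $|v|$ on this sphere. On the part meeting $\{\rhoup\le0\}$ the quadratic gain $\re g\le-\tfrac{c}{2}|\zq|^2$ forces $|v|\le s\,e^{-c\epsilonup^2/2}$; on the remaining part one approximates $u$ uniformly near $0$ by lower-order sections $u_j\in\cO_{\sfA,\alphaup_j}$ and applies the maximum modulus principle already available for these $\cO_{\sfA,\alphaup_j}$ (established in the transfinite induction of Proposition~\ref{p3.1}) to the functions $u_j\exp(g)$, thereby bounding $|v|$ away from $s$ there as well. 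This yields a $\deltaup>0$ with $|v|\le s-\deltaup$ on $\{|\zq|=\epsilonup\}\cap\sfA$, whereas $|v(0)|=s$ and $0$ is interior, so the maximum modulus principle for $\so_{\sfA}$ on $\{|\zq|<\epsilonup\}\cap\sfA$ is violated. This contradiction shows that $L$ is set-theoretically weakly one-pseudoconcave.
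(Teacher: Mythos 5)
Your overall strategy coincides with the paper's: argue by contradiction, produce a holomorphic peak function at a point where $L$ would be set-theoretically $1$-pseudoconvex, multiply $u$ by its exponential, and contradict the maximum modulus principle already secured by Proposition~\ref{p3.1}. The paper realizes the peak function by a quadratic change of coordinates turning $\{\rhoup<0\}$ into part of a round ball and then reducing to the final step of the proof of Proposition~\ref{p3.1} (the linear peak $\zq^1-k$ at the outermost point of $L$); your Levi polynomial $g$ is the same device in different clothing, and your observation that $u\cdot\exp(g)$ stays in $\so_{\sfA}$ because the sheaves $\cO_{\sfA,\alphaup}$ are $\cO_{\sfA}$-modules is correct and needed.

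The gap is precisely in the step you flag as the main obstacle. On the part of the small sphere $S_{\epsilonup}=\{|\zq|=\epsilonup\}\cap\sfA$ lying in $\{\rhoup>0\}$ you only know $|u|<s$ pointwise while $\re g$ may be positive, and the sentence ``applies the maximum modulus principle \dots to the functions $u_j\exp(g)$, thereby bounding $|v|$ away from $s$ there as well'' is not a deduction: the maximum principle for $u_j\exp(g)$ compares interior and boundary suprema of those functions on a domain and yields no upper bound for $|v|$ on a prescribed piece of $S_{\epsilonup}$. As written, nothing excludes $|v|>s$ somewhere on $S_{\epsilonup}\cap\{\rhoup>0\}$, which would destroy the contradiction ($0$ need not even be a local maximum of $|v|$). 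The repair is short: fix $\epsilonup$; note that $S_{\epsilonup}\cap\{\rhoup\geq 0\}$ is compact and, by \eqref{eq-1.2'}, disjoint from $L$, so $|u|\leq s-\deltaup_1$ there for some $\deltaup_1>0$; then replace $g$ by $\tauup g$ with $\tauup>0$ so small that $\exp\bigl(\tauup\,{\sup}_{S_{\epsilonup}}\re g\bigr)<s/(s-\deltaup_1)$. On $S_{\epsilonup}\cap\{\rhoup\leq 0\}$ your quadratic estimate still gives $|\exp(\tauup g)|\leq e^{-\tauup c\epsilonup^2/2}<1$, so after rescaling $|v|<s$ on all of $S_{\epsilonup}$ while $|v(0)|=s$, and the contradiction with Proposition~\ref{p3.1}(4) goes through. (The paper sidesteps writing this out by normalizing $\{\rhoup<0\}$ to a round ball and invoking the isolated-maximum assertion from the end of the proof of Proposition~\ref{p3.1}; your instinct that this is the delicate point is sound, but the patch you propose does not do the job.)
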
 
\begin{proof} 
Assume by contradiction that, under the assumptions in the statement, 
$L$ is not set-theoretically weakly 
pseudoconcave. Then 
there is a point $z_0\,{\in}\,{L}$,
an embedding $\varphi:U\hookrightarrow\C^N$ of some neighbourhood $U$ of $z_0$, with $\varphi(z_0)=0,$
and a smooth strictly plurisubharmonic function $\rhoup$ defined near $0$
such that $\rhoup(0)=0,$ $d\varphi(0)\neq{0}$ and 
$\varphi(L\cap U)\,{\subseteq}\,\{\rhoup\,{\leq}\,0\}.$
After a quadratic coordinate change, we may assume that $\{\varphi\,{<}\,0\}$
is strictly convex at $0$, and even
that it is an open subset of a round sphere in $\C^N.$
Now we got into the situation 
of the final part of the proof of Proposition 
\ref{p3.1},
and can obtain a contradiction 
to the local maximum principle as before.
\end{proof}
Inspection of the proof leads to another way of generalizing Proposition~\ref{p3.1}.
Replacing in the construction of $LACR$ functions $\cO_\sfA$ by some sheaf
$\cF_\sfA=\cF_{A,0}$ of germs of continuous functions on $\sfA$,
we define $\tilde{\cF}_\sfA=\cF_\sfA^{(c)}$ and obtain as before
a chain of intermediate sheaves $\cF_{A,\alpha}.$
The question is to what extent 
the validity of a local maximum principle
may be a  \textit{hereditary} property.
By repeating the arguments
at the end of the proof of Prop.\ref{p3.1} we obtain the following:
\begin{cor}\label{cor3.3}
If a local maximum 
modulus principle is valid for
 an $\cO_{\!\sfA}$-submodule $\cF_{\!\sfA}$ of $\cC_{\sfA}$,
then it is also valid for $\tilde{\cF}_{\!\sfA}$: this means that the following are equivalent: 
\begin{itemize}
 \item[($i$)] for every $\pct\,{\in}\,\sfA$ there is an open
neighbourhood $U_\pct$ of $\pct$ in $\sfA$ such that 
\begin{equation}\label{eq-3-1d}
\forall V^\text{open}\Subset{U}_\pct ,\;\;
\forall u\in\cF_{\sfA}(V)\cap\cC^0(\bar{V}),\;\;
{\sup}_V|u|\leq{\sup}_{\partial_{\sfA}{V}}|u|.
\end{equation}
 \item[($ii$)] for every $\pct\,{\in}\,\sfA$ there is an open
neighbourhood $U_\pct$ of $\pct$ in $\sfA$ such that 
\begin{equation}\label{eq-3-1e}\vspace{-16pt}
\forall V^\text{open}\Subset{U}_\pct ,\;\;
\forall u\in\tilde{\cF}_{\sfA}(V)\cap\cC^0(\bar{V}),\;\;
{\sup}_V|u|\leq{\sup}_{\partial_{\sfA}{V}}|u|.
\end{equation} \qed
\end{itemize}
\end{cor}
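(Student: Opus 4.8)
The implication $(ii)\Rightarrow(i)$ is immediate, since $\cF_{\!\sfA}\subseteq\tilde{\cF}_{\!\sfA}$ and the principle for the larger family restricts to the smaller one. The content is $(i)\Rightarrow(ii)$, which I would establish by transfinite induction along the resolution $\{\cF_{A,\alpha}\}$, with $\cF_{A,0}=\cF_{\!\sfA}$ and $\cF_{A,\mu}=\tilde{\cF}_{\!\sfA}$, mimicking the concluding argument in the proof of Proposition~\ref{p3.1}. Rather than induct on the bare principle, I would carry through the stronger assertion that \emph{for each $\pct\in\sfA$ there is one neighbourhood $U_\pct$ on which the local maximum modulus principle holds for $\cF_{A,\alpha}$ simultaneously for all ordinals $\alpha$}, i.e. $\sup_V|u|\leq\sup_{\partial_{\sfA}V}|u|$ for every $V\Subset U_\pct$ and every $u\in\cF_{A,\alpha}(V)\cap\cC^0(\bar V)$, with $U_\pct$ independent of $\alpha$. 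The base case $\alpha=0$ is exactly hypothesis $(i)$, and fixing $U_\pct$ there once and for all is what lets the induction close.

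For the inductive step, suppose the assertion holds on $U_\pct$ for all stages preceding $\alpha$, and suppose for contradiction that some $u\in\cF_{A,\alpha}(V)\cap\cC^0(\bar V)$, $V\Subset U_\pct$, violates the principle; normalise so that $1=\max_{\bar V}|u|>\max_{\partial_{\sfA}V}|u|$ and set $L=\{|u|=1\}\Subset V$. Picking $\zq_0\in L$ of maximal norm and rotating so that $\zq_0=(k,0,\dots,0)$ with $k\geq0$, the holomorphic, nonvanishing factor $\exp(\zq^1-k)$ makes $\zq_0$ a strict maximum of $|u\cdot\exp(\zq^1-k)|$ over a small sphere $\{|\zq-\zq_0|=\epsilon\}\cap\sfA$, exactly as in Proposition~\ref{p3.1}. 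Writing $u$ near $\zq_0$ as a uniform limit of functions $u_j\in\cF_{A,\alpha_j}(\omega)$ with $\alpha_j\prec\alpha$, the crucial point is that, $\cF_{\!\sfA}$ being an $\cO_{\!\sfA}$-submodule, each $f_j=u_j\cdot\exp(\zq^1-k)$ again lies in $\cF_{A,\alpha_j}$; hence for $j$ large the inequality $|f_j(\zq_0)|>\max_{|\zq-\zq_0|=\epsilon}|f_j|$ contradicts the principle for $\cF_{A,\alpha_j}$ on $\{|\zq-\zq_0|<\epsilon\}\cap\sfA$, which is available by the inductive hypothesis on $U_\pct$.

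Two points require attention and carry the real weight. First, one must verify that the $\cO_{\!\sfA}$-module structure is inherited by every term $\cF_{A,\alpha}$ of the resolution, for otherwise the factor $\exp(\zq^1-k)$ could not be reattached at the preceding stage and the reduction would collapse; this follows by a secondary transfinite induction from the fact that multiplication by a fixed $h\in\cO_{\!\sfA}$ is continuous for uniform convergence on compact sets and thus commutes with the extension operation $(\,\cdot\,)^c$, so that $\cH^c$ is an $\cO_{\!\sfA}$-module whenever $\cH$ is. Second --- and this is where I expect the only genuine obstacle to lie --- the ordinals $\alpha_j$ appearing in the approximation of a single $u$ may form an infinite set, so the maximum modulus neighbourhood must not be allowed to shrink with the stage. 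This is precisely why I would run the induction on the uniform-neighbourhood statement of the first paragraph: the stage-independent $U_\pct$ supplied by hypothesis $(i)$ serves all the $\alpha_j$ at once, and the contradiction in the inductive step then takes place inside this one fixed $U_\pct$. The geometric input that $\exp(\zq^1-k)$ produces an isolated maximum at $\zq_0$ is imported verbatim from Proposition~\ref{p3.1} and needs no change.
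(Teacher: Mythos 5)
Your proof is correct and is essentially the paper's own: the paper disposes of Corollary~\ref{cor3.3} by simply repeating the transfinite induction at the end of the proof of Proposition~\ref{p3.1}, with the same choice of $\zq_0\in L$ of maximal norm and the same multiplier $\exp(\zq^1{-}k)$, whose reattachment at the earlier stage is exactly where the $\cO_{\!\sfA}$-module hypothesis enters. The two points you single out --- that the module structure is inherited by every term $\cF_{\sfA,\alpha}$ of the resolution because multiplication by a fixed holomorphic germ commutes with the local uniform extension, and that the neighbourhood must be kept fixed across all ordinals --- are precisely the details the paper leaves implicit, so nothing needs to change.
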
 
\begin{exam}
If $\sfA$ is a set-theoretically weakly $\qq$-pseudoconcave subset of $\C^n$,
then a local maximum principle is valid for the sheaf $\cF_\sfA^{(\qq)}$ 
obtained by restricting continuous functions which are holomorphic
in the variables $z_\qq,\ldots,z_n.$ 
This follows from Proposition \ref{p3.1} 
since nonempty sections $\sfA\cap\{z_1=c_1,\ldots,z_{\qq-1}=c_{\qq-1}\}$ 
are set-theoretically weakly pseudoconcave, see Lemma \ref{l-1.8}.

If $\sfA$ is a set-theoretically weakly $2$-pseudoconcave subset of $\C^n$,
a local maximum modulus principle is also valid for the sheaf $\cF_\sfA^{[\cdot]}$ obtained by restricting
holomorphic functions and allowing in addition discrete sets of gaps.
More precisely, $u\in\cF_{\sfA,(\pct)}^{[\cdot]}$
if there is a representative $\tilde{u}$ continuous on some relative neighbourhood $U\subset\sfA$ 
of $\pct$ such that $u|_{U\backslash\{\pct\}}\in\cO_\sfA(U\backslash\{\pct\}).$
To show the validity of the local maximum modulus principle, first
observe that at most countably many sections $\sfA\cap\{z_1=c\}$
contain {\textquotedblleft{gap singularities}\textquotedblright} of a given function,
then deduce the maximum modulus principle
along these sections by continuity.
\end{exam} 
\begin{exam} Denote by $\cc(\sfE)$ the set of connected components of a topological
space $\sfE$. \par 
 Let $\sfA$ be a locally closed subset of a reduced analytic complex variety $\sfX.$ 
 We call \emph{c-negligible} a closed subset $\sfB$ of $\sfA$ such that, for  every 
 relatively open subset
 $U$ of $\sfA$, the
 natural map $\cc(U{\setminus}\sfB){\to}
 \cc(U)$ is bijective. Then we can define the \textit{gap sheaf} $\cO_{\!\sfA,[\sfB]}$ by setting 
\begin{equation*}
 \cO_{\!\sfA,[\sfB]}(U)=\cC_\sfA(U)\cap\cO_{\!\sfA}(U{\setminus}\sfB),\;\;\forall U\;\text{open in}\,
 \sfA.
\end{equation*}
Then \eqref{eq-3-1d} is valid for $\cF_{\sfA}\,{=}\, \cO_{\!\sfA,[\sfB]}$.
\end{exam}
\par\medskip
Now we are ready to prove a global maximum 
modulus principle for LACR functions on  locally closed subsets of 
reduced complex spaces.
\begin{thm}\label{t3.6} Let $\sfA$ be a 
set-theoretically weakly $1$-pseu\-do\-con\-cave
 locally 
 closed  
subset of a reduced complex space $\sfX$ 
on which  a global
stictly plurisubharmonic function
$\rhoup$ is defined. 
Then  
for every  
relatively compact 
open subset 
$D$ of $\sfA$  
we have 
\begin{equation}\label{e3.6}
{\max}_{\zq\in 
\overline{D}}\,|f(\zq)|={\max}_{\zq\in
\partial_{\sfA} D}|f(\zq)|,\;\; 
\forall f\in\so_{\!\sfA}({D})\cap\cC^0(\bar{D}).
\end{equation}
\end{thm}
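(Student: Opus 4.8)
The plan is to argue by contradiction and to reduce the \emph{global} statement to the \emph{local} maximum modulus principle already established in Proposition~\ref{p3.1}, combined with the structural description of coincidence sets in Corollary~\ref{supset}. The decisive role of the global strictly plurisubharmonic function $\rhoup$ will be to turn the compactness of the set where $|f|$ is maximal into a single distinguished point at which set-theoretic $1$-pseudoconvexity is forced, in direct conflict with weak $1$-pseudoconcavity.

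First I would assume that \eqref{e3.6} fails for some relatively compact open $D\,{\subseteq}\,\sfA$ and some $f\in\so_{\!\sfA}(D)\cap\cC^0(\bar D)$. Writing $s={\max}_{\bar D}|f|$, the failure means $s>{\max}_{\partial_\sfA D}|f|$, so the set
\[
L=\{\pct\in\bar D\mid |f(\pct)|=s\}
\]
is nonempty, compact, and disjoint from $\partial_\sfA D$; hence $L\Subset D$, and in particular $f$ is an $LACR$ function on a full neighbourhood of $L$ in $\sfA$.

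Next, since $\sfA$ is set-theoretically weakly $1$-pseudoconcave, the local maximum modulus principle \eqref{eq-3-1} holds on $D$ by Proposition~\ref{p3.1}. The argument proving Corollary~\ref{supset} is entirely local near the points of $L$, so it applies verbatim here (using $L\Subset D$) and shows that $L$ is set-theoretically weakly $1$-pseudoconcave at each of its points. Now I would exploit compactness: the continuous function $\rhoup|_L$ attains its maximum at some point $\pct_0\in L\subseteq\sfA$, so that $\rhoup(\pct)\leq\rhoup(\pct_0)$ holds throughout $L$. Fixing, via Lemma~\ref{lem-1.1}, any convenient local model $\zq$ of $\sfX$ at $\pct_0$ in which $\rhoup$ is represented by a smooth strictly plurisubharmonic function on the ambient open set $U\subseteq\C^N$, the inequality valid on all of $L$ yields $\zq(L\cap\omegaup')\subseteq\{\rhoup\leq\rhoup(\zq(\pct_0))\}$, while strict plurisubharmonicity makes $\iq^{-1}\partial\bar\partial\rhoup(\zq(\pct_0))$ positive definite, hence possessing $N-\qq+1=N$ positive eigenvalues in the case $\qq=1$. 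This is precisely condition \eqref{eq-1.2}, so $L$ is set-theoretically $1$-pseudoconvex at $\pct_0$, contradicting the weak $1$-pseudoconcavity of $L$ at $\pct_0$ (Definition~\ref{defn1.1a}). This contradiction establishes \eqref{e3.6}.

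I expect the only genuinely delicate step to be the passage from the local principle to the global conclusion. Propagating the inequality ${\sup}_V|u|\leq{\sup}_{\partial_\sfA V}|u|$ through a chain of overlapping neighbourhoods $U_\pct$ need not close up on an arbitrary relatively compact $D$, and this is exactly where the hypothesis on $\rhoup$ enters: it selects a boundary-like maximum point $\pct_0$ of the compact set $L$ at which the eigenvalue count of \eqref{eq-1.2} is met automatically. Everything else is bookkeeping --- matching the definition of strict plurisubharmonicity to the case $\qq=1$ of set-theoretic pseudoconvexity, and observing that pseudoconvexity is independent of the chosen local model by Lemma~\ref{lem-1.1}.
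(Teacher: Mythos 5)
Your proof is correct and follows essentially the same route as the paper's: argue by contradiction, form the compact coincidence set $L\Subset D$, apply Corollary~\ref{supset} to get weak pseudoconcavity of $L$, and then contradict it at a point of $L$ where $\rhoup$ is maximal. The paper inserts some extra preparation (locally extending $\rhoup$ to strictly plurisubharmonic ambient functions and deforming them so the level set touches $L$ at an isolated point), but your direct verification of condition \eqref{eq-1.2} at $\pct_0$ renders that step unnecessary, so the shortcut is legitimate.
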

Note that $\rho$ is not required to be an exhaustion function.
Looking at the example where
$\sfX\,{=}\,\C\times\CP^1$ with coordinates 
$(z,(w_0\,{:}\,w_1)),$ 
the weakly $1$-pseudoconcave hypersurface $\sfA\,{=}\R\,{\times}\,\CP^1,$ 
$D\,{=}(-1,1)\,{\times}\,\CP^1\,{\Subset}\,\sfX,$ and 
taking 
$u\,{=}e^{-z^2}$, 
we see that some global assumption on $\sfX$ cannot be avoided.
\begin{proof} 
It is convenient to 
begin by preparing the function $\rhoup.$
By the definition of plurisubharmonic function on complex spaces, 
there is a countable locally finite covering of $\sfX$ by subdomains 
$\sfX_{\! j},$ 
each
equipped with a proper embedding 
$\varphi_j:\sfX_j\hookrightarrow U_j$
into an open subset $U_j$ of $\C^{n_j},$ such that  
each
$\rhoup_j\,{=}\,\rhoup\,{\circ}\,(\varphi_j)^{-1}$ has 
a striclty plurisubharmonic extension 
$\tilde{\rhoup}_j\,{\in}\,\cC^2(U_j).$
By generic and consistent deformations of the $\tilde{\rhoup}_j$'s,
followed by slight shrinking of  $\sfX_j$ and $U_j$, 
we may achieve that the $\tilde{\rhoup}_j$'s are Morse functions.
Replacing $n_j$ by $n_j{+}1$, 
we may even assume that 
the $\tilde{\rhoup}_j$'s do not have critical points, 
after adding an appropriate strictly subharmonic function depending 
only on the new variable.
\par
To argue by contradiction we assume that 
there is a relatively compact open subset 
$D$ of $\sfA$ and a function 
$u\in\so_{\sfA}(D)\cap\cC^0(\bar{D})$ with
$$1={\max}_{\overline{D}}\,|u|>{\max}_{\partial_{\sfA} D}\,|u|.$$
As in the proof of the previous lemma we consider
$L\,{=}\,\{\pct\,{\in}\,\overline{D}\,{\mid}\,|u(\pct)|\,{=}\,1\}\,{\Subset}\, D$
and pick a point $\pct_0\,{\in}\, L$ at which $\rhoup|_L$ 
attains its maximum.
\par
Let $\sfX_{j_0}$ 
be 
a member of the covering $\{\sfX_j\}$ containing $\pct_0.$
The rest of the proof is a local argument dealing only with 
$\sfA'\,{=}\varphi_{j_0}(\sfA\,{\cap}\, \sfX_{j_0})$, 
$L'\,{=}\varphi_{j_0}(L\,{\cap}\,\sfX_{j_0})$
and $\tilde{\rhoup}\,{=}\,\tilde{\rhoup}_{j_0}.$ 
Note that the push forward $\tilde{u}=u\circ(\varphi_j)^{-1}$
is an $LACR$ function.
Certainly we may deform $\tilde{\rhoup}$ so that 
$\{\tilde{\rhoup}\,{=}\,\tilde{\rhoup}(\pct'_0)\}\,{\cap}\, L'\,{=}\,\{\pct'_0\}$.
Hence $L'$ touches the smooth strictly pseudoconvex hypersurface
$\{\tilde{\rhoup}=\tilde{\rhoup}(\pct'_0)\}$ 
from the pseudoconcave side at an isolated point,
and we get a contradiction to Cor.\ref{supset}.
This completes the proof.
\end{proof}
A natural example of a weakly $1$-pseudoconcave set 
can be obtained by taking,
for a compact $K$ in $\C^n,$  
the essential part
$\widehat{K}\backslash K$
 of its polynomial hull, 
 considered as a closed subsets of $\C^n\,{\backslash} K.$
 In this case our maximum modulus principle
geleralizes the local maximum modulus theorem in
Rossi's paper~\cite{rossi60}.  \par\smallskip
Condition \eqref{e3.6} in Theorem~\ref{t3.6} is in fact,
by Proposition~\ref{p3.1}, necessary
and sufficient: 
\begin{thm}\label{thm-3-4}
Let $\sfA$ be  a  locally closed subset 
of a reduced complex space $\sfX$ on which
a strictly plurisubharmonic function $\rhoup$ is defined. 
Then the following properties are equivalent:
\begin{itemize}
\item[\texttt{a})] $\sfA$ is set-theoretically weakly $1$-pseu\-do\-con\-cave.
\item[\texttt{b})] For any domain $D\Subset \sfA$, we have
\begin{equation}\label{maxD'1}
{\max}_{\pct\in\overline{D}}|u(\pct)|
={\max}_{\pct\in\partial_{\sfA} D}|u(\pct)|,
\quad\forall u\in \so_{\sfA}(D)\cap\cC(\overline{D}). 
\end{equation}
\item[\texttt{c})] There is an ordinal $\alphaup\,{\succeqq}\,0$ such that
for any domain $D\Subset \sfA$, we have
\begin{equation}\label{maxD'1}\vspace{-19pt}
{\max}_{\pct\in\overline{D}}|u(\pct)|
={\max}_{\pct\in\partial_{\sfA} D}|u(\pct)|,
\quad\forall u\in \cO_{\sfA,\alphaup}(D)\cap\cC(\overline{D}). 
\end{equation}\qed
\end{itemize}
\end{thm}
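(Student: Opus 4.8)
The plan is to prove the cycle $(\texttt{a})\Rightarrow(\texttt{b})\Rightarrow(\texttt{c})\Rightarrow(\texttt{a})$, noting at the outset that the only substantial step has in fact already been carried out. The implication $(\texttt{a})\Rightarrow(\texttt{b})$ is nothing but Theorem~\ref{t3.6}: once $\sfA$ is set-theoretically weakly $1$-pseudoconcave and carries a global strictly plurisubharmonic function $\rhoup$, the conclusion \eqref{e3.6} of that theorem is, for sections of $\so_{\sfA}$, exactly the identity asserted in $(\texttt{b})$. So there is nothing new to do here; this is where the real content lies, resting on the preparation of $\rhoup$ into a critical-point-free Morse function, on the reduction to a single local chart $\varphi_{j_0}$, and on the contradiction with Corollary~\ref{supset} obtained by letting the strictly pseudoconvex level set $\{\tilde{\rhoup}=\tilde{\rhoup}(\pct'_0)\}$ touch $L'$ from the pseudoconcave side at an isolated point.

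Next, $(\texttt{b})\Rightarrow(\texttt{c})$ is immediate. By construction of the resolution $\{\cO_{\sfA,\alphaup}\mid \alphaup\preceq\muup\}$ one has $\cO_{\sfA,\alphaup}\subseteq\so_{\sfA}$ for every ordinal $\alphaup$, so the maximum modulus identity valid for all sections of $\so_{\sfA}$ restricts at once to the (a priori smaller) space $\cO_{\sfA,\alphaup}(D)\cap\cC(\overline{D})$. In particular one may simply take $\alphaup=\muup$, for which $\cO_{\sfA,\muup}=\so_{\sfA}$ and $(\texttt{c})$ becomes literally $(\texttt{b})$; so any choice of $\alphaup$ serves.

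Finally, for $(\texttt{c})\Rightarrow(\texttt{a})$ I would reduce the global statement of $(\texttt{c})$ to the local one of Proposition~\ref{p3.1}. Let $\alphaup\succeqq 0$ be the ordinal furnished by $(\texttt{c})$. Given any $\pct\in\sfA$, choose a relatively compact open neighbourhood $U_\pct\Subset\sfA$; then every $V^{\text{open}}\Subset U_\pct$ is a domain $\Subset\sfA$, so the identity in $(\texttt{c})$ applies to it and yields, for all $u\in\cO_{\sfA,\alphaup}(V)\cap\cC^0(\bar{V})$, the local inequality ${\sup}_V|u|\leq{\sup}_{\partial_{\sfA}V}|u|$. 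When $\alphaup\succ 0$ this is precisely condition $(3)$ of Proposition~\ref{p3.1} (formula \eqref{eq-3-1b}), and when $\alphaup=0$ it is condition $(2)$ (formula \eqref{eq-3-1a}); in either case Proposition~\ref{p3.1} identifies it with set-theoretic weak $1$-pseudoconcavity, i.e. with $(\texttt{a})$.

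The main obstacle is therefore not confronted in the present argument at all but already in Theorem~\ref{t3.6}, where the global hypothesis on $\rhoup$ is genuinely used (as the counterexample $\sfX=\C\times\CP^1$ shows it must be). Here the only care needed is bookkeeping: matching the global quantifier ``for any domain $D\Subset\sfA$'' against the neighbourhood-localized quantifier of Proposition~\ref{p3.1}, and treating the borderline value $\alphaup=0$ separately so that one lands on the appropriate clause of that proposition.
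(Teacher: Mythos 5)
Your proposal is correct and follows essentially the same route as the paper, which disposes of the theorem by remarking that the maximum principle of Theorem~\ref{t3.6} is, by Proposition~\ref{p3.1}, both necessary and sufficient: sufficiency of \texttt{a}) is Theorem~\ref{t3.6}, and the converse implications localize to the equivalences of Proposition~\ref{p3.1}. Your extra bookkeeping (the trivial inclusion $\cO_{\sfA,\alphaup}\subseteq\so_{\sfA}$ for \texttt{b})$\Rightarrow$\texttt{c}) and the case split on $\alphaup=0$ versus $\alphaup\succ 0$) only makes explicit what the paper leaves implicit.
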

\begin{ntz}
We will denote by
$\Psi_{\!\pq}(\sfA)$  the set of points 
where $\sfA$ is
set-theoretically 
$\pq$-pseudoconvex. \end{ntz}
\begin{thm}\label{t-3.8}
Let $\sfA$ be  a  compact subset 
of a reduced complex space $\sfX$ on which
a strictly plurisubharmonic 
function $\rhoup$ is defined.  Then $\Psi_{\! 1}(\sfA)\,{\neq}\,\emptyset$ and
we have 
\begin{equation}\label{e3.9}
 {\max}_{\sfA}|u|={\sup}_{\pct\in\Psi_{1}(\sfA)}|u(\pct)|,\;\;\forall u\in\so_{\sfA}(\sfA\,{\backslash}
 \overline{\Psi_{1}(\sfA)})\cap\cC^{0}(\sfA).
\end{equation}
\end{thm}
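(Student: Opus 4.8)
The plan is to treat the two assertions separately, both resting on the global strictly plurisubharmonic function $\rhoup$ together with Corollary~\ref{supset}. For the non-emptiness $\Psi_1(\sfA)\neq\emptyset$ I would simply locate a set-theoretically $1$-pseudoconvex point at an extremum of $\rhoup$. Since $\sfA$ is compact and $\rhoup$ continuous, $\rhoup$ attains its maximum on $\sfA$ at some point $\pct_1$. Choosing a local model $\zq:\omegaup_{\pct_1}\to(Z,\cO_Z)\subset U\subset\C^N$ at $\pct_1$, by the very definition of plurisubharmonicity on a reduced complex space the function $\rhoup$ is there represented by a smooth function on $U$ whose complex Hessian $\iq^{-1}\partial\bar\partial\rhoup$ is positive definite, hence has at least $N-\qq+1=N$ positive eigenvalues for $\qq=1$; moreover, as $\pct_1$ is a global maximum, $\zq(\sfA\cap\omegaup_{\pct_1})\subseteq\{\rhoup\leq\rhoup(\zq(\pct_1))\}$. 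Thus the conditions \eqref{eq-1.2} of Lemma~\ref{lem-1.1} hold at $\pct_1$ for $\qq=1$, so $\pct_1\in\Psi_1(\sfA)$.

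For the identity \eqref{e3.9}, the inequality $\sup_{\Psi_1(\sfA)}|u|\leq\max_{\sfA}|u|$ is immediate, and I would establish the reverse by contradiction. Put $s=\max_{\sfA}|u|$, attained because $u\in\cC^0(\sfA)$ and $\sfA$ is compact, and $t=\sup_{\Psi_1(\sfA)}|u|$, and suppose $t<s$. Since $u$ is continuous on all of $\sfA$ and $\Psi_1(\sfA)$ is dense in its closure, one has $\sup_{\overline{\Psi_1(\sfA)}}|u|=t<s$; hence the compact maximum locus $L=\{\pct\in\sfA\mid |u(\pct)|=s\}$ is disjoint from $\overline{\Psi_1(\sfA)}$ and so contained in the open set $\sfA'=\sfA\setminus\overline{\Psi_1(\sfA)}$. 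Every point of $\sfA'$ lies outside $\Psi_1(\sfA)$, so $\sfA'$ is set-theoretically weakly $1$-pseudoconcave; being open in $\sfA$ it carries $u$ as a section of $\so_{\sfA'}$ (Lemma~\ref{l-2.3}), and $|u|$ attains its maximum $s$ on $\sfA'$ exactly along $L$.

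These are precisely the hypotheses of Corollary~\ref{supset}, applied with $\sfA'$ in the role of $\sfA$, which yields that $L$ is itself set-theoretically weakly pseudoconcave at each of its points. To close the argument I would invoke $\rhoup$ a second time: $L$ being compact, $\rhoup|_L$ attains a maximum at some $\pct_0\in L$, and then the reasoning of the first paragraph shows that strict plurisubharmonicity of $\rhoup$ together with $L\subseteq\{\rhoup\leq\rhoup(\pct_0)\}$ force $L$ to be set-theoretically $1$-pseudoconvex at $\pct_0$. This contradicts the weak $1$-pseudoconcavity of $L$ at $\pct_0$, so $t=s$ and \eqref{e3.9} follows.

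I expect the genuinely delicate step to be the localization of the maximum locus into the open region $\sfA'$, not any later computation. The point is that $u$ is assumed $LACR$ only off $\overline{\Psi_1(\sfA)}$, so before Corollary~\ref{supset} can be used one must rule out that $L$ meets the boundary stratum $\overline{\Psi_1(\sfA)}\setminus\Psi_1(\sfA)$, on which $u$ need not be locally approximable. This is exactly where continuity of $u$ over the whole compact $\sfA$ and the passage from $\Psi_1(\sfA)$ to its closure are indispensable, and it is also where the global hypothesis on $\rhoup$ and the compactness of $\sfA$ genuinely enter.
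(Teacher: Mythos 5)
Your proof is correct and follows essentially the same route as the paper: nonemptiness of $\Psi_1(\sfA)$ from the maximum of $\rhoup$, then the contradiction obtained by showing the maximum locus $L$ would have to avoid $\overline{\Psi_1(\sfA)}$, be weakly pseudoconcave by Corollary~\ref{supset}, yet be $1$-pseudoconvex at a maximum of $\rhoup|_L$. Your write-up merely makes explicit the continuity step (passing from $\Psi_1(\sfA)$ to its closure) that the paper leaves implicit.
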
 
\begin{proof} The point $\op$ where $\rhoup$ attains its maximum on $\sfA$ belongs to
$\Psi_{1}(\sfA),$ which therefore is nonempty. If $\sfA\,{=}\,\overline{\Psi_{1}(A)}$, then
there is nothing to prove. Assume that $\sfA\,{\neq}\,\overline{\Psi_{1}(A)}$
and let $u\,{\in}\,\so_{\sfA}(\sfA\,{\backslash}
 \overline{\Psi_{1}(\sfA)})\cap\cC^{0}(\sfA).$ Set
\begin{equation*}
 L=\{\pct\,{\in}\,\sfA\mid |u(\pct)|={\max}_{\sfA}|u|\}.
\end{equation*}
By Cor.\ref{supset}, if $L\,{\cap}\,\overline{\Psi_{1}(\sfA)}\,{=}\,\emptyset,$ then
$L$ is set-theoretically 
$1$-pseudoconcave.   This yields a contradiction, because
the compact subset
$L$ is set-theoretically $1${-}pseu\-do\-convex at points where the restriction of $\rhoup$
to $L$ has a maximum. Hence $L\,{\cap}\,\overline{\Psi_{1}(\sfA)}{\neq}\emptyset$ 
and therefore \eqref{e3.9} holds true.
\end{proof}
We get a slight strengthening of \eqref{e3.9} by setting
 $\sfB{=}\sfA\backslash\mbox{int}_{\sfA}(\Psi_{1}(\sfA)),$
where $\mbox{int}_{\sfA}$ denotes the relative interior.
Since
$\partial_{\sfA}\Psi_{1}(\sfA){=}\Psi_{1}(\sfA)\backslash\mbox{int}_{\sfA}(\Psi_{1}(\sfA))$ 
is contained in $\Psi_{1}(\sfB)$, we obtain 
\begin{equation*}
 {\max}_{\sfB}|u|={\max}_{\partial_\sfA\Psi_{1}(\sfA)}|u|,\;\;\forall u\in\so_{\sfA}(\sfA\,{\backslash}
 \overline{\Psi_{1}(\sfA)})\cap\cC^{0}(\sfB).
 \end{equation*}

\section{Maximum modulus principle $\mathrm{I{I}}$}
\label{sec-4}
We may use Theorem~\ref{t3.6} as 
a first step in the proof of 
a maximum modulus principle for LACR functions on locally
 closed subsets of  reduced
complex spaces satisfying weaker global assumptions.

\begin{thm}\label{t4.1} Let $\pq,\qq$ be positive integers, 
 $\sfX$ 
 an $n$-dimensional complex manifold, 
 on which one can define a global
$\cC^2$-smooth real valued function whose
complex
 Hessian 
has at each point at least $n{-}\pq{+}1$ positive
eigenvalues, and  
$\sfA$  a set-theoretically weakly $\qq$-pseu\-do\-con\-cave
locally
 closed subset of $\sfX$. If $\qq{\geq}\pq,$ 
 then 
the maximum modulus
principle for $LACR$ functions 
holds on $\sfA$. This means that, 
for every  relatively compact domain $D{\Subset}\sfA,$
\begin{equation}\label{e4.1} 
{\max}_{\pct\in\overline{D}}|u(\pct)|
={\max}_{\pct\in\partial_{\sfA} D}|u(\pct)|,
\quad \forall{u}\in\so_{\sfA}(D)\cap\cC(\Bar{D}).
\end{equation}
\end{thm}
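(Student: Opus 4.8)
The plan is to argue by contradiction and to reduce the assertion, by slicing, to the already established case $\pq=1$ contained in Theorem~\ref{t3.6} and Corollary~\ref{supset}. Suppose \eqref{e4.1} failed. Then there would be a relatively compact domain $D\Subset\sfA$ and a function $u\in\so_{\sfA}(D)\cap\cC(\overline{D})$ which, after rescaling, satisfies $1=\max_{\overline{D}}|u|>\max_{\partial_{\sfA}D}|u|$. The set $L=\{\pct\in\overline{D}\mid|u(\pct)|=1\}$ is then a compact subset of $\sfA$ with $L\Subset D$. Writing $\rhoup$ for the given $\cC^2$ function on $\sfX$ whose complex Hessian has at least $n-\pq+1$ positive eigenvalues at each point, I would pick a point $\pct_0\in L$ at which $\rhoup|_L$ attains its maximum.

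The heart of the proof is to slice at $\pct_0$ by a complex submanifold adapted to $\rhoup$. Since $\iq^{-1}\partial\bar\partial\rhoup(\pct_0)$ has at least $n-\pq+1$ positive eigenvalues, there is a complex subspace of $\mathrm{T}_{\pct_0}\sfX$ of dimension $n-\pq+1$ on which this Hermitian form is positive definite; in holomorphic coordinates centred at $\pct_0$ I take $\sfY$ to be the corresponding coordinate plane, a complex submanifold through $\pct_0$ of dimension $n-\pq+1$, hence of codimension $d=\pq-1$. By openness of positive definiteness, $\rhoup|_{\sfY}$ is strictly plurisubharmonic on some neighbourhood $B$ of $\pct_0$ in $\sfY$. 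Lemma~\ref{l-1.8} now gives that $\sfA\cap\sfY$ is set-theoretically weakly $(\qq-d)=(\qq-\pq+1)$-pseudoconcave; since $\qq\geq\pq$ we have $\qq-\pq+1\geq1$, and weak $m$-pseudoconcavity for $m\geq1$ immediately entails weak $1$-pseudoconcavity (a $1$-pseudoconvexity witness is also an $m$-pseudoconvexity witness). By Lemma~\ref{l-2.3} the restriction $u|_{\sfA\cap\sfY}$ is an $LACR$ function on $\sfA\cap\sfY$, and its modulus attains the value $1$ at $\pct_0$.

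Applying Corollary~\ref{supset} with ambient space $B\subset\sfY$ to the weakly $1$-pseudoconcave set $\sfA\cap\sfY\cap B$ and to $u|_{\sfA\cap\sfY}$, I obtain that the level set $L\cap\sfY$ is set-theoretically weakly pseudoconcave, in particular at $\pct_0$. To contradict this I must exhibit a strictly plurisubharmonic function on $\sfY$ for which $L\cap\sfY$ lies on one side and touches at the \emph{isolated} point $\pct_0$; producing this isolation, while keeping strict plurisubharmonicity, is the one delicate point of the argument. Because $\pct_0$ maximizes $\rhoup|_L$, the perturbation $\rhoup_{\epsilonup}=\rhoup-\epsilonup|\zq-\pct_0|^2$ in the coordinates on $\sfY$ stays strictly plurisubharmonic on $B$ for $0<\epsilonup\ll1$ and forces $(L\cap\sfY)\setminus\{\pct_0\}\subset\{\rhoup_{\epsilonup}<\rhoup_{\epsilonup}(\pct_0)\}$ near $\pct_0$. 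As $\dim_{\C}\sfY=n-\pq+1$, a strictly plurisubharmonic function carries exactly the number of positive eigenvalues required for $1$-pseudoconvexity, so $\rhoup_{\epsilonup}|_{\sfY}$ certifies that $L\cap\sfY$ is set-theoretically $1$-pseudoconvex at $\pct_0$. This contradicts the weak pseudoconcavity just established, and the contradiction proves \eqref{e4.1}. The conceptual core is that the hypothesis $\qq\geq\pq$ is precisely what keeps the codimension-$(\pq-1)$ slice within the reach of the $\pq=1$ maximum modulus principle, and the only genuinely technical step is the eigenvalue-preserving perturbation isolating the contact point.
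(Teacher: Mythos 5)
Your proposal is correct and follows essentially the same route as the paper: contradiction, maximization of $\rhoup$ on the level set $L$, slicing at $\pct_0$ by an $(n{-}\pq{+}1)$-dimensional complex submanifold on which $\rhoup$ is strictly plurisubharmonic, Lemma~\ref{l-1.8} to get weak $1$-pseudoconcavity of the slice (using $\qq\geq\pq$), and Corollary~\ref{supset} to reach the contradiction. The only difference is that you spell out the perturbation $\rhoup-\epsilonup|\zq-\pct_0|^2$ isolating the contact point, a detail the paper leaves implicit (it is the content of the remark following Lemma~\ref{lem-1.1}).
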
 
\begin{proof}
 We argue by contradiction. 
 Assume that there is a relatively open subset $D$ of $\sfA$ with
 $D\,{\Subset}\,\sfA$ for which there is a function 
 $u\,{\in}\,\so_{\sfA}(D)\cap\cC(\Bar{D})$
for which \eqref{e4.1} is not valid. Then \vspace{-3pt}
\begin{equation*}
 L=\{\pct\in{D}\mid |u(\pct)|={\max}_{\overline{D}}|u|\}
\end{equation*}
is compact and therefore there is $\pct_{0}$ in $L$ with 
$\rhoup(\pct_{0})\,{=}\,{\max}_{L}\rhoup.$ 
We choose local coordinates $(U,z)$ centered at 
$\pct_{0}$ such that 
on the $(n{-}\pq{+}1)$-dimensional complex linear 
subspace~$\sfY\,{=}\,\{z^i{=}0,\; 1{\leq}i{\leq}\pq{-}1\}$ 
the complex hessian of $\rhoup$ 
is positive definite. \par
By Lemma~\ref{l-1.8} we know that 
$\sfB{\coloneqq}\sfA\,{\cap}\,\sfY$ is set-theoretically weakly pseudoconcave. 
But the restriction of
$\rhoup$ to $\sfY$ is strictly plurisubharmonic and then 
$L{\cap}\sfB\,{\subset}\,\sfY\,{\cap}\,
\{\rhoup{\leq}\rhoup(\pct_{0})\}$
shows that $L\,{\cap}\,\sfB\,{=}\,\{\pct\,{\in}\sfB\,{\mid}\,
|u(\pct)|\,{=}\,|u(\pct_{0})|\}$ is not set-theoretically weakly pseudoconcave, contradicting
Cor.\ref{supset}.
\end{proof}
If $K$ is a compact subset of $\C^n,$
then  
every holomorphic curve  
with boundary in $K$ is contained in its
polynomial hull.
Theorem \ref{thm-3-4} yields the following generalization 
of this fact:
\begin{cor}
Let $M$ be  a locally closed
$\cC^2$-smooth weakly $1$-pseu\-do\-con\-cave submanifold
of $\C^n$ and $K{\Subset}\C^n$ compact. Then every component
of $M\backslash K$ which is relatively compact in $M$ is contained
in $\widehat{K}$.\qed
\end{cor}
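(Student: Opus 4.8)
The plan is to apply the maximum modulus principle of Theorem~\ref{thm-3-4} to $M$ itself, using the globally defined strictly plurisubharmonic function $\rhoup(\zq)=|\zq|^2$ on $\C^n$. The whole argument then reduces to identifying the boundary over which the maximum is controlled, so I would organize it as a short topological observation followed by a one-line invocation of Theorem~\ref{thm-3-4} for each polynomial.

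Let $N$ be a connected component of $M\setminus K$ that is relatively compact in $M$. Since $K$ is compact, hence closed, $M\setminus K$ is open in $M$; as $M$ is a manifold, hence locally connected, its component $N$ is open in $M$, and its closure $\overline{N}$ in $M$ is compact. First I would verify that the boundary $\partial_M N=\overline{N}\setminus N$, taken in the subspace topology of $M$, satisfies $\partial_M N\subseteq K$. Indeed, if $\zq\in\partial_M N$ did not lie in $K$, it would lie in the open set $M\setminus K$ and hence in some component $N'$ of $M\setminus K$; since $N'$ is an open neighbourhood of $\zq$ and $\zq\in\overline{N}$, $N'$ would meet $N$, forcing $N'=N$ and $\zq\in N$, contradicting $\zq\in\overline{N}\setminus N$. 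Thus $\partial_M N\subseteq K$.

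Next I would fix an arbitrary holomorphic polynomial $p$ on $\C^n$. Its restriction to $M$ is a section of $\cO_M\subseteq\so_M$, and $p$ is continuous on the compact set $\overline{N}$. Since $N\Subset M$ is a domain, $\rhoup$ is strictly plurisubharmonic, and $M$ is set-theoretically weakly $1$-pseudoconcave, Theorem~\ref{thm-3-4} applies with $\sfA=M$ and $D=N$, giving
\begin{equation*}
\max_{\zq\in\overline{N}}|p(\zq)|=\max_{\zq\in\partial_M N}|p(\zq)|\leq\sup_{\zq\in K}|p(\zq)|,
\end{equation*}
the inequality being a consequence of $\partial_M N\subseteq K$. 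As this holds for every polynomial, each $\zq_0\in N$ satisfies $|p(\zq_0)|\leq\sup_K|p|$ for all $p$, that is $\zq_0\in\widehat{K}$; hence $N\subseteq\widehat{K}$.

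The substance of the argument lies entirely in the inclusion $\partial_M N\subseteq K$, which is what lets one replace the boundary values of $p$ on $N$ by its values over $K$; once this is in hand the statement is an immediate consequence of Theorem~\ref{thm-3-4}, whose hypotheses hold because $|\zq|^2$ provides the required global strictly plurisubharmonic function on $\C^n$. The relative compactness of $N$ in $M$ is used precisely to guarantee $N\Subset M$, making $N$ an admissible domain in that theorem; this is exactly the point where a component that is not relatively compact in $M$ could escape the conclusion.
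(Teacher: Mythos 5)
Your argument is correct and is exactly the route the paper intends: the corollary is stated as an immediate consequence of Theorem~\ref{thm-3-4} (with $\rhoup(\zq)=|\zq|^2$ as the global strictly plurisubharmonic function), and your two ingredients --- the topological observation $\partial_M N\subseteq K$ for a relatively compact component $N$, and testing the maximum modulus principle against arbitrary polynomials to land in $\widehat{K}$ --- are precisely what fills in the paper's omitted proof.
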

\begin{thm}\label{t-4.3}
Let $\pq$ be a positive integer and 
$\sfX$ an $n$-dimensional complex manifold 
 on which one can define a global
$\cC^2$-smooth real valued function whose 
complex Hessian 
has at each point at least $n{-}\pq{+}1$ positive
eigenvalues and  
$\sfA$  a compact subset of $\sfX.$  
Then $\Psi_{\!\pq}(\sfA)\,\neq\,\emptyset$ and 
\begin{equation}\label{eq-4.2}
 |u(\pct)|\leq {\sup}_{\Psi_{\!\pq}(\sfA)}|u|,\;\;\;
 \forall u\in\so_{\sfA}(\sfA\,{\backslash}\overline{\Psi_{\!\pq}(\sfA)})
 \cap\cC^{0}(\sfA).
\end{equation}
\end{thm}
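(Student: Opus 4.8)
The plan is to follow the scheme of Theorem~\ref{t-3.8}, with the strictly plurisubharmonic function there replaced by the given $\rhoup$ (whose complex Hessian has at least $n{-}\pq{+}1$ positive eigenvalues), and to import the slicing device from the proof of Theorem~\ref{t4.1} in order to reduce weak $\pq$-pseudoconcavity to weak $1$-pseudoconcavity on a complex slice, where Corollary~\ref{supset} can be applied.

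First I would establish $\Psi_{\!\pq}(\sfA)\,{\neq}\,\emptyset$. Since $\sfA$ is compact, $\rhoup$ attains a maximum at some $\op\,{\in}\,\sfA$, and then $\sfA\,{\subseteq}\,\{\rhoup{\leq}\rhoup(\op)\}$; taking $N{=}n$ in the local model at $\op$ (legitimate because $\sfX$ is a manifold), the hypothesis on the eigenvalues of the complex Hessian of $\rhoup$ gives condition \eqref{eq-1.2} of Lemma~\ref{lem-1.1}, so $\op\,{\in}\,\Psi_{\!\pq}(\sfA)$. If $\overline{\Psi_{\!\pq}(\sfA)}{=}\sfA$, then \eqref{eq-4.2} is immediate by density of $\Psi_{\!\pq}(\sfA)$ and continuity of $u$, so I would assume $\overline{\Psi_{\!\pq}(\sfA)}\,{\neq}\,\sfA$ and fix $u\,{\in}\,\so_\sfA(\sfA\,{\backslash}\,\overline{\Psi_{\!\pq}(\sfA)})\cap\cC^0(\sfA)$.

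Setting $L{=}\{\pct\,{\in}\,\sfA\mid|u(\pct)|{=}\max_\sfA|u|\}$, I would reduce the theorem to proving $L\,{\cap}\,\overline{\Psi_{\!\pq}(\sfA)}\,{\neq}\,\emptyset$, whence \eqref{eq-4.2} follows by continuity of $u$ and density of $\Psi_{\!\pq}(\sfA)$ in its closure. Arguing by contradiction, I would suppose $L\,{\cap}\,\overline{\Psi_{\!\pq}(\sfA)}\,{=}\,\emptyset$, so that $L$ lies in the relatively open set $W{=}\sfA\,{\backslash}\,\overline{\Psi_{\!\pq}(\sfA)}$, at every point of which $\sfA$ is, by Definition~\ref{defn1.1a}, weakly $\pq$-pseudoconcave and $u$ is $LACR$. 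Picking $\pct_0\,{\in}\,L$ with $\rhoup(\pct_0){=}\max_L\rhoup$, I would choose local coordinates $(U,z)$ at $\pct_0$, with $U\,{\subseteq}\,W$, so that the complex $(n{-}\pq{+}1)$-plane $\sfY{=}\{z^1{=}0,\dots,z^{\pq-1}{=}0\}$ is tangent to a subspace on which the complex Hessian of $\rhoup$ at $\pct_0$ is positive definite; this is possible precisely because that Hessian has at least $n{-}\pq{+}1$ positive eigenvalues. Since $\sfY$ has codimension $\pq{-}1$, Lemma~\ref{l-1.8} makes the slice $\sfB{\coloneqq}\sfA\,{\cap}\,\sfY$ weakly $1$-pseudoconcave near $\pct_0$, and Lemma~\ref{l-2.3} shows $u|_\sfB$ is $LACR$. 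As $|u|$ attains on $\sfB$ its maximum $\max_\sfA|u|$ at $\pct_0\,{\in}\,L\,{\cap}\,\sfB$, Corollary~\ref{supset} forces $L\,{\cap}\,\sfB$ to be weakly $1$-pseudoconcave at $\pct_0$.

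The contradiction, and the main point, comes from reading $L\,{\cap}\,\sfB$ the opposite way: $\rhoup|_\sfY$ is strictly plurisubharmonic at $\pct_0$ by construction, and since $\pct_0$ maximizes $\rhoup|_L$ we have $L\,{\cap}\,\sfB\,{\subseteq}\,\sfY\,{\cap}\,\{\rhoup{\leq}\rhoup(\pct_0)\}$ with equality at $\pct_0$; viewing $\sfY$ of dimension $n{-}\pq{+}1$ as the ambient manifold, $\rhoup|_\sfY$ then certifies that $L\,{\cap}\,\sfB$ is set-theoretically $1$-pseudoconvex at $\pct_0$, i.e.\ \emph{not} weakly $1$-pseudoconcave there. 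The hard part will be the bookkeeping of localization: both the plurisubharmonicity used to align $\sfY$ and the applicability of Corollary~\ref{supset} (which ultimately rests on the \emph{local} maximum modulus principle of Proposition~\ref{p3.1}) must be confined to the neighbourhood $U\,{\subseteq}\,W$ where $u$ is $LACR$ and $\sfA$ is weakly $\pq$-pseudoconcave. Once this is in place, the two incompatible descriptions of $L\,{\cap}\,\sfB$ at $\pct_0$ collide, giving $L\,{\cap}\,\overline{\Psi_{\!\pq}(\sfA)}\,{\neq}\,\emptyset$ and hence \eqref{eq-4.2}.
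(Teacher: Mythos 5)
Your proposal is correct and follows essentially the same route as the paper's proof: nonemptiness of $\Psi_{\!\pq}(\sfA)$ from the maximum of $\rhoup$, reduction to $L\,{\cap}\,\overline{\Psi_{\!\pq}(\sfA)}\,{\neq}\,\emptyset$, choice of $\pct_0$ maximizing $\rhoup|_L$, an $(n{-}\pq{+}1)$-dimensional complex slice $\sfY$ on which $\rhoup$ is strictly plurisubharmonic, Lemma~\ref{l-1.8} to get weak $1$-pseudoconcavity of the slice of $\sfA\,{\backslash}\,\overline{\Psi_{\!\pq}(\sfA)}$, and Corollary~\ref{supset} against the $1$-pseudoconvexity of $L\,{\cap}\,\sfY$ at $\pct_0$. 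Your extra care about restricting $u$ via Lemma~\ref{l-2.3} and confining the argument to the region where $\sfA$ is weakly $\pq$-pseudoconcave only makes explicit what the paper leaves implicit.
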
 
\begin{proof} 
Let $\rhoup$ be a $\cC^{2}$ smooth real valued function on $\sfX$ 
whose
complex hessian has everywhere $n{-}\pq{+}1$ positive eigenvalues. 
 The points of $\sfA$ where $\rhoup$ 
 has a maximum make a nonempty subset
 of $\Psi_{\!\pq}(\sfA).$ If $\sfA\,{=}\,\overline{\Psi_{\!\pq}(\sfA)},$ 
 then there is nothing to prove.
 \par
 Assume that  $\sfA\,{\neq}\,\overline{\Psi_{\!\pq}(\sfA)}$ and let 
$u\,{\in}\,\so_{\sfA}(\sfA\,{\backslash}\overline{\Psi_{\!\pq}(\sfA)})
 \cap\cC^{0}(\sfA).$ Set
\begin{equation*} L=\{\pct\,{\in}\,\sfA\mid |u(\pct)|
={\max}_{\sfA}|u|\}.\end{equation*}
Assume by contradiction that 
$L\,{\cap}\,\overline{\Psi_{\!\pq}(\sfA)}\,{=}\,\emptyset.$  
Fix a point $\op$ in $L$ 
where the restriction of $\rhoup$ to $L$ has a maximum 
and take
an $(n{-}\pq{+}1)$-dimensional complex submanifold 
$\sfY$ of $\sfX$ containing $\op$ 
and such that the restriction of $\rhoup$ to $\sfY$ 
has a positive definite complex hessian at $\op.$ 
The intersection $\sfY\,{\cap}\, 
(\sfA\,{\backslash}\overline{\Psi_{\!\pq}(\sfA)})$ is a  closed
subset of $\sfY$ which is set-theoretically pseudoconcave by Lemma~\ref{l-1.8}.
Then $L\,{\cap}\,\sfY$ should be set-theoretically pseudoconcave at all points because
of Cor.\ref{supset}. This contradicts the fact that $L\,{\cap}\,\sfY\,{\subseteq}\{\rhoup{\leq}0\}.$
We proved that $L\,{\cap}\,\overline{\Psi_{\!\pq}(\sfA)}\,{\neq}\,\emptyset$
and this implies~\eqref{eq-4.2}.
\end{proof}
An analogous refinement as the one following Thm.\ref{t-3.8} is valid.

\section{Applications to $CR$ manifolds.}\label{sec4} 
Recall that a $\cC^2$-smooth real submanifold 
$M$ of a complex manifold $\sfX$ is
$CR$ 
if the dimension of $H_\pct M=\mathrm{T}_\pct M\cap{J_\pct {\mathrm{T}_\pct M}}$ 
 (here $J{:}\mathrm{T}\sfX{\to}{T}\sfX$ is the almost complex structure of $\sfX$)
does
not depend on \mbox{$\pct\in M.$}
If $M$ is a $CR$ submanifold, then
$HM={\bigcup}_{\pct\in{M}}H_{\pct}M$ is a real
vector subbundle of $TM.$  
The \emph{characteristic bundle} $H^0 M\subset \mathrm{T}^* M$ 
of 
$M$ is 
the annihilator bundle of $HM.$ 
The restriction to $HM$ of the complex structure $J$ of $\sfX$ 
defines a complex structure on the fibers of $HM$ 
and we call the integers
\begin{equation*}
\text{$n_M={\rk}_{\C} H M$ 
\;\;and \;\;
 $d_M=\rk_{\R}H^0 M
\! =\!
\dim_{\R} M-2{n}_M$}\end{equation*}
its 
$CR$-dimension and $CR$-codimension, respectively.
The \emph{vector-valued Levi form} 
$\cL_M:HM\to{TM}{/}HM$
is 
defined by
\begin{equation*}
H_{\pct}M\ni X \to
\cL_{M} (X)=[J\widetilde{X},\widetilde{X}]({\pct}), \!\!\!\mod 
H_{\pct} M,
\end{equation*}
where $\widetilde{X}$ is any smooth 
$HM$-valued section extending 
$X$.
For $\xiup\in H^0_{\pct}M$, the \emph{$\xiup$-directional Levi form} is 
\[
\cL_{M}^\xiup (X)=\langle \xiup,[J\widetilde{X},
\widetilde{X}]({\pct})\rangle
=\langle{\xiup},\cL_M(X)\rangle.
\]
\begin{defn}\label{defn1.3}
We say that $M$ is
\emph{weakly $\qq$-pseudoconcave at $\pct\in M$} if for every
$\xiup\in H^0_{\pct} M$ the directional Levi form $\cL_{M}^\xiup$
has at least $\qq$ nonpositive eigenvalues.
\end{defn} 
The 
notions of weak pseudoconcavity 
of Definitions \ref{defn1.1a}
and \ref{defn1.3} 
coincide for $\cC^2$-smooth $CR$ submanifolds:
\begin{lem}\label{lem-1-22}
An embedded CR manifold $M$ of class $\cC^2$ 
is weakly $q$-pseu\-do\-con\-cave at $\pct\in M$
if and only if it is set-theoretically weakly $q$-pseu\-do\-con\-cave at $\pct$.
\end{lem}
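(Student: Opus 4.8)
The plan is to read both notions of weak $\qq$-pseudoconcavity through a single Hermitian form and invoke Proposition~\ref{p1.7}, which already recasts the set-theoretic condition as a statement about complex Hessians of defining functions. Fix local defining functions $\rhoup_1,\dots,\rhoup_k$ of $M$ near $\pct$ as in \eqref{eq-1.3} and, for $c\in\R^k$, set $\rhoup_c={\sum}_i c_i\rhoup_i$. By Proposition~\ref{p1.7}, $M$ is set-theoretically weakly $\qq$-pseudoconcave at $\pct$ exactly when, for every $c$, the complex Hessian $i\,\partial\bar\partial\rhoup_c(\pct)$ has at least $\qq$ nonpositive eigenvalues on $\mathrm{T}^{1,0}_\pct M$. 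So it suffices to identify the family of forms $\{i\,\partial\bar\partial\rhoup_c|_{\mathrm{T}^{1,0}_\pct M}\}_{c\in\R^k}$, up to a fixed nonzero scalar, with the family of directional Levi forms $\{\cL^{\xiup}_M\}_{\xiup\in H^0_\pct M}$ of Definition~\ref{defn1.3}, after which the equivalence of the two eigenvalue conditions is immediate.

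Two things must be checked. First, that the characteristic covectors produced by the defining functions exhaust $H^0_\pct M$. Here I would use the annihilator identity $(H_\pct M)^\perp = N^*_\pct M + J^*N^*_\pct M$ in $\mathrm{T}^*_\pct\sfX$, which holds because $H_\pct M=\mathrm{T}_\pct M\cap J\mathrm{T}_\pct M$ and annihilators convert intersections into sums; here $N^*_\pct M=\mathrm{span}\{d\rhoup_j\}$ is the conormal space and $J^*N^*_\pct M=\mathrm{span}\{d^{\mathrm c}\rhoup_j\}$, with $d^{\mathrm c}\rhoup_j=d\rhoup_j\circ J$. Each $d^{\mathrm c}\rhoup_j$ kills $H_\pct M$, and since $H^0_\pct M=(H_\pct M)^\perp/N^*_\pct M$, the classes of $d^{\mathrm c}\rhoup_1,\dots,d^{\mathrm c}\rhoup_k$ span $H^0_\pct M$; hence $\xiup_c:=d^{\mathrm c}\rhoup_c|_{\mathrm{T}_\pct M}$ runs through all of $H^0_\pct M$ as $c$ runs through $\R^k$.

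Second, and this is the computational core, I would establish the pointwise identity $\cL^{\xiup_c}_M(X)=\kappaup\cdot i\,\partial\bar\partial\rhoup_c(\pct)(w,\bar w)$ for $X\in H_\pct M$ with $(1,0)$-part $w$, where $\kappaup$ is a universal nonzero constant. The mechanism is the Cartan formula applied to the real $1$-form $\alphaup=d^{\mathrm c}\rhoup_c$ and the $HM$-valued fields $J\widetilde X,\widetilde X$: one has $\alphaup([J\widetilde X,\widetilde X]) = (J\widetilde X)\,\alphaup(\widetilde X) - \widetilde X\,\alphaup(J\widetilde X) - d\alphaup(J\widetilde X,\widetilde X)$. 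Since $\widetilde X$ and $J\widetilde X$ are $HM$-valued, hence tangent to $M$, the functions $\alphaup(\widetilde X)=d\rhoup_c(J\widetilde X)$ and $\alphaup(J\widetilde X)=-d\rhoup_c(\widetilde X)$ vanish identically on $M$, so the first two terms vanish at $\pct$; what survives is $-d\alphaup(J\widetilde X,\widetilde X)=-dd^{\mathrm c}\rhoup_c(J\widetilde X,\widetilde X)$, which by $dd^{\mathrm c}\rhoup_c=2i\,\partial\bar\partial\rhoup_c$ is a fixed multiple of the complex Hessian evaluated on $w$. Because $\xiup_c$ annihilates $H_\pct M$ and $[J\widetilde X,\widetilde X](\pct)\in\mathrm{T}_\pct M$, the left-hand side equals $\langle\xiup_c,[J\widetilde X,\widetilde X](\pct)\rangle=\cL^{\xiup_c}_M(X)$, and the computation also shows that this depends only on the value $X$ at $\pct$, confirming tensoriality.

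Finally I would read off the equivalence of eigenvalue counts. Both $\cL^{\xiup_c}_M$ and $i\,\partial\bar\partial\rhoup_c|_{\mathrm{T}^{1,0}_\pct M}$ are Hermitian forms on the $n_M$-dimensional complex space $H_\pct M\cong\mathrm{T}^{1,0}_\pct M$, proportional by the fixed scalar $\kappaup$. The possible sign of $\kappaup$ is harmless: since the quantifier ranges over all $c$ (equivalently, by the first step, over all $\xiup\in H^0_\pct M$) and $\rhoup_{-c}=-\rhoup_c$ reverses the Hessian, the assertion ``$i\,\partial\bar\partial\rhoup_c$ has at least $\qq$ nonpositive eigenvalues for all $c$'' is unchanged if ``nonpositive'' is replaced by ``nonnegative''. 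Consequently the condition ``$\cL^{\xiup}_M$ has at least $\qq$ nonpositive eigenvalues for every $\xiup\in H^0_\pct M$'' of Definition~\ref{defn1.3} holds if and only if the Hessian condition of Proposition~\ref{p1.7} holds, i.e. if and only if $M$ is set-theoretically weakly $\qq$-pseudoconcave at $\pct$. I expect the main obstacle to be the bookkeeping in the third step, namely verifying the vanishing of the lower-order terms and pinning down the constant in $dd^{\mathrm c}=2i\,\partial\bar\partial$; but this is a standard computation once the tangency of $\widetilde X$ and $J\widetilde X$ to $M$ is exploited.
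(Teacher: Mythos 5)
Your proof is correct, and it reaches the lemma by a somewhat different route than the paper's. The paper argues both directions by direct contrapositive constructions: given a set-theoretic $1$-pseudoconvexity witness $\phiup$ it asserts that $\cL_M^{d^c\phiup(\pct)}$ is the restriction of $\iq^{-1}\partial\bar\partial\phiup(\pct)$ to $H_\pct M$ and counts eigenvalues, and conversely, starting from $\xiup=d^c\rhoup(\pct)$ with $\cL_M^\xiup$ positive on an $(n_M{-}\qq{+}1)$-dimensional subspace, it builds the comparison function $\rhoup_1+\sum\rhoup_i^2-\epsilonup|\zq|^2$ --- in effect re-deriving inline the content of Proposition~\ref{p1.7}. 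You instead take Proposition~\ref{p1.7} as the pivot and reduce the whole statement to two pointwise linear-algebra facts: that the restrictions of $d^c\rhoup_1,\dots,d^c\rhoup_k$ span $H^0_\pct M$ (your annihilator identity is correct, and surjectivity of $c\mapsto\xiup_c$ is all that is needed, even at non-generic points where these classes are dependent), and that $\cL_M^{\xiup_c}$ is a fixed nonzero multiple of $i\,\partial\bar\partial\rhoup_c(\pct)$ on $\mathrm{T}^{1,0}_\pct M$, with the $c\mapsto-c$ symmetry disposing of the sign of the constant. One thing your organization buys: since you apply the Cartan computation only to the $\rhoup_c$, which vanish identically on $M$, the two lower-order terms vanish exactly; the paper's first direction invokes the same identity for a function $\phiup$ that is merely $\le 0$ on $M$ with a maximum at $\pct$, where the identity holds only up to a positive-semidefinite correction coming from the real Hessian of $\phiup|_M$ (harmless for the eigenvalue count, but left implicit there). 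Your version also makes explicit the tensoriality of $\cL_M^\xiup$, which the paper's definition takes for granted.
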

\begin{proof} 
Let $\sfX$ be an $n$-dimensional complex manifold and
$M$ a $CR$ submanifold of class $\cC^2$ of $\sfX.$
If $M$ is not set-theoretically weakly $q$-pseudoconcave at $\pct$,
then 
there are an open neighbourhood $U$ of $\pct$ in $\sfX$
and an $(n{-}q{+}1)$-pseudoconvex 
real valued smooth function $\phiup$ on $U$ 
such that $d\phiup(\pct){\neq}0$ and 
$M{\cap}U{\subset}\{\phiup{<}0\}{\cup}\{\pct\}.$ 
Then $\xiup={d}^c\phiup(\pct)\,{\in}\,H^0_{\pct}(M)$ 
and $\cL_M^{\xiup}$
is the restriction of $\iq^{-1}\partial\bar{\partial}\phiup(\pct)$ to $H_{\pct}M.$ 
This Hermitian symmetric form has at least $(n_M{-}\qq{+}1)$ positive eigenvalues on
$H_{\pct}M$ and therefore $M$ is not weakly $\qq${-}pseudoconcave at 
$\pct.$ 
\par 

Conversely, if $M$ is not weakly $q$-pseudoconcave at $\pct$, then
there is $\xiup\in H^0_{\pct} M$ and a complex
subspace $W$ of dimension $(n_M{-}q{+}1)$ 
of $H^0_{\pct} M$ on which $\cL_M^\xiup$ is 
positive definite.
Let $\xiup=d^c\rhoup(\pct)$ for a smooth real valued function
vanishing on $M\cap{U}_{\pct}.$ We can assume that 
$U_{\pct}$  is the domain of a 
coordinate chart
$(U_{\pct},z)$ in $\sfX,$ centered at $\pct,$   
where $M$ is described as the set of common zeros of real-valued 
$\cC^2$-smooth functions
$\rhoup_1,\hdots,\rhoup_{d_M},$ with $\rhoup_1\,{=}\,\rhoup$ 
and $\partial\rhoup_1(\pct),\hdots,\partial\rhoup_{d_M}(\pct)$
linearly independent. 
Then, having fixed holomorphic coordinates 
$\zq_1,\hdots,\zq_n$ centered ad $\pct,$ the function 
\begin{equation*}
 \phiup(\zq)=\rhoup_1{+}{\sum}_{i=1}^{d_M}\rhoup_i^2
 -{\epsilonup}|\zq|^2
\end{equation*}
has at $\pct,$ for small $\epsilonup>0,$  
a complex Hessian with at least
$n{-}q{+}1$ positive eigenvalues
and we can find an open neighbourhood $U$ of $\pct$ such that 
$M\cap{U}\subset\{\phiup{<}0\}{\cup}\{\pct\}.$ 
It suffices to approximate $\phiup$ 
with a smooth real valued function $\psiup$ with
$|\phiup-\psiup|{\leq}\deltaup|\zq|^2$ with $0{<}\deltaup{\ll}1$ 
and $\psiup(0){=}0,$ $d\psiup(0){=}d\phiup(0),$ 
 to show that $M$ is 
not 
 set-theoretically weakly $\qq$-pseudoconvex
at $\pct.$  
The lemma is proved.
\end{proof}
Recall that  a complex valued $\cC^1$-smooth function  is 
$CR$  on $M$ if its differential vanishes on 
$$\mathrm{T}^{0,1} M=(\C\otimes_M HM)\cap \mathrm{T}^{0,1} \sfX.$$
If $M$ is sufficiently smooth, this condition can be interpreted
in the weak sense in order to introduce the sheaves
$\cO^0_M$ of germs of continuous $CR$ functions and 
$\cO^{-\infty}_M$ of germs of $CR$ distributions on $M.$
If $M$ is $\cC^2$-smooth, 
the Baouendi-Treves approximation theorem (\cite{ba-tr}) 
implies that 
$\cO^0_M=\sO$.
If we only assume that
$M$ is of class $\cC^1$, then
we still have 
the inclusion
$\cO^1_M\subseteq\sO,$
but there are both cases in which 
the two sheaves are equal and cases
whether they are not.
Moreover, the 
size 
of the domains 
on which a given $LACR$ function can be uniformly approximated
by restrictions of holomorphic functions defined in the ambient space
may depend on the individual function,
whereas, according to the Baouendi-Treves approximation theorem,
 it may be taken to depend only upon 
 its domain of definition if $M$ is a $CR$ manifold of class $\cC^2.$
\par
Theorems \ref{t3.6}, \ref{thm-3-4} and \ref{t4.1} 
yield as a corollary
maximum modulus principles for continuous CR functions 
on embedded CR manifolds of class $\cC^2$.
For $M$ of class $\cC^1$, one still obtains 
maximum modulus principles for CR functions which
are $\cC^1$-smooth and continuous up to
the boundary.
For later reference, 
we state the following consequence 
for CR submanifolds of Stein manifolds.\par 
We denote by $M_{\mathrm{pcx}}$ the set of points of
a $CR$ submanifold $M$ of 
an $n$-dimensional complex manifold 
$\sfX$ at which $M$ is $1$-pseudoconvex: \begin{equation*}
M_{\mathrm{pcx}}=\{\pct\in{M}\mid \exists\,\xiup\in{H}^0_{\pct}M,\;\;
\text{with}\;\; \cL_M^{\xiup}>0\}.
\end{equation*}

\begin{cor}\label{X Stein}
Let $M$ be a CR submanifold of a complex manifold $\sfX$
on which a stricly plurisubharmonic
 function $\rhoup$ is defined.
\begin{enumerate}
\item[{\bf a)}]
For every domain $D\Subset M$, we have
\begin{equation}\label{maxDa}
{\max}_{z\in\overline{D}}|u(z)|={\max}_{z\in\partial_{M} D\cup 
(D\cap\overline{M}_{\mathrm{pcx}})}
|u(z)|,\quad\forall u\in{\cO}_{M}^{0}(D)\cap\cC(\overline{D}).
\end{equation}
\item[{\bf b)}] The maximum modulus principle 
\eqref{maxD'1}
 holds for every domain $D\Subset M$
if and only if $M$ is weakly $1$-pseu\-do\-con\-cave.\qed
\end{enumerate}
\end{cor}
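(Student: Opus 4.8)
The plan is to read \textbf{b)} as a direct translation of Theorem~\ref{thm-3-4}, and to derive the sharper statement \textbf{a)} by localizing the argument of Theorem~\ref{t-3.8}. For \textbf{b)}, I would invoke Lemma~\ref{lem-1-22} to identify, for a $\cC^2$-smooth $CR$ manifold, weak $1$-pseudoconcavity in the Levi form sense of Definition~\ref{defn1.3} with set-theoretic weak $1$-pseudoconcavity, together with the Baouendi--Treves approximation theorem \cite{ba-tr} to identify $\cO^0_M$ with $\so_M$. Under this dictionary the equivalence $(\texttt{a})\Leftrightarrow(\texttt{b})$ of Theorem~\ref{thm-3-4}, applied to $\sfA=M$, is precisely the asserted equivalence. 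One may alternatively obtain the forward implication as the special case $M_{\mathrm{pcx}}=\emptyset$ of part \textbf{a)}, while the necessity is the contrapositive in Theorem~\ref{thm-3-4}: set-theoretic $1$-pseudoconvexity at a point yields a peak function $\exp(f)$ that defeats the maximum modulus principle.

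The real content is \textbf{a)}. The inequality $\max_{\partial_M D\cup(D\cap\overline{M}_{\mathrm{pcx}})}|u|\le\max_{\overline{D}}|u|$ is immediate, so I would argue the reverse by contradiction. Set $s=\max_{\overline{D}}|u|$ and $L=\{\pct\in\overline{D}\mid |u(\pct)|=s\}$, and suppose that $L$ avoids both $\partial_M D$ and $D\cap\overline{M}_{\mathrm{pcx}}$. Then $L$ is a compact subset of the relatively open set $\sfA'\coloneqq D\setminus\overline{M}_{\mathrm{pcx}}$. Since every point of $\sfA'$ is a point of weak $1$-pseudoconcavity of $M$, Lemma~\ref{lem-1-22} shows that $\sfA'$ is a locally closed, set-theoretically weakly $1$-pseudoconcave subset of $\sfX$.

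Next, by Lemma~\ref{l-2.3} the restriction $u|_{\sfA'}$ lies in $\so_{\sfA'}(\sfA')$, and since $L\subset\sfA'$ it still attains $s$ as its maximum modulus on $\sfA'$. Corollary~\ref{supset} then forces $L$ to be set-theoretically weakly $1$-pseudoconcave. At this stage I would bring in the global strictly plurisubharmonic function $\rhoup$: choosing $\pct_0\in L$ at which $\rhoup|_L$ is maximal, the strictly pseudoconvex level hypersurface $\{\rhoup=\rhoup(\pct_0)\}$ touches $L$ from the pseudoconcave side at $\pct_0$. Quantitatively, $\rhoup-\rhoup(\pct_0)$ is $\le 0$ on $L$ near $\pct_0$ and its complex Hessian has all $n$ eigenvalues positive, so Definition~\ref{defn1.1a} makes $L$ set-theoretically $1$-pseudoconvex at $\pct_0$, contradicting the conclusion of Corollary~\ref{supset}. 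Hence the maximum of $|u|$ is attained on $\partial_M D\cup(D\cap\overline{M}_{\mathrm{pcx}})$, as claimed.

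I expect the difficulty to be bookkeeping rather than conceptual. The points requiring care are: checking that $\sfA'$ is weakly $1$-pseudoconcave at each of its points, so that Corollary~\ref{supset} genuinely applies; and verifying that restricting $u$ from $D$ to $\sfA'$ preserves both the LACR property and the attainment of the maximum on the compact set $L\subset\sfA'$. The geometric heart---that a set-theoretically weakly $1$-pseudoconcave compact cannot be tangent to a strictly pseudoconvex hypersurface from inside at an isolated point---is already encapsulated in Corollary~\ref{supset}, so no new analytic input beyond the results of \S\ref{sec-max} is needed.
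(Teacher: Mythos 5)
Your proposal is correct and follows essentially the route the paper intends: part \textbf{b)} is Theorem~\ref{thm-3-4} translated through Lemma~\ref{lem-1-22} and the Baouendi--Treves identification $\cO^0_M=\sO_M$, and part \textbf{a)} is exactly the mechanism of Theorem~\ref{t-3.8} and Corollary~\ref{supset} (the set $L$ where $|u|$ peaks, together with the maximum of $\rhoup$ on $L$, producing a point of set-theoretic $1$-pseudoconvexity). No genuinely different ideas are introduced, and the bookkeeping you flag (openness of $D\setminus\overline{M}_{\mathrm{pcx}}$ in $M$, restriction of $LACR$ functions via Lemma~\ref{l-2.3}) is handled correctly.
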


This result was first proved  by A.~Daghighi and the second author
(see the PhD thesis \cite{dagh}),
as the optimal one 
in a longer chain of results for embedded CR manifolds.
After earlier results in \cite{Ior86} 
(mainly sufficient conditions for the maximum 
modulus
principle
in terms of barrier varieties),
Corollary \ref{X Stein} was proved in \cite{Si}
for $M$ of class $\cC^4$ and $u$ of class $\cC^3$.
More recently, a refinement for functions of class $\cC^2$ 
was obtained in \cite{BW}.

\section{Holomorphic extension to  
ambient neighbourhoods}
\label{sect6}
In Example \ref{e-1.9} we constructed smooth set-theoretically weakly $(n{-}1)$-pseudo\-concave 
submanifolds $\sfM_{k}$ of 
codimension $2$ in 
$\C^{2n+1}\! ,$ having an isolated CR singularity at the origin.
Those
$\sfM_{k}$ are actually strictly 
$(n{-}1)$-pseu\-do\-con\-cave at all points 
$\pct\,{\neq}\,0$ and hence,  if $n\,{\geq}\, 2,$
all $CR$ functions
defined on relatively open
subsets of $\sfM_{k}\,{\backslash}\{0\}$  holomorphically
extend to open neighbourhoods 
in the ambient space $\C^{2n+1}\!$. A natural 
question is whether this is still valid for $LACR$ 
functions defined on a neighbourhood of $0$ in $\sfM_{k}.$
Thm.\ref{t6.1} will answer this question in the affirmative.
\par\smallskip
Let $\sfM$ be a smooth 
real submanifold of a complex manifold $\sfX.$
Following \cite{AHNP2010}, we say that 
\begin{defn}
$\sfM$ has the \emph{holomorphic extension property at 
its point $\pct$}
if every $LACR$ function defined on $\sfM$ near $\pct$ has a holomorphic extension
to an open neighbourhood of $\pct$ in $\sfX.$ \end{defn}
Sufficient conditions 
for the validity of the holomorphic
extension property in the case of $CR$ manifolds 
can be found in
\cite{AHNP08a,NP2015}.
Here we extend some of these results to smooth real submanifolds
carrying $CR$ singularities.
\subsection{Isolated $CR$ singularities} \label{s7.1}
We begin by proving a result on holomorphic extension at isolated $CR$ singularities.
\begin{thm}\label{t6.1}
 Let $\sfM$ be a 
 weakly $1$-pseudoconvave locally closed smooth real submanifold 
 of a complex manifold $\sfX$ and $\sfE$ 
 a discrete subset of $\sfM$. Assume that \begin{itemize}
 \item 
 the set $\sfM_{\text{reg}}$ of $CR$-regular points of $\sfM$
 is open and dense in $\sfM$ and is a generic $CR$ submanifold of $\sfX$;
 \item 
$\sfM\,{\backslash}\sfE$  has at each point the holomorphic extension property. 
\end{itemize}\par 
Then, for each relative domain $\omegaup\,{\subset}\,\sfM,$
there is an ambient domain $\Omega\,{\subset}\,\sfX$ with $\omegaup\,{=}\,\Omega\cap M$
such that all $LACR$ functions on $\omegaup$ 
have a unique holomorphic extension to $\Omega.$
In particular, every $LACR$ function defined on some open subset of $\sfM$ is $\cC^\infty$-smooth
and we have~${\tiO}_{\sfM}\,{=}\,\cO_{\sfM,0}.$
\end{thm}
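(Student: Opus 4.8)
The plan is to reduce the theorem to a local extension statement across the isolated points of $\sfE$, treating them one at a time since $\sfE$ is discrete. Fix an $LACR$ function $f\in\sO(\omegaup)$ and a point $\pct_0\in\sfE\cap\omegaup$; after shrinking I may assume $\omegaup\cap\sfE=\{\pct_0\}$. Every point of $\omegaup\setminus\{\pct_0\}\subset\sfM\setminus\sfE$ has the holomorphic extension property, so $f$ admits local ambient holomorphic extensions there. Because $\sfM_{\mathrm{reg}}$ is a generic $CR$ submanifold, two ambient extensions that agree on $\sfM_{\mathrm{reg}}$ agree on the overlap of their domains (a holomorphic function vanishing on a generic set vanishes), so these local extensions glue to a single holomorphic $F$ on an open ambient neighbourhood $\Omega_0\subset\sfX$ of $\omegaup\setminus\{\pct_0\}$ with $F|_{\omegaup\setminus\{\pct_0\}}=f$. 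The theorem will follow once I show that $F$ extends holomorphically across $\pct_0$, that the extension still restricts to $f$, and that the local domains organise into a single schlicht $\Omega$ with $\omegaup=\Omega\cap\sfM$.

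Next I would record a quantitative control on $F$. In local holomorphic coordinates centred at $\pct_0$ the function $|\zq|^2$ is strictly plurisubharmonic, so, since $\sfM$ is set-theoretically weakly $1$-pseudoconcave (Proposition~\ref{p1.7} identifies this with ordinary weak $1$-pseudoconcavity), the local maximum modulus principle of Proposition~\ref{p3.1}, together with Corollary~\ref{supset}, controls $|f|$ on small pieces of $\sfM$ near $\pct_0$ by the values of $f$ on the relevant boundaries. This is the bound that will turn any set-theoretic \emph{filling} of the puncture into a genuine, single-valued holomorphic extension matching $f$.

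The main obstacle is the extension of $F$ across the isolated $CR$ singularity $\pct_0$: the width of $\Omega_0$ may a priori shrink to zero as one approaches $\pct_0$, so a naive punctured-ball Hartogs argument is unavailable. The plan is to exploit weak $1$-pseudoconcavity at $\pct_0$. By the definition of set-theoretic weak $1$-pseudoconcavity (Definition~\ref{defn1.1a}) no strictly plurisubharmonic function peaks $\sfM$ at $\pct_0$, and this concave direction is what provides small analytic discs attached to $\sfM\setminus\{\pct_0\}$, where $F$ is already holomorphic on an ambient neighbourhood. Slicing by complex hypersurfaces and invoking Lemma~\ref{l-1.8} keeps the relevant sections set-theoretically weakly pseudoconcave, which is what forces these discs to close up on the correct side; I would then run a continuity principle, sliding the family so that the discs sweep out a full ambient neighbourhood of $\pct_0$ while their boundaries remain inside $\Omega_0$. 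Since $F$ is bounded there by the previous step, the continuity principle yields a bounded holomorphic extension $\widetilde F$ to a neighbourhood of $\pct_0$. This disc/slicing construction is where essentially all the work lies, and carrying it out rigorously is the heart of the argument.

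Finally I would glue $\widetilde F$ to $F$ on the overlap (again by uniqueness on the generic set $\sfM_{\mathrm{reg}}$), obtaining a holomorphic function on a full neighbourhood of $\pct_0$, and verify, using continuity of $f$ up to $\pct_0$, that it restricts to $f(\pct_0)$ at the puncture, hence to $f$ on all of $\omegaup$. Performing this at each point of $\sfE$ and patching the resulting ambient neighbourhoods---using uniqueness to rule out monodromy and shrinking to stay schlicht---produces the domain $\Omega$ with $\omegaup=\Omega\cap\sfM$ and a unique holomorphic extension of every $LACR$ function. In particular each such $f$ is the restriction of an ambient holomorphic function, hence $\Ci$-smooth, so $\tiO_{\sfM}\subseteq\cO_{\sfM,0}$; since the reverse inclusion is automatic and restriction is well behaved by Lemma~\ref{l-2.3}, the resolution collapses at step $0$ and $\tiO_{\sfM}=\cO_{\sfM,0}$.
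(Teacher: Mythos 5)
You have correctly identified the two reductions the paper also makes (discreteness of $\sfE$ lets one localize to a single point, and genericity of $\sfM_{\text{reg}}$ gives uniqueness of ambient extensions, so the whole theorem reduces to showing $\cO_{\sfM,1}=\cO_{\sfM,0}$ near a point of $\sfE$), and you have correctly located the crux: extending across the isolated $CR$ singularity when the ambient neighbourhood $\Omega_0$ of $\omegaup\setminus\{\pct_0\}$ may pinch at $\pct_0$. But at exactly that crux your proposal substitutes an intention for an argument. You assert that weak $1$-pseudoconcavity "provides small analytic discs attached to $\sfM\setminus\{\pct_0\}$" which can be slid by a continuity principle so as to sweep out a full neighbourhood of $\pct_0$ with boundaries staying inside $\Omega_0$. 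No such disc family is constructed, and it is not clear one exists: set-theoretic weak $1$-pseudoconcavity is a negative condition (no strictly plurisubharmonic peak function), and turning it into attached analytic discs through an isolated $CR$ singularity, with boundaries confined to a region that degenerates precisely where you need the discs to pass, is not a known consequence of the hypotheses. You acknowledge that "essentially all the work lies" there, which is an accurate self-assessment: the proof is missing its main step. A second, related gap is uniformity: the theorem asserts a single ambient domain $\Omega$ working for \emph{all} $LACR$ functions on $\omegaup$, while the holomorphic extension property only gives each function an extension to a neighbourhood whose size may depend on the function (the paper stresses this in the introduction). Your construction never addresses why the swept-out neighbourhood, or the domain of the glued extension, can be chosen independently of $f$.

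For comparison, the paper's actual mechanism avoids discs entirely. One approximates $u\in\cO_{\sfM,1}(\omegaup)$ uniformly by $\vq_j\in\cO_{\sfM,0}$; a Baire category and open mapping theorem argument in the Fr\'echet space $\cO_{\sfM,1}(\omegaup_1\backslash\{0\})$ produces a \emph{uniform} radius $r$ such that every $\vq_j$ extends holomorphically to $\bar{B}_\zetaup(r)$ for all $\zetaup\in\partial_{\sfM}\omegaup_2$ (this is where the uniformity you skipped is obtained). The Cauchy estimates $|D^{\alphaup}\vq_j(\zetaup)|\leq C\,\alphaup!/r^{|\alphaup|}$ then hold on $\partial_{\sfM}\omegaup_2$ with uniform $C$; since the derivatives $D^{\alphaup}\vq_j$ restrict to $LACR$ functions on $\omegaup_1$, the maximum modulus principle (Theorem~\ref{t3.6}, valid by weak $1$-pseudoconcavity) propagates these estimates from $\partial_{\sfM}\omegaup_2$ to the singular point $0$, so the Taylor series of every $\vq_j$ at $0$ converges on $B_0(r)$ with a uniform bound, and the uniform limit $u$ inherits a holomorphic extension there. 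If you want to salvage your outline, this power-series-plus-maximum-principle route is the step you need to supply in place of the disc construction; as written, the proposal does not constitute a proof.
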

\begin{proof} By the assumption that the embedding $\sfM\,{\hookrightarrow}\sfX$ is almost everywhere
generic, a holomorphic function defined on a connected open subset $\Omega$
of $\sfX$ intersecting $\sfM$
is uniquely determined by its values on $\sfM\,{\cap}\,\Omega.$ 
Thus the statement of the Theorem is equivalent to the fact that
${\tiO}_{\sfM}\,{=}\,\cO_{\sfM,0}$ and this, 
by the recursive definition of $LACR$ functions, 
 is equivalent to $\cO_{\sfM,1}\,{=}\,\cO_{\sfM,0}.$ 
\par 
The holomorphic extension property implies this at  points of $\sfM{\setminus}\sfE$.
Thus it suffices to show 
that 
$(\tiO_{\sfM})_{(\pct)}=(\cO_{\sfM,0})_{(\pct)}$
at points 
$\pct$ 
of 
$\sfE$,
permitting us
to localize, 
reducing  
to the case in which 
$\sfX$ is the unit ball in $\C^n$
and~$\sfE\,{=}\,\{0\}\,{\subset}\,\sfM$. 
\par
Stalkwise coincidence of $\cO_{\sfM,1}$ and $\cO_{\sfM,0}$ will follow
as soon as we have shown that $\cO_{\sfM,1}(\omegaup)\,{=}\,\cO_{\sfM,0}(\omegaup)$
for a family of  
arbitrarily small open neighbourhoods $\omegaup$ of $0$ in $\sfM.$ 
For $u\,{\in}\,\cO_{\sfM,1}(\omegaup)$ we can find a smaller connected neighbourhood 
$\omegaup_1{\Subset}\,\omega$ of $0$ in $\sfM$ such that  
$u$ can be uniformly 
approximated on $\bar{\omegaup}_{1}$ by a sequence 
$\{\vq_{j}\}\,{\subset}\,\cO_{\sfM,0}(\bar{\omegaup}_1).$
\par 
  A germ of holomorphic function $f$ at $\zetaup\,{\in}\,\sfX$
 is the sum of its Taylor series at $\zetaup$
\begin{equation*}
 f(z)={\sum}_{\alphaup\in\N^{n}}\frac{D_{z}^{\alphaup}f(\zetaup)}{\alphaup!} (z-\zetaup)^{\alphaup},
\end{equation*}
uniformly converging 
on the  balls $\bar{B}_{\zetaup}(r)\,{=}\,\{|z{-}\zetaup|{\leq}r\}$
which are contained in its domain of definition 
and its holomorphic derivatives at $\zetaup$ satisfy  
\begin{equation*}\tag{$*$}
 |D_{z}^{\alphaup}f(\zetaup)|\leq C\,\frac{\alphaup!}{r^{|\alphaup|}},\;\;\forall\alphaup\in\N^{n},
\end{equation*}
with $C\,{=}\,\sup_{|z-\zetaup|=r}|f(z)|.$ 
Vice versa, if complex numbers $c_{\alphaup}$ are given, which satisfy 
\begin{equation*}\tag{$**$}
 |c_{\alphaup}|\leq C\,\frac{\alphaup!}{r^{|\alphaup|}},\;\;\forall\alphaup\in\N^{n},
\end{equation*}
for some constant $C{\geq}0,$ 
then the series
\begin{equation*}
 {\sum}_{\alphaup\in\N^{n}}\dfrac{c_{\alphaup}}{\alphaup!} (z-\zetaup)^{\alphaup}
\end{equation*}
converges, uniformly on compact subsets, to a function $f$ which is holomorphic on the ball
$B_{\zetaup}(r)=\{|z-\zetaup|{<}r\},$ with $D^{\alphaup}_{z}f(\zetaup)=c_{\alphaup}$ for
all multi-indices~$\alphaup.$  \par
Fix a connected open neighbourhood $\omegaup_{2}$ of $0$ in $\sfM$ 
with $\omegaup_{2}\,{\Subset}\,\omegaup_{1}.$ 
The approximating functions $\vq_{j}$ holomorphically extend to open neighbourhoods
of the compact balls $\bar{B}_{\zetaup}(r)$ for all $\zetaup\,{\in}\,\partial_{\sfM}\omegaup_{2}$
and a same 
sufficiently small $r{>}0.$ 
The fact that this $r$ can be taken independent of $j$ follows indeed from a simple
functional analysis argument, since $\cO_{\sfM,1}(\omegaup\,{\backslash}\{0\})$ 
is a Fr\'echet space
which is equal to the union of the images of the continuous maps 
\begin{align*}
 \mathfrak{F}_{\nuup}
 {=}\{(u,f){\in} \cO_{\sfM,1}(\omegaup_{1}\,{\backslash}\{0\}){\times}\cO(B_{\zetaup}(2^{{-}\nuup})\,{\mid}\,
 f(z)\,{=}\,u(z),\, \forall z\in\omegaup_{1}{\cap}{B}_{\zetaup}(2^{-\nuup})\}\,{\ni}\, (u,f)\;\,
 \\
 \big\downarrow{\piup_{\nuup}}\;\\
  \cO_{\sfM,1}(\omegaup_{1}\,{\backslash}\{0\})\ni u.\;\;\;
\end{align*}
Since each $\mathfrak{F}_{\nuup}$ is a Fr\'echet subspace of the product
$\cO_{1}(\omegaup_{1}{\backslash}\{0\})\times\cO(B_{\zetaup}(2^{\nuup})),$ 
Baire's theorem implies that some $\piup_{\nuup_{\zetaup}}(\mathfrak{F}_{\nuup_{\zetaup}})$ 
is of the second Baire category
and hence equal to $\cO_{1}(\omegaup_{1}\,{\backslash}\{0\})$ by Banach's open mapping theorem.
Finally we find our uniform $r{>}0$ by taking a finite covering of the compact set $\partial_{\sfM}\omegaup_{2}$
by balls $B_{\zetaup}(2^{-\nuup_{\zetaup}})$ and taking $r{>}0$ so small that the $r$-neighbourhood
of $\partial_{\sfM}\omegaup_{2}$ is contained in their union.
\par 
Moreover,
the holomorphic derivatives of the $\vq_{j}$'s satisfy 
estimate $(*)$, with a uniform $C{\geq}0,$  
because by the holomorphic extension properties the values of their holomorphic extensions
to $\bar{B}_{\zetaup}(r),$ for $\zetaup\,{\in}\,\partial\omegaup_{2}$ and $r{\ll}1,$ 
are already taken on $\omegaup_{1}.$ 
\par
This implies that the coefficients of the Taylor series at $0$ of 
all $\vq_{j}$'s   satisfy a uniform estimate $(**).$ 
By the maximum modulus principle 
the restrictions to $\omega_{1}$ of the holomorphic derivatives of the $\vq_{j},$
which are LACR functions on $\omega_{1},$ satisfy $(**)$ at all points of
$\omega_{2}.$ In particular, 
the $\vq_{j}$'s
extend to holomorphic functions $\tilde{\vq}_{j}$'s, 
which are defined and uniformly bounded on compact subsets of $B_{0}(r).$ 
This implies that also their uniform limit is, on $M\,{\cap}\,B_{0}(r),$ the restriction
of a function which is holomorphic on $B(0,r).$ 
\end{proof}
\begin{rmk} In case the set $\sfM^{\sharp}$ of $CR$-singular points of $\sfM$ is
contained in $\sfE$, 
 by \cite{NP2015} the second assumption of Theorem\,\ref{t6.1} is equivalent to the fact
 that every continuous $CR$ function on an open subset of $\sfM\,{\backslash}\sfE$
 is $\Ci$-smooth.
\end{rmk}

\begin{thm}\label{t6.3}
Let $\sfM$ be a smooth real 
submanifold of a complex manifold $\sfX$
and $\sfE$ a discrete subset of $\sfM$ 
containing its $CR$ singularities.
Let $\sfE_{\rm reg}\,{\subset}\,\sfE$ be the subset of 
points of $\sfE$ where $\sfM$ is $CR$ regular.\par
Assume that:
\begin{itemize}
 \item $\sfM\,{\backslash}\sfE$ is a generic 
 weakly 
$1$-pseu\-do\-con\-cave $CR$ submanifold of $\sfX\,{\backslash}\sfE$;
\item $\sfM\,{\backslash}\sfE_{\textrm{reg}}$ has the holomorphic extension property at each of its points.
\end{itemize}
\par

Then for each relative domain $\omegaup\,{\subset}\,\sfM,$
there is an ambient domain $\Omega$ with $\omegaup\,{=}\,\Omega\cap \sfM$
such that all $LACR$ functions on $\omegaup\,{\backslash}\sfE_{\rm reg}$ 
have a unique holomorphic extension to $\Omega.$
\end{thm}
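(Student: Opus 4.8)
The plan is to follow the scheme of the proof of Theorem~\ref{t6.1}, now using that theorem to establish a removability statement across the isolated $CR$-regular points collected in $\sfE_{\rm reg}$. As there, the reduction rests on genericity: since $\sfM\setminus\sfE$ is generic, a holomorphic function on a connected ambient open set meeting $\sfM$ is determined by its restriction to $\sfM$, so any local holomorphic extensions of $u$ are unique and automatically agree on overlaps. It therefore suffices to produce a holomorphic germ restricting to $u$ near every point of $\omegaup$; the ambient domain $\Omega$ with $\Omega\cap\sfM=\omegaup$ is then assembled from these germs.

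At each $q\in\sfM\setminus\sfE_{\rm reg}$ the holomorphic extension property gives $(\tiO_\sfM)_{(q)}=(\cO_{\sfM,0})_{(q)}$, so $u$ is already a holomorphic restriction near $q$. By uniqueness these germs glue to a holomorphic function $\tilde u$ on an ambient neighbourhood $W$ of $\omegaup\setminus\sfE_{\rm reg}$; around a point $p\in\sfE_{\rm reg}\cap\omegaup$ the set $W$ is an ambient neighbourhood of $\sfM\setminus\{p\}$ with a hole at $p$. The whole problem thus reduces to extending $\tilde u$ holomorphically across each such isolated $p$.

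For this I would localise so that $\sfX$ is a ball, $p=0$, and $0$ is the only point of $\sfE$ nearby. Since $0$ is $CR$-regular while $\sfM\setminus\sfE$ is generic and weakly $1$-pseudoconcave, continuity of the holomorphic tangent bundle and of the Levi form shows that $\sfM$ is itself generic and weakly $1$-pseudoconcave at $0$. Genericity makes the real codimension equal the $CR$-codimension; as $\sfM$ has positive codimension and $CR$-dimension at least one, this forces $n\geq 2$, so that a single point has ambient complex codimension $\geq 2$. All hypotheses of Theorem~\ref{t6.1} now hold for $\sfM$ near $0$ with exceptional set $\{0\}$. Running its argument---the Baire-category and open-mapping step on the Fr\'echet space $\tiO_\sfM(\omegaup_1\setminus\{0\})$ via the maps $\mathfrak{F}_{\nuup}$, combined with the maximum modulus principle (Theorem~\ref{t3.6}, Corollary~\ref{supset})---produces a \emph{uniform} ambient extension radius $\delta_0>0$ for $\tilde u$ along the spheres $\{|\zq|=\delta\}\cap\sfM$ as $\delta\downarrow 0$. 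Consequently $W$ contains a genuine punctured ambient ball $B_0(r)\setminus\{0\}$, and since $n\geq 2$ Hartogs' removability extends $\tilde u$ holomorphically across $0$ to all of $B_0(r)$.

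The step I expect to be the main obstacle is exactly this: securing a uniform extension radius as one approaches the excluded centre $0$. The compactness that drove the Baire argument in Theorem~\ref{t6.1} is lost on the punctured neighbourhood, and, because $u$ is not given at $0$, the Taylor-coefficient estimate used there is not directly available. This is where weak $1$-pseudoconcavity must act as a continuity principle: the maximum modulus principle forces the holomorphic derivatives of $\tilde u$ to obey uniform Cauchy bounds on the spheres about $0$, preventing the extension radius from collapsing and letting the pseudoconcave direction carry $\tilde u$ inward across the missing point. Once $\tilde u$ has been extended across every point of $\sfE_{\rm reg}\cap\omegaup$, gluing with the germs of the second step and invoking uniqueness yields the holomorphic extension to the required ambient domain~$\Omega$.
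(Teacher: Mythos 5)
Your reduction to an isolated puncture $p=0$ and your handling of the points of $\sfM\setminus\sfE_{\rm reg}$ agree with the paper, but the route you take \emph{across} the puncture --- show that the ambient extension domain $W$ contains a punctured ball $B_0(r)\setminus\{0\}$ and invoke Hartogs --- is not the paper's, and the step you yourself single out as the main obstacle is a genuine gap, not a technicality. The Baire/open-mapping argument of Theorem~\ref{t6.1} produces, for each \emph{fixed} centre $\zeta$, a radius $r_\zeta$ valid for all functions in the relevant Fr\'echet space; the uniformity in $\zeta$ there comes from covering the \emph{compact} set $\partial_{\sfM}\omega_2$ by finitely many such balls. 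On the noncompact set $(\sfM\cap B_0(r))\setminus\{0\}$ nothing prevents $r_\zeta\downarrow 0$ as $\zeta\to 0$. Nor can the maximum modulus principle substitute for compactness here: every version proved in the paper (Theorem~\ref{t3.6}, Theorem~\ref{t-3.8}, Corollary~\ref{supset}) requires either that the domain be relatively compact in the set where the function is $LACR$, or continuity up to the whole boundary. A punctured neighbourhood of $0$ is not relatively compact in $\sfM\setminus\{0\}$, and $u$ (hence $D^{\alpha}\tilde u$) is not known to be bounded, let alone continuous, at $0$ --- that is precisely what is being proved. Applying the principle on annuli only bounds $|D^{\alpha}\tilde u|$ by its supremum over the union of the two boundary spheres, and the inner one is uncontrolled. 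So the assertion that weak $1$-pseudoconcavity ``prevents the extension radius from collapsing'' is a hope, not an argument.

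The paper closes exactly this gap with an external input you do not use: wedge removability of isolated points (its reference \cite{MP1}). Since $\sfM\setminus\{0\}$ is generic and weakly $1$-pseudoconcave, the holomorphic extension $\tilde u$ on a neighbourhood $\Omega$ of $\sfM\setminus\{0\}$ extends further to a wedge-like domain $\mathcal V$ attached to \emph{all} of $\sfM$, including the puncture. Translating by a small vector $\epsilon\cdot v$ pointing into $\mathcal V$ gives $u_\epsilon(z)=\tilde u(z+\epsilon v)$, holomorphic on a full, unpunctured neighbourhood of a fixed neighbourhood $\omega$ of $0$ in $\sfM$; this restores the compactness needed for the Theorem~\ref{t6.1} argument and yields a radius $\eta>0$ depending only on $\omega$ such that $u_\epsilon$ extends to $B_0(\eta)$, i.e. $\tilde u$ extends to $B_{\epsilon v}(\eta)$, and letting $\epsilon\downarrow 0$ finishes the proof. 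If you want to keep your Hartogs endgame, you still need this wedge step (or an equivalent removability statement) to get holomorphy of $\tilde u$ on an open set reaching the puncture; it does not follow from the maximum modulus principle alone.
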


\begin{proof}
Localizing as in the proof of Thm.\ref{t6.1}, we 
reduce to 
the case where 
$\sfM$ is a generic $CR$ submanifold of $\C^{n}$,
$\sfE\,{=}\,\sfE_{\rm reg}\,{=}\,\{0\}$
and the LACR function $u$ is defined on $\sfM\backslash\{0\}$. 
By assumption $CR$ functions on $\sfM\backslash\{0\}$ 
have holomorphic extension to an ambient neighbourhood $\Omega$ of $\sfM\backslash\{0\}.$
Since isolated points are 
wedge removable (see e.g. \cite{MP1}), 
there is a wedge-like domain $\mathcal V$ attached
to all of $\sfM$ and a wedge-like domain ${\mathcal{V}'}\,{\subset}\,{\mathcal{V}}\cap\Omega$
attached to $\sfM\backslash\{0\}$
such that holomorphic functions $u$ on $\Omega$ 
have 
holomorphic extensions $\tilde{u}$ 
to $\mathcal{V}$
agreeing 
on ${\mathcal{V}'}$.
\par
Let now $\vq$ be a unit vector at $0$ pointing into $\mathcal V$ 
and $\omegaup\,{\Subset}\,\Omega$ a small neighbourhood of $0$ in $\sfM.$
For small $\epsilonup\,{>}\,0$, the function $u_{\epsilonup}(z)\,{=}\,\tilde{u}(z\,{+}\,\epsilonup{\cdot}\vq)$
is defined and holomorphic on a neighbourhood of ${\omegaup}$. 
By 
the argument in the proof of 
Thm.\ref{t6.1}, there is $\etaup\,{>}\,0$, only depending on $\omegaup$,  
such that $u_{\epsilonup}$ extends to a holomorphic
function on $B_{\etaup}({0})$. This means that $\tilde{u}$ holomorphically extends to 
$B_{\epsilonup{\cdot}\vq}(\etaup)$
for all sufficiently small $\epsilonup\,{>}\,0$ and hence that it extends holomorphically on a neighbourhood
of~$0$.
\end{proof}

For achieving  the proof of Thm.\ref{t6.3} it suffices that the extension property is valid
at points of $\partial\omegaup$. Hence the following variant follows from the same argument.
\begin{cor}\label{c6.4}
Let $\sfM$ be a smooth real submanifold of a complex manifold~$\sfX$,
$\sfE$ a discrete subset of $\sfM$ and $\sfA$ a closed subset of $\sfM{\setminus}\sfE$.
Assume that: 
\begin{itemize}
 \item $\sfM\,{\backslash}\sfE$ is a generic set-theoretically weakly 
$1$-pseu\-do\-con\-cave $CR$ submanifold of $\sfX\,{\backslash}\sfE$;
\item $\sfM\,{\backslash}\sfE$ has at each point 
the holomorphic extension property; 
\item $\sfA$ is $\mathcal{V}$-removable.
\end{itemize}
\par
Then for each relative domain $\omegaup\,{\subset}\,\sfM,$ 
there is an ambient domain $\Omega$ with $\omegaup\,{=}\,\Omega\cap \sfM$
such that all $LACR$ functions on $\omegaup{\backslash} \sfA$ 
have a unique holomorphic extension to $\Omega.$\qed
\end{cor}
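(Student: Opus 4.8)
The plan is to reproduce the argument of Theorem~\ref{t6.3} almost verbatim, with the $\mathcal{V}$-removability of $\sfA$ playing the role of the wedge-removability of the isolated points of $\sfE_{\rm reg}$ there, and invoking the holomorphic extension property of $\sfM\,{\setminus}\,\sfE$ only at boundary points, as the remark preceding the statement permits. First I would localize as in the proofs of Theorems~\ref{t6.1} and~\ref{t6.3}. Since $\sfM\,{\setminus}\,\sfE$ is open in $\sfM$ and contains $\sfA$, every point $\pct\,{\in}\,\sfA$ has a neighbourhood in $\sfM$ disjoint from $\sfE$; thus near such a $\pct$ I may assume that $\sfX$ is the unit ball of $\C^n$, that $\sfM$ is a generic $CR$ submanifold with $\sfE\,{=}\,\emptyset$, and that $u$ is an $LACR$ function on $\omegaup\,{\setminus}\,\sfA$ to be extended across $\sfA$. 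The points of $\sfE$ themselves are crossed by the argument of Theorem~\ref{t6.1}, so the genuinely new feature is the crossing of $\sfA$.

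Near such a $\pct$ the $LACR$ function $u$ is in particular a continuous $CR$ function on $\sfM\,{\setminus}\,\sfA$, and by the holomorphic extension property it is the restriction of a function holomorphic on a full ambient neighbourhood of $\sfM\,{\setminus}\,\sfA$. I would then invoke the hypothesis that $\sfA$ is $\mathcal{V}$-removable: it provides a wedge-like domain $\mathcal{V}$ attached to all of $\omegaup$ together with a sub-wedge $\mathcal{V}'\,{\subset}\,\mathcal{V}$ attached to $\omegaup\,{\setminus}\,\sfA$ on which the extension of $u$ agrees, producing a single holomorphic function $\tilde{u}$ on $\mathcal{V}$. This is precisely the output of the step ``isolated points are wedge removable'' in the proof of Theorem~\ref{t6.3}, here supplied by hypothesis for the closed set $\sfA$.

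It then remains to fill the wedge $\mathcal{V}$ to a full ambient neighbourhood. Let $\vq$ be a unit vector pointing into $\mathcal{V}$ and $\omegaup\,{\Subset}\,\sfM$ a small relative neighbourhood of $\pct$; for small $\epsilonup\,{>}\,0$ the translated functions $u_{\epsilonup}(z)\,{=}\,\tilde{u}(z\,{+}\,\epsilonup\vq)$ are holomorphic on a neighbourhood of $\omegaup$. By the argument in the proof of Theorem~\ref{t6.1}---which uses the holomorphic extension property only at the points of $\partial_{\sfM}\omegaup$, a Baire-category and open-mapping argument to render the radius of extension uniform, and the maximum modulus principle (Proposition~\ref{p3.1}, applicable since $\sfM\,{\setminus}\,\sfE$ is weakly $1$-pseudoconcave) to obtain uniform Cauchy bounds on the Taylor coefficients at the centre---there is an $\etaup\,{>}\,0$ depending only on $\omegaup$ such that every $u_{\epsilonup}$ extends holomorphically to $B_{\etaup}(0)$. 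Consequently $\tilde{u}$ extends holomorphically to $B_{\epsilonup\vq}(\etaup)$ for all small $\epsilonup$, and letting $\epsilonup\,{\downarrow}\,0$ gives a holomorphic extension of $u$ across $\sfA$ to a full neighbourhood of $\pct$. Uniqueness follows from the genericity of $\sfM$ near $\pct$.

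The step I expect to be the main obstacle is this last filling. One must ensure that the direction $\vq$ and the radii entering the Cauchy estimates are compatible with the wedge $\mathcal{V}$ produced by $\mathcal{V}$-removability, so that the estimates remain uniform as $\epsilonup\,{\downarrow}\,0$; this is the functional-analytic core of Theorem~\ref{t6.1} (a Fr\'echet space together with Baire category and the Banach open mapping theorem), and verifying that it transfers from a single isolated point to the wedge attached across the closed set $\sfA$ is where the real care is required.
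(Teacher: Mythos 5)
Your proposal is correct and follows essentially the same route as the paper, which proves Corollary~\ref{c6.4} simply by observing that the argument of Theorem~\ref{t6.3} goes through verbatim once one notes that the extension property is only needed at points of $\partial\omegaup$ and that the hypothesis of $\mathcal{V}$-removability of $\sfA$ supplies exactly the wedge-extension step that wedge-removability of isolated points supplied there. The subsequent wedge-filling via translation by $\epsilonup\vq$ and the Baire/open-mapping/maximum-modulus argument of Theorem~\ref{t6.1} is reproduced as in the paper.
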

\par
We obtain here a generalization of a result of \cite{NP2015}:
\begin{thm}\label{t6.7}
Let $\sfM$ be a smooth real 
submanifold of a complex manifold $\sfX$
and $\sfE$ a discrete subset of $\sfM$ 
containing its $CR$ singularities.
Then the following are equivalent: 
\begin{enumerate}
 \item All germs of $CR$ functions 
 at points of $\sfM\,{\backslash}\sfE$ are $\Ci$-smooth;
 \item All germs in $\tiO_{\sfM}$ are $\Ci$-smooth;
 \item $\sfM$ has the holomorphic extension property.
\end{enumerate}
\end{thm}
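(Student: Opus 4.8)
The plan is to prove the cyclic chain $(3)\Rightarrow(2)\Rightarrow(1)\Rightarrow(3)$, reserving all the real work for the last implication. The implication $(3)\Rightarrow(2)$ is immediate: if every germ in $\tiO_{\sfM}$ is the restriction of a germ of ambient holomorphic function, it is in particular $\Ci$-smooth. For $(2)\Rightarrow(1)$ I would note that at a point $\pct\,{\in}\,\sfM\,{\backslash}\,\sfE$ the manifold $\sfM$ is $CR$-regular, so by the Baouendi--Treves approximation theorem \cite{ba-tr} every continuous $CR$ germ at $\pct$ is a local uniform limit of restrictions of ambient holomorphic functions, hence lies in $\cO_{\sfM,1}\,{\subseteq}\,\tiO_{\sfM}$; assumption $(2)$ then forces it to be $\Ci$-smooth, which is $(1)$.

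The heart of the matter is $(1)\Rightarrow(3)$, which I would obtain by verifying the hypotheses of Theorem~\ref{t6.1} for the pair $(\sfM,\sfE)$ and quoting its conclusion. The analytic hypothesis --- that $\sfM\,{\backslash}\,\sfE$ has the holomorphic extension property at each of its points --- is, by the Remark following Theorem~\ref{t6.1} (which rests on the regularity theory of \cite{NP2015}), exactly equivalent to $(1)$, since $\sfM^{\sharp}\,{\subseteq}\,\sfE$ makes $\sfM\,{\backslash}\,\sfE$ a $CR$ manifold. The density and openness of $\sfM_{\mathrm{reg}}$ are automatic because $\sfM^{\sharp}$ is discrete, so that $\sfM_{\mathrm{reg}}\,{=}\,\sfM\,{\backslash}\,\sfM^{\sharp}$ is open and dense; genericity, which need not hold a priori (a complex submanifold is $CR$-hypoelliptic but not generic), I would reduce to by passing locally to a minimal ambient complex submanifold containing $\sfM_{\mathrm{reg}}$, in which $\sfM$ is generic and where holomorphic extension in the ambient space follows trivially from extension in the submanifold.

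The remaining and most delicate hypothesis of Theorem~\ref{t6.1} is that $\sfM$ be set-theoretically weakly $1$-pseudoconcave, including at the isolated $CR$ singularities in $\sfE$. On $\sfM\,{\backslash}\,\sfE$ this follows from $(1)$ through \cite{NP2015}, whose characterization of $CR$-hypoellipticity as weak $1$-pseudoconcavity complemented by nonintegrability yields both the pseudoconcavity and the genericity of the regular part. To propagate weak $1$-pseudoconcavity across the discrete set $\sfE$, I would use that the set $\Psi_{\!1}(\sfM)$ of set-theoretically $1$-pseudoconvex points is open, so that its complement, the weakly $1$-pseudoconcave locus, is closed and therefore contains the limits at points of $\sfE$ of weakly pseudoconcave points of $\sfM\,{\backslash}\,\sfE$. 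Once these geometric hypotheses are in place, Theorem~\ref{t6.1} delivers, for every relative domain $\omegaup\,{\subset}\,\sfM$, an ambient domain $\Omega$ to which all $LACR$ functions extend holomorphically, which is precisely $(3)$ (and, incidentally, $(2)$ and the equality $\tiO_{\sfM}\,{=}\,\cO_{\sfM,0}$).

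I expect the main obstacle to be exactly this geometric extraction: converting the purely analytic regularity hypothesis $(1)$ into the set-theoretic weak $1$-pseudoconcavity and the genericity required by Theorem~\ref{t6.1}, and in particular controlling these conditions across the $CR$-singular points, where the $CR$ structure degenerates and the Levi-form description of Definition~\ref{defn1.3} is unavailable. The openness of $\Psi_{\!1}$ for general locally closed sets, and the compatibility of the local reduction to a complex submanifold with the presence of the singularity, are the two technical points I would expect to cost the most.
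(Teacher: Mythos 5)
Your outer structure $(3)\Rightarrow(2)\Rightarrow(1)$ and the use of \cite[Thm.1.1]{NP2015} to translate $(1)$ into the holomorphic extension property of $\sfM\backslash\sfE$ agree with the paper. But for $(1)\Rightarrow(3)$ you route through Theorem~\ref{t6.1}, whereas the paper applies Corollary~\ref{c6.4} (hence Theorem~\ref{t6.3}, with $\sfA=\emptyset$). The difference is not cosmetic: Theorem~\ref{t6.1} requires $\sfM$ to be weakly $1$-pseudoconcave \emph{at the points of $\sfE$ as well}, while Theorem~\ref{t6.3} only needs $\sfM\backslash\sfE$ to be a generic weakly $1$-pseudoconcave $CR$ manifold and disposes of the discrete set $\sfE$ by wedge removability of isolated points, followed by the translation-into-the-wedge and Taylor-estimate argument. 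Your proposal therefore has to manufacture set-theoretic weak $1$-pseudoconcavity at the points of $\sfE$, and the argument you offer for this is where the proof breaks.

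Concretely, you claim that $\Psi_{\!1}(\sfM)$ is open, so that the weakly $1$-pseudoconcave locus is closed and absorbs the limit points in $\sfE$. This is false for general locally closed sets, and the paper's own example in \S\ref{sec-1} refutes it: for the closure $\overline{D}$ of a strictly pseudoconvex domain, $\Psi_{\!1}(\overline{D})=\partial D$ while every interior point is weakly $1$-pseudoconcave; thus $\Psi_{\!1}$ is closed rather than open there, and a limit of weakly $1$-pseudoconcave points is set-theoretically $1$-pseudoconvex. Nothing in Definition~\ref{defn1.1a} makes pseudoconvexity a stable (open) condition --- the peak function $\rhoup$ at $\pct$ gives no peak function at nearby points --- so the propagation of pseudoconcavity across $\sfE$ is unjustified (and it is not clear it is even true without further argument). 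The genericity reduction via a minimal complexification is also left vague, but the pseudoconcavity step is the genuine gap. The fix is to follow the paper: invoke Corollary~\ref{c6.4}, which never asks for any pseudoconcavity at the $CR$-singular points and instead removes the discrete set $\sfE$ through wedge extension.
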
 
\begin{proof}
 By \cite[Thm.1.1]{NP2015} $(1)$ is equivalent to the fact that $\sfM\,{\backslash}\sfE$ has
 the holomorphic extension property. By Cor.\ref{c6.4} this implies $(3).$ Since it is clear that
 $(3){\Rightarrow}(2){\Rightarrow}(1)$, this completes the proof.
\end{proof}
\par\subsection{Massive sets of $CR$ singularities} 
In \S\ref{s7.1} we showed connections between holomorphic extension property
and \textit{wedge removability}. 
Actually removable sets can be quite \textit{massive}. 
For example, it was shown in \cite{MP2} that, in the situation of Cor.\ref{c6.4},
each $\sfE$ 
with zero $(\dim_{\R}(\sfM){-}2)$-dimensional Hausdorff volume
is removable.\par 
Here we prove some further result of holomorphic extendability 
by using 
the results of 
\S\ref{s7.1} 
and 
Lemma~\ref{l-1.8}.
\par\smallskip
Let $\sfE$ be a locally closed subset of an
 $n$-dimensional complex manifold $\sfX.$ Fix an  
integer
$d,$ with $0{\leq}d{\leq}n$ 
and a point $\pct$ of $\sfE.$ 
\begin{ntz}
  Let $\mathcal{Y}(\sfE,\pct,n{-}d)$ be the set of germs
$(\sfY,\pct)$ of smooth $(n{-}d)$-dimensional complex submanifolds of $\sfX$ such that
$\pct$ is an isolated point of $\sfE\,{\cap}\,\sfY$.

\end{ntz}
We have a natural map 
\begin{equation}\label{e6.1}
\piup_{\pct}: \mathcal{Y}(\sfE,\pct,n{-}d)\ni (\sfY,\pct)\to \mathrm{T}^{1,0}_{\pct}\sfY
\in \Gr^{\C}_{\! n-d}(\sfX,\pct),
\end{equation}
where we denoted
by $\Gr^{\C}_{\! n-d}(\sfX,\pct)$ 
the Grassmannian of complex $(n{-}d)$-di\-men\-sional linear subspaces of $\mathrm{T}^{1,0}_{\pct}\sfX.$
\begin{defn}\label{d6.2}
We say that a subset $\sfE$ of an $n$-dimensional  
complex manifold $\sfX$ 
has \emph{transversal complex dimension
$d$} at its point $\pct$ if $d$ is the smallest nonnegative integer for which 
\begin{itemize} 
\item 
$\mathcal{Y}(\sfE,\pct,n{-}d)$ is not empty;  
\item 
$\piup_{\pct}(\mathcal{Y}(\sfE,\pct,n{-}d))$ is dense in 
an open subset of $\Gr^{\C}_{\! n-d}(\sfX,\pct).$
\end{itemize}
\par
We say that $\sfE$ has \emph{transversal complex dimension less or equal
to $d$} when its transversal dimension is less or equal $d$ 
at all points. 
\end{defn}
\begin{exam}\label{e6.6}
 An analytic subvariety  of pure dimension 
 $d$ of $\sfX$ has complex transversal dimension $d.$
 A smooth $CR$ submanifold of $\sfX$ having $CR$ dimension $m$ and $CR$ codimension
 $k$ has transversal complex dimension $m{+}\!\left[\frac{k{+}1}{2}\right].$ 
\end{exam}
\begin{thm}\label{t6.7}
Let $\sfM$ be a smooth real 
submanifold of a complex manifold $\sfX$
and $\sfE$ a relatively closed subset of $\sfM,$ 
containing its $CR$ singularities. We assume that 
\begin{itemize}
 \item $\sfM$ is set-theoretically weakly $\qq$-pseudoconcave;
 \item the transversal complex dimension of $\sfE$ is less than $\qq$.
 \end{itemize}
Then the following are equivalent: 
\begin{enumerate}
  \item $\sfM\,{\backslash}\sfE$ has the holomorphic extension property;
  \item $\sfM$ has the holomorphic extension property;
 \item all germs of $CR$ functions 
 at points of $\sfM\,{\backslash}\sfE$ are $\Ci$-smooth;
 \item All germs in $\tiO_{\sfM}$ are $\Ci$-smooth.
\end{enumerate}
\end{thm}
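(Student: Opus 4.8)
The plan is to single out $(1)\Rightarrow(2)$ as the substantive implication and to close the circle through the softer chain $(2)\Rightarrow(4)\Rightarrow(3)\Leftrightarrow(1)$. For the soft part: if $\sfM$ has the holomorphic extension property, then every germ in $\tiO_{\sfM}$ is locally the restriction of an ambient holomorphic function, hence $\Ci$-smooth, giving $(2)\Rightarrow(4)$; on the $CR$ manifold $\sfM\,{\backslash}\sfE$ the Baouendi--Treves theorem \cite{ba-tr} identifies continuous $CR$ functions with $LACR$ functions, so $(4)$ forces all $CR$ germs on $\sfM\,{\backslash}\sfE$ to be smooth, which is $(3)$; and $(3)\Leftrightarrow(1)$ is exactly the $CR$-hypoellipticity criterion of \cite{NP2015}, applied to the weakly $1$-pseudoconcave $CR$ manifold $\sfM\,{\backslash}\sfE$ (note $\qq\,{\geq}\,1$, since the transversal dimension $d\,{<}\,\qq$ is nonnegative, so weak $\qq$-pseudoconcavity specializes to weak $1$-pseudoconcavity). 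Since a germ at a point $\pct\,{\in}\,\sfM\,{\backslash}\sfE$ is the same whether read on $\sfM$ or on $\sfM\,{\backslash}\sfE$ (as $\sfE$ is relatively closed), one also gets $(2)\Rightarrow(1)$ for free, so it remains to prove $(1)\Rightarrow(2)$.

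For $(1)\Rightarrow(2)$ I would localize at a point $\pct\,{\in}\,\sfE$, the extension property off $\sfE$ being hypothesis $(1)$, and reduce to the case where $\sfX$ is a ball in $\C^n$ and $u\,{\in}\,\tiO_{\sfM}(\omegaup)$ is a given $LACR$ function near $\pct$. Let $d\,{<}\,\qq$ be the transversal complex dimension of $\sfE$ at $\pct$ (Definition~\ref{d6.2}), so that there is an open set $\mathcal{U}\,{\subset}\,\Gr^{\C}_{\!n-d}(\sfX,\pct)$ and, for a dense set of planes in $\mathcal{U}$, germs $(\sfY,\pct)\,{\in}\,\mathcal{Y}(\sfE,\pct,n{-}d)$ of $(n{-}d)$-dimensional complex submanifolds through $\pct$ with $\pct$ isolated in $\sfE\,{\cap}\,\sfY$. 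Fixing such a $\sfY$, chosen generically so that $\sfM\,{\cap}\,\sfY$ is a smooth real submanifold of $\sfY$, generic $CR$ away from $\pct$, the slicing Lemma~\ref{l-1.8} makes $\sfM\,{\cap}\,\sfY$ set-theoretically weakly $(\qq{-}d)$-pseudoconcave, hence weakly $1$-pseudoconcave as $d\,{<}\,\qq$; and $u|_{\sfM\cap\sfY}$ is $LACR$ on $\sfM\,{\cap}\,\sfY$ by Lemma~\ref{l-2.3}. The key point is that $\sfE\,{\cap}\,\sfY\,{=}\,\{\pct\}$ locally, so that the ambient holomorphic extension supplied by $(1)$ on a deleted neighbourhood of $\pct$ in $\sfX$ restricts to a holomorphic extension inside $\sfY$; thus $(\sfM\,{\cap}\,\sfY)\,{\backslash}\{\pct\}$ has the holomorphic extension property in $\sfY$. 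This places us exactly in the hypotheses of Theorem~\ref{t6.1} for the slice, which then yields a holomorphic extension $\tilde u_{\sfY}$ of $u|_{\sfM\cap\sfY}$ to a full neighbourhood of $\pct$ in $\sfY$.

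It remains to reassemble these slicewise extensions into an ambient one. Because each $\sfM\,{\cap}\,\sfY$ is generic in $\sfY$, the $\tilde u_{\sfY}$ are uniquely determined, so they agree with one another and with the single ambient extension furnished by $(1)$ on the common deleted neighbourhood. As $\sfY$ ranges over the dense family with tangent planes in the open set $\mathcal{U}$, the slices sweep out a full neighbourhood $\Omega$ of $\pct$ in $\sfX$, and the common value defines a function $\tilde u$ on $\Omega$ that is continuous, equals $u$ on $\sfM\,{\cap}\,\Omega$, is genuinely holomorphic on $\Omega\,{\backslash}\sfE$, and is holomorphic along every slice of the family. I would then conclude that $\tilde u$ is holomorphic on all of $\Omega$ by a separate-analyticity, Hartogs-type argument: holomorphy along a family of complex submanifolds whose tangent spaces fill the open set $\mathcal{U}\,{\subset}\,\Gr^{\C}_{\!n-d}(\sfX,\pct)$, combined with continuity and honest holomorphy off the thin set $\sfE$, forces joint holomorphy; running this at every point of $\sfE$ (where the transversal dimension is again $<\qq$) propagates the conclusion across $\sfE\,{\cap}\,\Omega$. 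This reassembly step is where the density-in-an-open-set clause of Definition~\ref{d6.2} is indispensable, and I expect it to be the main obstacle; the remaining points --- arranging the slices to be generic with $CR$ singularities confined to $\pct$, and checking uniqueness and consistency of the $\tilde u_{\sfY}$ (whence also uniqueness of the final extension) --- should be comparatively routine.
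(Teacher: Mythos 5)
Your overall architecture agrees with the paper: the soft chain $(2)\Rightarrow(4)\Rightarrow(3)$ together with $(1)\Leftrightarrow(3)$ from \cite{NP2015} reduces everything to $(1)\Rightarrow(2)$. It is in that implication that your argument departs from the paper's, and the departure creates a genuine gap. You extend only the restriction $u|_{\sfM\cap\sfY}$ \emph{inside} each slice $\sfY$ and then try to reassemble the slicewise extensions $\tilde u_{\sfY}$ into an ambient holomorphic function near $\pct$. This reassembly fails as described, for three reasons. First, every admissible slice passes through $\pct$, so the union of the slices is, to first order, a cone with vertex $\pct$ whose directions range over (a dense subset of) an open subset of $\Gr^{\C}_{n-d}(\sfX,\pct)$; unless that open subset is the whole Grassmannian --- which Definition~\ref{d6.2} does not guarantee --- such a cone is \emph{not} a neighbourhood of $\pct$ (already for lines through $0$ in $\C^2$ with directions in a small open subset of $\CP^1$). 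Second, even granting a covering, a function that is merely continuous at the centre and holomorphic along each member of a pencil of complex submanifolds through that centre need not be holomorphic: this is precisely the situation of Forelli-type theorems, which require smoothness at the centre, and no such regularity is available here. Third, your appeal to ``honest holomorphy off the thin set $\sfE$'' on $\Omega\backslash\sfE$ is unjustified: hypothesis $(1)$ only produces an extension on some ambient neighbourhood of $\omegaup\backslash\sfE$, which may pinch as one approaches $\sfE$, not on all of $\Omega\backslash\sfE$.

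The paper's proof avoids any reassembly and uses a \emph{single} slice. It fixes one complex submanifold $\sfY_{\pct}$ of dimension larger than $n{-}\qq$, transversal to $\sfM$ at $\pct$ and meeting $\sfE$ locally only at $\pct$, and never extends within $\sfY_{\pct}$; instead it works with the ambient extension $\tilde f$ that hypothesis $(1)$ already provides near $\omegaup\backslash\sfE$. The Baire/open-mapping argument from the proof of Theorem~\ref{t6.1} gives a uniform $r>0$ such that the Taylor series of $\tilde f$ centred at the points $\zetaup\in\sfY_{\pct}\cap\partial\omegaup'$ converge on $B(\zetaup,r)$, i.e.\ the ambient derivatives $D^{\alphaup}\tilde f$ satisfy uniform Cauchy estimates there. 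These derivatives restrict to $LACR$ functions on the slice $\sfM\cap\sfY_{\pct}$ (Lemma~\ref{l-2.3}), which is set-theoretically weakly $1$-pseudoconcave by Lemma~\ref{l-1.8}; the maximum modulus principle on this slice transports the Cauchy estimates from $\sfY_{\pct}\cap\partial\omegaup'$ to $\pct$, so the full $n$-variable Taylor series of $\tilde f$ at $\pct$ converges on the ambient ball $B(\pct,r)$ and yields the extension across $\pct$ directly. Replacing your slicewise-extension-plus-reassembly step by this estimate-transport argument closes the proof.
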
 
\begin{proof}
  Clearly  
 $(2)\,{\Rightarrow}\,(4)\,{\Rightarrow}\,(3).$ 
Since $(1)\,{\Leftrightarrow}\,(3)$  by \cite[Thm 1.1]{NP2015},
it suffices to check that $(1)\,{\Rightarrow}\,(2).$ This follows by applying the argument
in the proof of Thmeorem\,\ref{t6.1} to the transversal
intersection of $\sfM$ with complex submanifolds
$\sfY_{\!\pct}$ of dimension larger than $n{-}\qq,$ locally intersecting $\sfE$ at a
single point $\pct$ and transversal to $\sfM$ at $\pct.$ 
The intersection is in fact a smooth real submanifold and 
set-theoretically weakly $1$-pseudoconcave
by Lemma~\ref{l-1.8}. Let us point out the only additions needed here. 
We reduce as before to the case where 
$\sfM$ is a smooth real submanifold of an open subset of $\C^{n}$.
To show that $\cO_{\sfM,1}\,{=}\,\cO_{\sfM,0}$, it suffices to prove that,
for each point $\pct$ of $\sfE$ and open neighbourhood $\omegaup$ 
of $\pct$ in $\sfM$,
functions $f$ in $\cO_{\sfM,0}(\omegaup)$
have a holomorphic extension to a fixed neighbourhood
of $\pct$ in $\C^{n}$. Take a $\sfY_{\!\pct}$ of dimension larger than $n{-}\qq,$ locally intersecting $\sfE$ at 
the single point $\pct$ and transversal to $\sfM$ at $\pct.$ If $\omegaup'$ is an open neighbourhood
of $\pct$ in $\sfM$ with $\omegaup'\,{\Subset}\,\omegaup$, by a 
standard  open mapping theorem argument as above, there is $r\,{>}\,0$
such that, if  $f\,{\in}\,\cO_{\sfM,0}(\omegaup)$ and $\tilde{f}$ is its extension to an open neighbourhood
$\tilde{\omegaup}$ of $\omegaup$ in $\C^{n}$,
then the Taylor series of $\tilde{f},$ 
centred at points
$\zetaup\,{\in}\,\sfY_{\!\pct}\,{\cap}\,\partial\omegaup'$ 
converge in the ball $B(\zetaup,r)$. Since $\sfY_{\!\pct}\,{\cap}\,\sfM$ is set-theoretically weakly
pseudoconcave, by the maximum modulus principle also the Taylor series of $\tilde{f}$
centered at $\pct$ converges on $B(\pct,r)$. This completes the proof.
\end{proof}

\section{Some remarks on $CR$ singularities}\label{S7}
Real linear subspaces of $\C^{n}$
are $CR$ submanifolds. We 
consider the underlying real structure of $\C^{n}$, fixing an isomorphism $\C^{n}\,{\simeq}\,\R^{2n}$ 
and denoting by $\Jd\,{\in}\GL_{2n}(\R)$
the corresponding \emph{complex structure}.\par
Let $\sfE$ be a real linear subspace of $\C^{n}$. Its $CR$
dimension is the dimension of 
its complex subspace $\sfE\,{\cap}\,\Jd(\sfE)$. The sum 
$\sfE\,{+}\Jd(\sfE)$
is also a complex subspace and, by the intersection formula, 
\begin{equation}\label{e7.1}
 \dim_{\C}(\sfE\,{\cap}\,\Jd(\sfE))=\dim_{\R}(\sfE)\,{-}\,\dim_{\C}(\sfE\,{+}\,\Jd(\sfE)).
\end{equation}
If $T$ is a real $(2n){\times}d$ matrix, whose columns generate a $d$-dimensional \mbox{$\R$-linear}
subspace $\sfE$ of $\R^{2n}$, then the real dimension of $\,\sfE{+}\,\Jd(\sfE)\,$ is the rank of
the $(2n){\times}(2d)$ real matrix $(T,\Jd\,T)$. This rank is always ad even integer, greater than 
or equal to $d$. Let us indicate by $\sfE_{T}$ the real subspace of $\R^{2n}$ generated
by the columns of a rank $d$ 
matrix $T$  in $\R^{2n\times{d}}$. Then 
\begin{equation*} \crdim(\sfE_{T})
=d\,{-}\,\tfrac{1}{2}\rank(T,\Jd\,T),\;\;\crcodim(\sfE_{T})=\rank(T,\Jd\,T)\,{-}\,d.
\end{equation*}
\par 
In particular, when $2d\,{\geq}\,n$, the $d$-dimensional real subspaces which are not
$CR$-generic 
are the $\sfE_{T}$'s for which the minor determinants
of rank $2n$ of $(T,\Jd\,T)$, obtained by adding $2n{-}d$ columns of $\Jd\,T$ to $T$, vanish. 
\par\medskip 
The grassmannian 
$\Gr^{\R}_{d}(\C^{n})$ 
of $d$-dimensional real linear subspaces of $\C^{n}$ 
is a $(d{\cdot}(2n{-}d))$-dimensional real
projective manifold. \par
For any pair of nonnegtive integers  
$(m,h)$ with $2m\,{+}\,h\,{=}\,d$, denote by $\Gr_{(m,h)}(\C^{n})$ the set of
real $d$-planes in $\Gr^{\R}_{d}(\C^{n})$ of $CR$-dimension $m$ and $CR$-codimension $h$.
This is a smooth real submanifold of $\Gr^{\R}_{d}(\C^{n})$ of real dimension \;\;
$
 2m(n\,{-}\,m)\,{+}\,h(2n\,{-}\,2m-h)=2(m\,{+}\,h)(n\,{-}\,m)\,{-}\,h^{2}.
$
\par\smallskip
Let $k$ be a positive integer with $2\,{\leq}\,k\,{\leq}n$ and take $d\,{=}\,2n{-}k$. 
Then 
\begin{equation*}
 \Gr_{2n-k}^{\R}(\C^{n})={\bigcup}_{1{\leq}h{\leq}k{\leq}2h}\Gr_{(n-h,2h-k)}(\C^{n})
\end{equation*}
is a real-analytic Whitney stratification of $\Gr^{\R}_{2n-k}(\C^{n})$.\par
The first term is a compact stratum, equal to $\Gr_{(n{-}((k{+}1)/2),1)}(\C^{n})$ for odd $k$ and to
$\Gr_{n{-}(k/2)}(\C^{n})\,{=}\,\Gr_{(n{-}(k/2),0)}(\C^{n})$
when $k$ is even, 
and the last one 
is its open dense 
submanifold $\Gr_{(n-k,k)}(\C^{n})$, 
consisting of $CR$-generic $(2n{-}k)$-dimensional real
planes of $\C^{n}$. For each $h$ we have 
\begin{equation} \begin{cases}
 \dim_{\R}(\Gr_{(n-h,2h-k)}(\C^{n}))=2h\,(n+k-h)-k^{2},\\
 \codim_{\R}(\Gr_{(n-h,2h-k)}(\C^{n}),\Gr^{\R}_{2n-k}(\C^{n}))=2(n-h)(k-h).
 \end{cases}
\end{equation}

 \par\smallskip

 For a smooth $(2n{-}k)$-dimensional real submanifold $\sfM$,
by using the (real) affine
 structure of $\C^{n}$, we may consider 
the Gauss map
\begin{equation}\label{e7.3} \tauup:
 \sfM\ni\pct\longrightarrow{T}_{p}\sfM\,{\in}\,\Gr_{2n-k}^{\R}(\C^{n}).
\end{equation} 
\begin{defn} The smooth real submanifold 
$\sfM$ of $\C^{n}$ has  \emph{$CR$ type
$(m,h)$} at its point $\pct$ if $\mathrm{T}_{\pct}\sfM\,{\in}\,\Gr_{(m,h)}(\C^{n})$. \par
 We introduce the notation 
 \begin{equation}
 \sfM_{(m,h)}=\{\pct\,{\in}\,\sfM\mid \mathrm{T}_{\pct}\sfM\,{\in}\,\Gr_{(m,h)}(\C^{n})\},
\end{equation}
 and  say that $\sfM$ has a
 \emph{transversal $CR$ singularity} at $\pct\,{\in}\,\sfM_{(m,h)}$ if $m{+}h\,{<}\,n$ and 
 its Gauss map \eqref{e7.3}
 is transversal to $\Gr_{(m,h)}(\C^{n})$ at $\pct$. 
\end{defn} 
\begin{rmk}\label{r7.1}
 The notions of $CR$ type and transversal $CR$ singularity are invariant by 
 local biholomorphisms and thus trivially extend to the case of general real smooth submanifolds
 of complex manifolds.
\end{rmk}
\begin{prop} Let $\sfM$ be a smooth $(2n{-}k)$-dimensional real submanifold of $\C^{n}$,
having transversal $CR$ singularities. 
Then 
\begin{itemize}
 \item $\sfM_{(n-k,k)}$ is open and dense in $\sfM$ and is a locally closed generic $CR$
 submanifold of $\C^{n}$;
 \item $\{\sfM_{(n-h,2h-k)}\}$ is a smooth stratification of $\sfM$,
 each $\sfM_{(n-h,2h-k)}$ being a smooth totally real submanifold of codimension 
 $2(n{-}h)(k{-}h)$ in $\sfM$.
\end{itemize}
\end{prop}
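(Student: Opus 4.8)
The plan is to read the entire statement off the Gauss map $\tauup$ of \eqref{e7.3} together with the real-analytic Whitney stratification $\Gr^{\R}_{2n-k}(\C^{n})=\bigcup_{h}\Gr_{(n-h,2h-k)}(\C^{n})$ recorded above. The hypothesis that $\sfM$ carries only transversal $CR$ singularities says precisely that $\tauup$ is transverse to each stratum $\Gr_{(n-h,2h-k)}(\C^{n})$ at every point of $\sfM_{(n-h,2h-k)}=\tauup^{-1}(\Gr_{(n-h,2h-k)}(\C^{n}))$. First I would invoke the preimage theorem for transverse maps: each $\sfM_{(n-h,2h-k)}$ is then a smooth submanifold of $\sfM$ of codimension equal to $\codim_{\R}(\Gr_{(n-h,2h-k)}(\C^{n}),\Gr^{\R}_{2n-k}(\C^{n}))=2(n-h)(k-h)$, and the frontier conditions descend from those of the Whitney stratification of the Grassmannian through the transverse map $\tauup$. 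The open dense stratum $\Gr_{(n-k,k)}(\C^{n})$ (the case $h=k$) pulls back to $\sfM_{(n-k,k)}$, which is thus open in $\sfM$ and is a generic $CR$ submanifold because its points have $CR$ type $(n-k,k)$, i.e. $CR$ codimension equal to the real codimension $k$. Density of $\sfM_{(n-k,k)}$ follows because the remaining strata have strictly positive codimension, so their union, being the closed complement of $\sfM_{(n-k,k)}$, is a finite union of submanifolds with empty interior.

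The substance is the \emph{total reality} of the proper strata, and for this I would make the normal geometry of the Grassmann stratification explicit. Over $\Gr^{\R}_{2n-k}(\C^{n})$ consider the tautological bundle with fibre $E$ and the $\R$-linear bundle map $\Phi_{E}:E\ni v\mapsto \Jd v\ (\mathrm{mod}\ E)\in\C^{n}/E$; its kernel is exactly $E\cap\Jd E$, so $\Gr_{(n-h,2h-k)}(\C^{n})$ is the locus where $\Phi$ has a kernel of dimension $2(n-h)$. By the standard description of the intrinsic derivative of a degeneracy locus, and because $\Phi$ is built from $\Jd$, the conormal geometry is $\C$-linear: one gets a natural identification
\[
N_{E}\,\Gr_{(n-h,2h-k)}(\C^{n})\;\cong\;\overline{\mathrm{Hom}}_{\C}\!\left(E\cap\Jd E,\ \C^{n}/(E+\Jd E)\right),
\]
the conjugate-linear maps between the complex vector spaces $E\cap\Jd E$ (complex dimension $n-h$) and $\C^{n}/(E+\Jd E)$ (complex dimension $k-h$); its real dimension $2(n-h)(k-h)$ recomputes the codimension above.

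Write $\pct_{0}\in\sfM_{(n-h,2h-k)}$, $E_{0}=\mathrm{T}_{\pct_{0}}\sfM$, $H=E_{0}\cap\Jd E_{0}=H_{\pct_{0}}\sfM$ and $Q=\C^{n}/(E_{0}+\Jd E_{0})$. Composing $d\tauup_{\pct_{0}}$ with the projection onto the normal space identifies $\mathrm{T}_{\pct_{0}}\sfM_{(n-h,2h-k)}$ with the kernel of a map $\nu:E_{0}\to\overline{\mathrm{Hom}}_{\C}(H,Q)$; since $d\tauup$ is the symmetric second fundamental form $\mathrm{II}$ of $\sfM$, a short computation gives, for $X\in E_{0}$ and $w\in H$,
\[
\nu(X)(w)=\Jd\,\widehat{\mathrm{II}}(X,w)-\widehat{\mathrm{II}}(X,\Jd w)\in Q,
\]
where $\widehat{\mathrm{II}}$ is $\mathrm{II}$ pushed into $Q$ and $\Jd$ acts on $Q$ because $E_{0}+\Jd E_{0}$ is $\Jd$-invariant. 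Transversality is exactly surjectivity of $\nu$, and $\mathrm{T}_{\pct_{0}}\sfM_{(n-h,2h-k)}=\ker\nu$. Since any complex subspace contained in $\ker\nu$ must already lie in $H$, the locus $\mathrm{T}_{\pct_{0}}\sfM_{(n-h,2h-k)}\cap\Jd\,\mathrm{T}_{\pct_{0}}\sfM_{(n-h,2h-k)}$ is the largest complex subspace of $\ker(\nu|_{H})$; total reality therefore reduces to showing that $\ker(\nu|_{H})$ contains no nonzero complex line, for which I would prove the stronger statement that $\nu|_{H}$ is injective, i.e. that the complex tangent directions of $\sfM$ are transverse to the stratum.

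The hard part is precisely this last injectivity: it does \emph{not} follow from surjectivity of $\nu$ alone, and expresses that a complex tangent direction lying inside $\sfM_{(n-h,2h-k)}$ would force $\Phi$ to degenerate to second order, making $\tauup$ tangent to a deeper stratum $\Gr_{(n-h',2h'-k)}(\C^{n})$ with $h'<h$ and contradicting the transversality hypothesis there. I would establish this by analysing $\nu|_{H\times H}$, which is the $\Jd$-Hermitian (Levi-type) part of $\mathrm{II}$ on the complex tangent space, and showing that a nonzero $\Jd$-invariant subspace of $\ker(\nu|_{H})$ yields a one-parameter family of tangent planes along which the corank of $\Phi$ jumps. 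For the bookkeeping I would reduce to the codimension-two situation by slicing $\sfM$ with generic complex planes, using Lemma~\ref{l-1.8} to keep the intersections well behaved and invoking the transversality criterion of Garrity and Coffman \cite{Gar,Coff} (thereby extending it past codimension two). I expect the verification that this reduction is faithful, namely that transversality of $\tauup$ to $\Gr_{(n-h,2h-k)}(\C^{n})$ is inherited by the sliced Gauss maps, to be the main technical obstacle.
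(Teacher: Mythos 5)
Your first paragraph is, in substance, the paper's entire proof: the authors' argument consists of the single observation that, by transversality of the Gauss map \eqref{e7.3} to the Whitney stratification of $\Gr^{\R}_{2n-k}(\C^{n})$, each $\sfM_{(n-h,2h-k)}=\tauup^{-1}(\Gr_{(n-h,2h-k)}(\C^{n}))$ has in $\sfM$ the same codimension $2(n{-}h)(k{-}h)$ as the corresponding Grassmann stratum; smoothness, the openness and density of the top stratum, genericity, and the frontier conditions are exactly the standard consequences of the preimage theorem that you spell out. Up to that point your argument is correct and coincides with the paper's. Your identification of the conormal space to a stratum with the conjugate-linear maps from $E\cap\Jd E$ to $\C^{n}/(E+\Jd E)$ is also consistent (its real dimension recomputes $2(n{-}h)(k{-}h)$) and is a useful addition the paper does not make explicit.

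The genuine gap is the one you flag yourself: the total reality of the lower strata is never established. You correctly reduce it to the injectivity of $\nu|_{H}$ and correctly note that this does not follow from surjectivity of $\nu$; but the program you sketch for proving it (a complex direction in $\ker\nu$ forcing tangency to a deeper stratum, then slicing and Garrity's criterion) cannot be completed, because the clause it is meant to prove fails under the stated hypotheses. The paper itself furnishes the counterexample a page later: in $\C^{4}$ the quadric $\{\uq^{1}{=}\uq^{2}{=}0,\;\tq\,{=}\,{-}i(\lambdaup_{1}\wq^{1}{+}\lambdaup_{2}\wq^{2})\bar{\zq}\}$ with $\lambdaup_{1},\lambdaup_{2}$ linearly independent over $\R$ has, by the authors' own criterion via the matrix \eqref{e7.12}, transversal $CR$ singularities, and its singular stratum $\sfM_{(1,2)}=\{\wq{=}0,\,\tq{=}0\}$ is a complex line --- here $\nu$ is surjective while $\nu|_{H}\equiv 0$, so $\ker(\nu|_{H})$ is a complex line and total reality fails. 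Thus transversality of the Gauss map yields smooth strata of the asserted codimensions but does not by itself yield total reality; the latter needs an additional hypothesis (in the quadric normal form, the vanishing of $r_{B,C}=\dim_{\C}(\ker B\cap\ker\bar{C})$, which the authors themselves isolate in the following subsection). The paper's one-sentence proof silently omits this point, so you should not expect any completion of your final step to succeed as stated.
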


\begin{proof} Indeed, by the transversality condition, the codimension of
$\sfM_{(n-h,2h-k)}$ in $\sfM$ is the same as the codimension of $\Gr_{(n-h,2h-k)}(\C^{n})$
in $\Gr_{2n-k}^{\R}(\C^{n})$.
\end{proof} 
\begin{rmk}
In particular, if $\sfM$ has transversal $CR$ singularities, then either $\sfM_{(n-k+1,k-2)}$ is empty,
or has dimension 
 \begin{equation*} (2n{-}k)-2(n{-}k{+}1)
=k{-}2,
\end{equation*}
independent of $n$.\par
The integers $h$, with $h{\leq}k{\leq}2h$, for which a smooth $\sfM$ of real dimension $(2n{-}k)$
may carry a transversal $CR$ singularity of type $(n{-}h,2h{-}k)$ must satisfy 
\begin{equation*}\tag{$*$}
 2(n-h)(k-h)\leq{2}n-k.
\end{equation*}
If $n\,{=}\,k$, we obtain the condition that $2(n{-}h)^{2}\,{\leq}\,n$, i.e. that 
\begin{equation*}
 n-\sqrt{(n/2)}\,{\leq}\,\,h\,\,{\leq}\,\,n.
\end{equation*}
When $k\,{<}\,n$, condition $(*)$ is always satisfied if $k\,{\geq}\,2$ and $h\,{=}\,k{-}1$.
In general, for  $h\,{<}\,k\,{<}\,n$ we obtain 
\begin{equation*}
 k-h \leq \frac{2n-k}{2(n-h)}= \frac{n-k}{n-h}+\frac{k}{2(n-h)}<1+\frac{k}{2(n-k)},
\end{equation*}
showing that, when
$2n\,{>}\,3k{+}8$, 
there are at most transversal $CR$ singularities of type $(n{-}k{+}1,k{+}2)$; 
this observation justifies our special focusing  below on
$CR$-sin\-gu\-lar\-i\-ties
of this type. 
\end{rmk}
\begin{prop}\label{p7.3}
 Let $k,n$ be integers with $2{\leq}k{<}n$ and 
 $\sfM$ a $(2n{-}k)$-di\-men\-sional smooth real submanifold of $\C^{n},$
 whose nontransversal $CR$ singularities are isolated.
If $\sfM_{(n-k,k)}$ is weakly  
$\left[\tfrac{k+1}{2}\right]$-pseudoconcave and  has the holomorphic 
extension property, 
then $\sfM$ has the
holomorphic extension property at all points. 
 \end{prop}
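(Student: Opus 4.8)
Set $\qq=\left[\tfrac{k+1}{2}\right]$ and let $\sfE=\sfM\backslash\sfM_{(n-k,k)}$ be the set of $CR$ singular points of $\sfM$; since $\sfM_{(n-k,k)}$ is open, $\sfE$ is relatively closed in $\sfM$ and contains every $CR$ singularity. The plan is to apply Theorem~\ref{t6.7} in its formulation through the transversal complex dimension: its two standing hypotheses are that $\sfM$ be set-theoretically weakly $\qq$-pseudoconcave and that the transversal complex dimension of $\sfE$ be strictly less than $\qq$. Once these are in place, condition~(1) of that theorem is precisely the assumption that $\sfM\backslash\sfE=\sfM_{(n-k,k)}$ enjoys the holomorphic extension property, and the equivalence with~(2) delivers the extension property on all of $\sfM$. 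So the whole argument reduces to checking the two hypotheses.

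To obtain weak $\qq$-pseudoconcavity at a singular point $\pct\in\sfE$, where $\dim_\C\mathrm{T}^{1,0}_\pct\sfM=n-h>n-k$ is strictly larger than at generic points, I would argue by semicontinuity from the dense generic part. Picking generic $\pct_j\to\pct$ and passing to a subsequence, the spaces $\mathrm{T}^{1,0}_{\pct_j}\sfM$, all of the constant dimension $n-k$, converge in the Grassmannian to an $(n-k)$-dimensional subspace $H_\infty\subseteq\mathrm{T}^{1,0}_\pct\sfM$. For fixed local defining functions $\rhoup_1,\dots,\rhoup_k$ and any $c\in\R^k$, the forms $i\,\partial\bar\partial\rhoup_c(\pct_j)|_{\mathrm{T}^{1,0}_{\pct_j}\sfM}$ converge to $i\,\partial\bar\partial\rhoup_c(\pct)|_{H_\infty}$; since the property of having at least $\qq$ nonpositive eigenvalues is closed under limits of Hermitian forms on a space of fixed dimension, Proposition~\ref{p1.7} applied at the $\pct_j$ forces the limit form on $H_\infty$ to share it. A $\qq$-dimensional subspace of $H_\infty$ on which the form is negative semidefinite also sits inside the larger space $\mathrm{T}^{1,0}_\pct\sfM$, so $i\,\partial\bar\partial\rhoup_c(\pct)$ has at least $\qq$ nonpositive eigenvalues on $\mathrm{T}^{1,0}_\pct\sfM$ for every $c$; by Proposition~\ref{p1.7} again, $\sfM$ is set-theoretically weakly $\qq$-pseudoconcave at $\pct$.

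For the second hypothesis I would invoke the structure result for submanifolds with transversal $CR$ singularities of \S\ref{S7}. Outside the discrete set of nontransversal singularities, $\sfE$ is stratified by the totally real submanifolds $\sfM_{(n-h,2h-k)}$, $h<k$, of real dimension $\delta_h=(2n-k)-2(n-h)(k-h)$. By Example~\ref{e6.6}, a totally real submanifold of real dimension $\delta$ has transversal complex dimension $\left[\tfrac{\delta+1}{2}\right]$, and the transversal complex dimension of the union at a given point is governed by the largest-dimensional stratum accumulating there. The dimension $\delta_h$ is maximal at $h=k-1$, where $\delta_{k-1}=k-2$ and the transversal complex dimension equals $\left[\tfrac{k-1}{2}\right]<\left[\tfrac{k+1}{2}\right]=\qq$. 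The isolated nontransversal points, being zero-dimensional, add nothing to this count, and it is exactly their being isolated that keeps the bound valid there as well. Hence the transversal complex dimension of $\sfE$ stays strictly below $\qq$ everywhere.

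With both hypotheses verified, Theorem~\ref{t6.7} propagates the holomorphic extension property from $\sfM_{(n-k,k)}$ to all of $\sfM$. The main obstacle is the first step: the holomorphic tangent space genuinely jumps in dimension along $\sfE$, so one cannot simply restrict a single fixed Levi form, and the upgrade rests on combining the monotonicity of the number of nonpositive eigenvalues under enlargement of the ambient space with the closedness of this numerical condition under the degeneration $\pct_j\to\pct$.
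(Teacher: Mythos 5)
Your reduction to Theorem~\ref{t6.7} is the right general idea, and your semicontinuity argument for upgrading weak $\qq$-pseudoconcavity from the dense generic stratum to the $CR$-singular points is correct (and makes explicit a step the paper leaves implicit). The gap is in your verification of the second hypothesis of Theorem~\ref{t6.7}, namely that the transversal complex dimension of $\sfE=\sfM\setminus\sfM_{(n-k,k)}$ is everywhere less than $\qq=\left[\tfrac{k+1}{2}\right]$. This is fine at the \emph{transversal} singularities, where the stratification by totally real submanifolds of dimension at most $k-2$ applies. But at an isolated \emph{nontransversal} singularity $\pct_0$ the stratification result gives no information, and your claim that such points ``add nothing to the count'' because they are zero-dimensional does not follow: the transversal complex dimension of $\sfE$ at $\pct_0$ is governed not by $\{\pct_0\}$ alone but by how the rest of $\sfE$ accumulates at $\pct_0$, since Definition~\ref{d6.2} requires complex submanifolds $\sfY$ through $\pct_0$ for which $\pct_0$ is \emph{isolated} in $\sfE\cap\sfY$. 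For a set of dimension $k-2$ accumulating at $\pct_0$ from outside, a generic complex plane of codimension $d$ through $\pct_0$ is only guaranteed to avoid it near $\pct_0$ when $2d>k-2$, i.e.\ $d\geq\left[\tfrac{k}{2}\right]$, which equals $\qq$ when $k$ is even; the strict inequality you need then fails. The case $k=2$ is the cleanest instance: there $\qq=1$, and if transversal complex points accumulate at a nontransversal one, that point is not isolated in $\sfE$, so the transversal complex dimension of $\sfE$ there is at least $1=\qq$.

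The paper circumvents exactly this by a two-step argument: it first applies Theorem~\ref{t6.7} only on $\sfM$ minus the discrete set $\sfE$ of \emph{nontransversal} singularities --- there the remaining singular locus is a locally finite union of totally real submanifolds of dimension at most $k-2$ and genuinely has transversal complex dimension $\left[\tfrac{k-1}{2}\right]<\qq$ --- and then absorbs the discrete set $\sfE$ by Theorem~\ref{t6.1}, whose mechanism (uniform Taylor estimates via Baire category combined with the maximum modulus principle) requires no dimension count at the exceptional points, only that they be discrete. To repair your proof you must insert this second step rather than attempting to bound the transversal complex dimension of the full singular locus at the nontransversal points.
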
 
\begin{proof}
 Indeed, if $\sfE$ is the set of nontransversal $CR$ singularities of $\sfM$, then  
 the $CR$-singular locus  of $\sfM{\setminus}\sfE$ is contained in a locally finite union of smooth
 submanifolds of real dimension less or equal to $k{-}2$ and thus of transversal complex
 dimension $\left[\tfrac{k-1}{2}\right]$. The statement follows by Theorems\,\ref{t6.1} and \ref{t6.7}, after
 recalling that, by Proposition\,\ref{p1.7}, the notions of weak $\qq$-pseu\-do\-con\-cavity
 and set-theoretical weak $\qq$-pseudoconcavity coincide for smooth real submanifolds.
\end{proof}
We note that, if in Proposition\,\ref{p7.3} 
we strengthen the assumption from weak to strict pseudoconcavity, 
then  $\sfM$ can only have transversal singularities
of type $(n{-}k{+}1,k{-}2)$. Indeed for the strict $\left[\tfrac{k+1}{2}\right]$-pseu\-do\-con\-cav\-ity 
of $\sfM_{(n{-}k,k)}$
one needs that $\left[\tfrac{k+1}{2}\right]\,{ \leq}\, \tfrac{n-k}{2}$. This implies that $n\,{\geq}\,2k$
and hence $\tfrac{k}{2(n{-}k)}\,{\leq}\,\tfrac{1}{2}$.\par\smallskip
By using invariance with respect to coordinate changes stated in Remark\,\ref{r7.1},
we get from Proposition\,\ref{p7.3}:
\begin{prop}
 Let $\sfM$ be a real submanifold of codimension $k$ of an $n$-dimensional complex manifold  $\sfX$
 and assume that the nontransversal $CR$ singularities of 
 $\sfM$ are isolated
 and that its
 $CR$-regular points form a generic $CR$ submanifold  $\sfM_{(n{-}k,k)}$
 of $\sfX$,
open and dense in $\sfM$, 
 weakly  $\left[\tfrac{k{+}1}{2}\right]$-pseu\-do\-con\-cave and having the holomorphic extension property.
 Then $\sfM$ has the holomorphic extension property. 
 \qed
\end{prop}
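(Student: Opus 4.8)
The plan is to deduce the statement from Proposition~\ref{p7.3} by a purely local argument, exploiting that the holomorphic extension property and all the hypotheses are invariant under holomorphic coordinate changes. The starting observation is that the holomorphic extension property is, by definition, a \emph{pointwise} notion: $\sfM$ enjoys it exactly when it enjoys it at each $\pct\,{\in}\,\sfM$, and this in turn only concerns $LACR$ functions defined on $\sfM$ near $\pct$ together with their extension to a neighbourhood of $\pct$ in $\sfX$. Hence it suffices to fix an arbitrary $\pct\,{\in}\,\sfM$ and establish the property there.

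First I would choose a holomorphic chart $\phiup\colon U\,{\to}\,\phiup(U)\,{\subseteq}\,\C^{n}$ of $\sfX$ with $\pct\,{\in}\,U$, and set $\sfM'\,{=}\,\phiup(\sfM\,{\cap}\,U)$, a smooth real submanifold of codimension $k$ of the open set $\phiup(U)\,{\subseteq}\,\C^{n}$. Since $\phiup$ is a biholomorphism, each hypothesis transfers to $\sfM'$: by Remark~\ref{r7.1} the $CR$ type and the notion of transversal $CR$ singularity are preserved, so the nontransversal $CR$ singularities of $\sfM'$ stay isolated and $\sfM'_{(n{-}k,k)}\,{=}\,\phiup(\sfM_{(n{-}k,k)}\,{\cap}\,U)$ is again open, dense and a generic $CR$ submanifold; by Proposition~\ref{p1.7} the weak $\left[\tfrac{k{+}1}{2}\right]$-pseudoconcavity of $\sfM_{(n{-}k,k)}$ coincides with its set-theoretic counterpart, which is biholomorphically invariant by Lemma~\ref{lem-1.1} and Remark~\ref{rmk-1.2}, and therefore holds for $\sfM'_{(n{-}k,k)}$; finally the holomorphic extension property of $\sfM_{(n{-}k,k)}$ passes to $\sfM'_{(n{-}k,k)}$, because $\phiup$ and $\phiup^{-1}$ carry $LACR$ functions to $LACR$ functions and holomorphic extensions to holomorphic extensions.

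Next I would apply Proposition~\ref{p7.3} to $\sfM'$. Although that Proposition is phrased for submanifolds of $\C^{n}$, its proof is entirely local --- it reduces, via Lemma~\ref{l-1.8} and Theorems~\ref{t6.1} and \ref{t6.7}, to small neighbourhoods of each nontransversal singularity --- so it applies verbatim to $\sfM'$ inside the open set $\phiup(U)$ and delivers the holomorphic extension property of $\sfM'$ at $\phiup(\pct)$. Transporting this conclusion back through $\phiup^{-1}$ gives the holomorphic extension property of $\sfM$ at $\pct$; since $\pct$ was arbitrary, $\sfM$ has the property everywhere.

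The only delicate point --- and where the actual content lies --- is the systematic verification of biholomorphic invariance: one must be certain that \emph{every} ingredient (isolatedness of the nontransversal singularities, genericity and density of the regular stratum, weak $\left[\tfrac{k{+}1}{2}\right]$-pseudoconcavity, and the holomorphic extension property on that stratum) is a genuine local invariant under holomorphic changes of coordinates, so that the hypotheses of Proposition~\ref{p7.3} survive the passage to the chart. All of these invariances are already in place from Remark~\ref{r7.1}, Proposition~\ref{p1.7} and Remark~\ref{rmk-1.2}, so the argument is a clean localization requiring no new estimate.
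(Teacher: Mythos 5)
Your proposal is correct and is essentially the paper's own argument: the paper proves this proposition with the single remark ``By using invariance with respect to coordinate changes stated in Remark~\ref{r7.1}, we get from Proposition~\ref{p7.3}'', i.e.\ exactly the localization to a holomorphic chart plus transfer of all (pointwise, biholomorphically invariant) hypotheses that you spell out. Your more careful itemization of which invariances are needed (Remark~\ref{r7.1}, Proposition~\ref{p1.7}, Remark~\ref{rmk-1.2}) only makes explicit what the paper leaves implicit.
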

\par
\par\medskip 
 To study transversal \label{trasv}
$CR$ singularities of type $(n{-}k{+}1,k{-}2)$, we
 begin by describing an open neighbourhood of
an element of $\Gr_{(n{-}k+1,k{-}2)}(\C^{n})$. \par
Set for simplicity
$m\,{=}\,n{-}k{+}1$, $h\,{=}\,k-2$. Having fixed complex coordinates
$\zq^{1},\hdots,\zq^{m},\wq^{1},\hdots,\wq^{h},\tq$
in $\C^{n}$, we take 
the $(2n{-}k)$ plane 
\begin{equation*}
 \sfE_{0}=\{\wq^{j}{+}\bar{\wq}^{j}=0,\; 1{\leq}j{\leq}h,\;\tq\,{=}\,0\}\in\Gr_{(m,h)}(\C^{n}).
\end{equation*}
\par\noindent
It is  convenient to introduce 
\textit{real coordinates} 
$x,y\,{\in}\,\R^{n}$, $\uq,\vq\,{\in}\,\R^{h}$, $\rr,\sq\,{\in}\,\R$, with 
\begin{equation*}\label{rcoord}
 \zq\,{=}\,x\,{+}\,i\,y,\;\; (\wq^{1},\hdots,\wq^{h})\,{=}\,\uq\,{+}\,i\,\vq,\;\;
 \tq\,{=}\,\rr\,{+}\,i\,\sq.
\end{equation*}
The corresponding  complex structure on 
\smallskip\par \centerline{
$\R^{2n}\,{=}\,\R^{n}_{x}\,{\times}\,\R^{n}_{y}\,{\times}\,
\R^{h}_{\uq}\,{\times}\,\R^{h}_{\vq}\,{\times}\,\R^{1}_{\rr}\,{\times}\,\R^{1}_{\sq}$\;\;  is} 
\begin{equation*}
 \Jd = 
\begin{pmatrix}
 0 & {-}\Id_{m} & 0 & 0 & 0 & 0\\
 \Id_{m}& 0 & 0 & 0 & 0 & 0 \\
 0 & 0 & 0 & {-}\Id_{h} & 0 & 0\\
 0 & 0 & \Id_{h} & 0 & 0 & 0 \\
 0 & 0 & 0 & 0 & 0 & {-}1\;\\
 0 & 0 & 0 & 0 & 1 & 0
\end{pmatrix}.
\end{equation*}
We have $\sfE_{0}\,{=}\,\sfE_{T_{0}}$ with 
 
\begin{equation} \label{e7.4}
T_{0}=
\begin{pmatrix}
 \Id_{m}& 0 & 0\\
 0 & \Id_{m}& 0\\
 0 & 0 & \Id_{h}\\
 0 & 0 &0\\
 0 & 0 &0\\
  0 & 0 &0 
\end{pmatrix}.
\end{equation}
 We can represent a coordinate neighourhood of $T_{0}$ in $\Gr_{2m+h}^{\R}(\C^{n})$ by
 associating to matrices $V_{x},V_{y}\,{\in}\,\R^{h\times{m}}$, $V_{\vq}\,{\in}\,\R^{h\times{h}}$,
 $\Xi_{x},\Xi_{y},E_{x},E_{y}\,\,{\in}\R^{1\times{m}}$, and $\Xi_{\vq},E_{\vq}\,{\in}\,\R^{1\times{h}}$ 
 the $\R$-linear space generated by the columns of the matrix 
\begin{equation*} T=
\begin{pmatrix}
 \Id_{m}& 0 & 0\\
 0 & \Id_{m}& 0\\
 0 & 0 & \Id_{h}\\
 V_{x} & V_{y} &V_{\vq}\\
 \Xi_{x} & \Xi_{y}&\Xi_{\vq}\\
  E_{x} & E_{y} &E_{\vq} 
\end{pmatrix}.
\end{equation*} 
We get 
\begin{equation*}
 (T,\Jd{T})=\begin{pmatrix}
 \Id_{m}& 0 & 0 & 0 & {-}\Id_{m} & 0\\
 0 & \Id_{m}& 0 & \Id_{m} & 0 & 0\\
 0 & 0 & \Id_{h} & {-}V_{x}&{-}V_{y}&{-}V_{\vq}\\
 V_{x} & V_{y} &V_{\vq}& 0 & 0 &\Id_{h}\\
 \Xi_{x} & \Xi_{y}&\Xi_{\vq}&{-}E_{x}&{-}E_{y}&-E_{\vq}\\
  E_{x} & E_{y} &E_{\vq} &\Xi_{x}&\Xi_{y}&\Xi_{\vq}
\end{pmatrix}.
\end{equation*}\par
The matrix obtained by adding to $T$ the last $h$ columns of $\Jd\,T$ has rank $2m{+}2h$
in a neighborhood of  $0$ where $V_{\vq}$ has not the eigenvalue $i$. The rank of $(T,\,\Jd\,{T})$ is
equal to the rank of the matrix 
\begin{equation*} 
\begin{pmatrix}
 \Id_{m}& 0 & 0 & 0 & 0 & 0 \\
 0 & \Id_{m}& 0 & 0 & 0 & 0 \\
 0 & 0 & \Id_{h} & -V_{\vq} & {-}V_{y}&{-}V_{x}\\
 V_{x}&V_{y}&V_{\vq}&\Id_{h}&V_{x}&{-}V_{y}\\
 \Xi_{x}&\Xi_{y}&\Xi_{\vq}&{-}E_{\vq}&\Xi_{x}{-}E_{y}&{-}\Xi_{x}{-}E_{y}\\
 E_{x}&E_{y}&E_{\vq}& \Xi_{\vq} & \Xi_{y}{+}E_{x}& \Xi_{x}{-}E_{y}
\end{pmatrix},
\end{equation*}
i.e. to $2m$ plus the rank of the matrix 
\begin{equation*} 
\begin{pmatrix}
 \Id_{h} & -V_{\vq} & {-}V_{y}&{-}V_{x}\\
V_{\vq}&\Id_{h}&V_{x}&{-}V_{y}\\
\Xi_{\vq}&{-}E_{\vq}&\Xi_{x}{-}E_{y}&{-}\Xi_{x}{-}E_{y}\\
E_{\vq}& \Xi_{\vq} & \Xi_{y}{+}E_{x}& \Xi_{x}{-}E_{y}
\end{pmatrix}.
\end{equation*}
Thus, in the set where $V_{\vq}$ has not the eigenvalue $i$,
the rank of $(T,\,\Jd\,T)$ is  equal to  
$2m{+}2h$ plus the rank of the $2\,{\times}\,2h$ 
matrix 
\begin{equation*}\tag{$*$}
\begin{pmatrix}
 \Xi_{x}-E_{y} & {-}\Xi_{x}{-}E_{y}\\
 \Xi_{y}{+}E_{x}&\Xi_{x}{-}E_{y}
\end{pmatrix}+
\begin{pmatrix}
 \Xi_{\vq}&{-}E_{\vq}\\
 E_{\vq}&\Xi_{\vq}
\end{pmatrix} 
\begin{pmatrix}
 \Id_{h}&{-}V_{\vq}\\
 V_{\vq}&\Id_{h}
\end{pmatrix}^{-1}
\begin{pmatrix}
 V_{y}&V_{x}\\
 {-}V_{x}&V_{y}
\end{pmatrix}.
\end{equation*}
Since we know that $(T,\,\Jd\,T)$ has even rank, this matrix is either $0$ or has rank~$2$.
We note that the second summand in $(*)$ vanishes to the second order at $0$.
Thus the tangent space to $\Gr_{(m,h)}(\C^{n})$ at $0$ is the space where
\begin{equation*} 
\begin{cases}
 E_{y}=\Xi_{x},\\
 E_{x}={-}\Xi_{y},
\end{cases}
\end{equation*}
i.e. 
\begin{equation}\label{e6.4}
 \mathrm{T}_{\sfE_{0}}\Gr_{(m,h)}(\C^{n})\simeq \left.\left\{
 \begin{pmatrix}
  V_{x} & V_{y} &V_{\vq}\\
 \Xi_{x} & \Xi_{y}&\Xi_{\vq}\\
  {-}\Xi_{y} & \Xi_{x} &E_{\vq} 
\end{pmatrix}\right| \, \begin{gathered}
 V_{x},V_{y}\,{\in}\,\R^{h\times{m}},\, V_{\vq}\,{\in}\,\R^{h\times{h}},\\ \Xi_{x},\Xi_{y}\,{\in}\R^{1\times{m}},\,
 \Xi_{\vq},E_{\vq}\,{\in}\,\R^{1\times{h}}
 \end{gathered}\right\}.
\end{equation}
\par\medskip
Consider, for hermitian symmetric $m\,{\times}\,m$ matrices
$H_{1},\hdots,H_{h}$,  complex matrices $B,C\,{\in}\,\C^{m\times{m}}$
and $D\,{\in}\,\C^{m\times{h}}$, with $C$ symmetric,
\begin{equation}\label{e7.7}
 \sfM\,:\, 
\begin{cases}
 \wq^{1}\,{+}\,\bar{\wq}^{1}=\zq^{*}H_{1}\zq,\\
 \hdots\hdots\hdots\hdots\\
  \wq^{h}\,{+}\,\bar{\wq}^{h}=\zq^{*}H_{h}\zq,\\
 \tq=\zq^{*}B\zq+\tfrac{1}{2}\zq^{*}C\,\bar{\zq}+\zq^{*}D\vq.
\end{cases}
\end{equation}
The \textit{quadric} $\sfM$ 
is a smooth real submanifold of dimension $(2n{-}k)$ of $\C^{n}$.
For $B\,{=}\,0$, $C\,{=}\,0$ and $D\,{=}\,0$, 
$\sfM$ is a generic $CR$ submanifold of type $(m,h)$ of the hyperplane
$\{\tq\,{=}\,0\}$.\par 
 If $B$, $C$, $D$ are not all $0$, 
 then $\sfM$ is at general points a generic smooth $CR$ submanifold
of type $(m{-}1,k)$ of $\C^{n}$, its  
$CR$ singularities occurring at points where the space of holomorphic differentials of real functions
vanishing on $\sfM$ have 
rank
 less than $k$, i.e. on its subset $\sfM^{\sharp}$
where
\begin{equation}\label{e7.8a}
\left\{
\begin{aligned}
 d\wq^{1}{-}\zq^{*}H_{1}d\zq,\;\;\hdots,\;d\wq^{h}{-}\zq^{*}H_{h}dz,\;\; d\tq{-}\zq^{*}B\,d\zq{-}\tfrac{i}{2}\zq^{*}D\,d\wq,\\
 (\zq^{*}B^{*}\,{+}\,\zq^{\intercal}C^{*}\,{+}\,\vq^{\intercal}{D}^{*})d\zq+\tfrac{i}{2}z^{\intercal}\bar{D}\,d\wq
\end{aligned}\right.
\end{equation}
are linearly dependent. Set $D\,{=}\,(D_{i,\alpha})_{
\begin{smallmatrix}
 i=1,\hdots,m,\\
 \alpha=1,\hdots,h
\end{smallmatrix}
}$, $H_{\alpha}=(H_{\alpha\;i,j})_{
\begin{smallmatrix}
 i,j=1,\hdots,m,\\
 \alpha=1,\hdots,h\;\;\;
\end{smallmatrix}}$.
Modulo the first $h$ differentials, we have 
\begin{align*}
 \zq^{\intercal}\bar{D}\,d\wq \,&{=}\,{\sum}_{i=1}^{m}{\sum}_{\alpha=1}^{h} \zq^{i}\bar{D}_{i,\alpha}d\wq^{\alpha}\\
 &={\sum}_{i,j,s=1}^{m}{\sum}_{\alpha=1}^{h} \zq^{i}\bar{D}_{i,\alpha} \bar{\zq}^{j}H_{\alpha;j,s}d\zq^{s}.
\end{align*}
We may consider 
$\bar{D}H\,{\coloneqq}\,\left({\sum}_{\alpha=1}^{h}\bar{D}_{i,\alpha}H_{\alpha;j,s}\right)_{i,j,s=1,\hdots,m}$ 
as a tensor two-covariant and one-contravariant, so that its adjoint  defines  a bilinear
map 
\begin{equation*}
 HD^{\intercal}:\C^{m}\times\C^{m}\to\,\C^{m}.
\end{equation*}
The $h{+}2$ differentials of \eqref{e7.8a} are linerarly dependent iff
\begin{equation*}
 \zq^{*}B^{*}+\zq^{\intercal}{C}^{*}+\vq^{\intercal}D^{*}+\tfrac{i}{2} \zq^{\intercal}\bar{D}(\zq^{*}H)=0.
\end{equation*}
With the notation introduced above, 
by taking the adjoint this becomes 
\begin{equation}\label{e7.8}
 \Lambda_{B,C,D}(\zq,\vq)+\tfrac{i}{2}(H\,D^{\intercal})(\bar{\zq},\zq)=0,\end{equation}
where  
$ \Lambda_{B,C,D}$ 
the $\R$-linear map 
\begin{equation}\label{e7.9}
 \Lambda_{B,C,D}:\C^{m}\times\R^{h}\ni(\zq,\vq)\to B\zq+C\bar{\zq}+D\vq\in\C^{m},
\end{equation}
yielding 
\begin{equation}\label{e7.10}
 \sfM^{\sharp}=\{(\zq,\wq,\tq)\,{\in}\,\sfM\,{\mid}\,  
 \Lambda_{B,C,D}(\zq,\vq)+\tfrac{i}{2} 
 (HD^{\intercal})(\bar{\zq},\zq)=0\}.
\end{equation}\par\smallskip
Let us consider first the simpler case where $D\,{=}\,0$.
Any  $\R$-linear endomorphism  of $\C^{m}$ has a unique decomposition as a sum of
its $\C$-linear and its anti-$\C$-linear components. 
Using the real coordinates of page\,\pageref{rcoord}  
we set 
$B\,{=}\,F\,{+}\,iG$,
$C\,{=}\,P\,{+}\,iQ$, with $F,G,P,Q$ real $m\,{\times}\,m$ matrices, $P,Q$ symmetric.
The $\R$-linear map 
\begin{equation}\label{e7.11}
\Lambda_{B,C}:
 \C^{m}\ni\zq \longrightarrow B\,\zq + C\,\bar{\zq}\in\C^{m}
\end{equation}
has matrix representations  
\begin{equation*} 
\begin{pmatrix}
 F+P & Q-G\\
 G+Q & F-P
\end{pmatrix} 
\begin{pmatrix}
 x \\ y
\end{pmatrix},\quad\text{or}\quad 
\begin{pmatrix}
 B & C \\
 \bar{C}&\bar{B} 
\end{pmatrix} 
\begin{pmatrix}
 \zq \\ \bar{\zq}
\end{pmatrix},
\end{equation*}
the rank of the real matrix on the left being half the rank of the complex matrix on the right. Indeed,
if $\left(\begin{smallmatrix} x \\ y
\end{smallmatrix}\right)$ belongs to the kernel of $\left( 
\begin{smallmatrix}
 F{+}P & Q{-}G\\
 G{+}Q&F{-}P
\end{smallmatrix}\right)$, then $\left(\begin{smallmatrix} x{+}iy \\ x{-}i y
\end{smallmatrix}\right)$ belongs to the kernel of 
$\left( 
\begin{smallmatrix}
B & C\\
\bar{C}&\bar{B}
\end{smallmatrix}\right)$. On the other hand, the kernel of~$\left( 
\begin{smallmatrix}
B & C\\
\bar{C}&\bar{B}
\end{smallmatrix}\right)$ is invariant under the 
involution 
$\left(\begin{smallmatrix} \zq \\ \wq
\end{smallmatrix}\right)\,{\mapsto}\,\left(\begin{smallmatrix} \bar{\wq} \\ \bar{\zq}
\end{smallmatrix}\right)$ and the corresponding 
eigendecomposition of $\C^{2m}$ yields a direct sum decomposition of
this kernel into two spaces having equal dimensions.
\par
Being an $\R$-linear subspace, $\ker(\Lambda_{B,C})$ is a $CR$ submanifold of $\C^{m}$.
Let $(r_{B,C},s_{B,C})$ be its $CR$-type: we have \begin{equation*}
r_{B,C}\,{=}\,\dim_{\C}(\ker(B)\,{\cap}\,\ker(\bar{C})), \qquad
 2r_{B,C}\,{+}\,s_{B,C}\,{=}\,\dim_{\R}\ker(\Lambda_{B,C}).
 \end{equation*}
\par In this case
the set $\sfM^{\sharp}$ 
is a smooth $CR$ submanifold of type $(r_{B,C},s_{B,C}{+}h)$ 
 of $\sfM$. It is totally real when $r_{B,C}\,{=}\,0$.
In particular, 
When $\Lambda_{B,C}$ is nonsingular, it is an $h$-dimensional real linear subspace.
In general, we get:
\begin{lem}
 If $\Lambda_{B,C,D}$ has real rank $2m$, then there is an open neighbourhood $U$ of $0$ in $\C^{n}$
 such that 
 $\sfM^{\,\sharp}\,{\cap}\,U$ is a smooth real $h$-dimensional submanifold of
 $\sfM\,{\cap}\,U$.
\end{lem}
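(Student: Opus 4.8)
The plan is to exhibit $\sfM^{\sharp}$ locally as the zero set of a submersion. First I would use the defining equations \eqref{e7.7} to parametrize $\sfM$ by the real coordinates $(\zq,\vq)\in\C^{m}\times\R^{h}$: the first $h$ equations express $\re\wq$ as a function of $\zq$ while leaving $\vq=\im\wq$ free, and the last equation expresses $\tq$ as a function of $(\zq,\vq)$. Thus $\sfM$ is a graph, globally diffeomorphic to $\C^{m}\times\R^{h}\cong\R^{2m+h}$, and it suffices to analyze $\sfM^{\sharp}$ inside this single chart, where being a smooth submanifold of the chart is the same as being a smooth submanifold of $\sfM$.

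In these coordinates, by \eqref{e7.10}, the set $\sfM^{\sharp}$ is exactly the zero locus of the smooth (in fact real-polynomial) map
\[
\Phi:\C^{m}\times\R^{h}\to\C^{m},\qquad \Phi(\zq,\vq)=\Lambda_{B,C,D}(\zq,\vq)+\tfrac{i}{2}(HD^{\intercal})(\bar{\zq},\zq).
\]
The second summand is quadratic in $\zq$ and hence vanishes to second order at the origin, so its differential at $0$ is zero; consequently $d\Phi(0)=\Lambda_{B,C,D}$ as an $\R$-linear map $\R^{2m+h}\to\R^{2m}$, where $\Lambda_{B,C,D}$ is the linear map of \eqref{e7.9}.

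Next I would invoke the rank hypothesis. Since the target $\C^{m}$ has real dimension $2m$, the assumption that $\Lambda_{B,C,D}$ has real rank $2m$ says precisely that $d\Phi(0)$ is surjective, i.e. $\Phi$ is a submersion at $0$. As the rank of $d\Phi$ is lower semicontinuous, $\Phi$ stays a submersion on some open neighbourhood $U$ of $0$, so $0$ is a regular value of $\Phi|_{U}$. By the regular value theorem, $\sfM^{\sharp}\cap U=\Phi^{-1}(0)\cap U$ is a smooth embedded submanifold of $\sfM\cap U$ of dimension $(2m+h)-2m=h$, which is the assertion.

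The argument is essentially routine; the one point requiring care is the bookkeeping of real versus complex ranks. One must keep in mind that $\Lambda_{B,C,D}$ takes values in the $2m$-real-dimensional space $\C^{m}$, so that real rank $2m$ is the maximal possible value and forces the codimension of $\sfM^{\sharp}$ in $\sfM$ to be $2m$, leaving dimension $h$. The only auxiliary verification, namely that the quadratic term $(HD^{\intercal})(\bar{\zq},\zq)$ does not affect the linearization at $0$, is immediate from its vanishing to second order, so no genuine obstacle arises.
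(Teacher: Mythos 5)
Your argument is correct and is essentially the paper's own proof: both apply the implicit function theorem (regular value theorem) to the map $(\zq,\vq)\mapsto\Lambda_{B,C,D}(\zq,\vq)+\tfrac{i}{2}(HD^{\intercal})(\bar{\zq},\zq)$ defining $\sfM^{\sharp}$ in the chart $(\zq,\vq)$, noting that the quadratic term does not affect the differential at $0$, so the rank-$2m$ hypothesis makes $0$ a regular value and the zero set an $h$-dimensional submanifold. Your write-up just spells out the bookkeeping the paper leaves implicit.
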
 
\begin{proof}
This follows from the implicit function theorem applied to the map 
\begin{equation*}
 \C^{m}\times\R^{h}\ni(\zq,\vq)\to \Lambda_{B,C,D}(\zq,\vq)+\tfrac{i}{2}(\zq^{*}H\zq)^{\intercal}D^{\intercal}\bar{\zq}
 \in\C^{m}.
\end{equation*}
Indeed, if its Jacobian at $0$ has rank $2m$, then its zero set is, near the origin,
a smooth $h$-dimensional real submanifold.
\end{proof}
We prove, under this rank condition, a transversality result 
for the Gauss map \eqref{e7.3}, slightly generalizing some   propositions in  \cite{Cof2009,Gar}.
\par\smallskip 

Using the real coordinates of page\,\pageref{rcoord},
we have for the $\sfM$  of \eqref{e7.7} the parametrical description 
\begin{equation*}\tag{$**$}
\begin{cases}
 \uq^{j}=\tfrac{1}{2}z^{*}H_{j}z, \qquad 1{\leq}j{\leq}h,\\
 \rr=\tfrac{1}{2}z^{*}(B\,{+}\,B^{*})z+\tfrac{1}{2}\re(\zq^{*}C\,\bar{\zq})+\re(\zq^{*}D\vq),\\
 \sq=\tfrac{i}{2}z^{*}(B^{*}{-}\,B)z+\tfrac{1}{2}\im(\zq^{*}C\,\bar{\zq})+\im(\zq^{*}D\vq).
 \end{cases}
\end{equation*}
\par
The tangent space to $\sfM$ at $0$ is generated by the columns of the matrix $T_{0}$
of \eqref{e7.4}.
We write $H_{j}\,{=}\,K_{j}\,{+}i\,L_{j}$, with $K_{j}$ real symmetric and $L_{j}$ real antisymmetric.
Then the first lines in $(**)$ can be rewritten in terms of the real coordinates by 
\begin{equation*}
 2\uq^{j}=x^{\intercal}K_{j}x+y^{\intercal}K_{j}y+y^{\intercal}L_{j}x-y^{\intercal}L_{j}x,\;\; 1{\leq}j{\leq}h.
\end{equation*}
\par
Let $D\,{=}\, R\,{+}\,i\,S$, with $R,S\,{\in}\,\R^{m\times{h}}$.
The last two lines are, in terms of real coordinates, 
\begin{equation*} 
\begin{cases}
 \rr=x^{\intercal}(F{+}\tfrac{1}{2}P)x \,{+}\, y^{\intercal}(F{-}\tfrac{1}{2}P)y\,{+}\,x^{\intercal}(\tfrac{1}{2}Q-G)y\,{+}\,y^{\intercal}(G{+}\tfrac{1}{2}Q)x
 \,{+}\,x^{\intercal}R\vq\,{+}\,y^{\intercal}S\vq,\\
 \sq=x^{\intercal}(G{+}\tfrac{1}{2}Q)x\,{+}\,y^{\intercal}(G{-}\tfrac{1}{2}Q)y\,{+}\, x^{\intercal}(F{-}\tfrac{1}{2}P)y\,{-}\,y^{\intercal}(F{+}\tfrac{1}{2}P)x
 \,{+}\,x^{\intercal}S\vq
\,{-}\,y^{\intercal}{R}\vq.\end{cases}
\end{equation*}We use below 
\begin{equation*}
 K= 
\begin{pmatrix}
 K_{1}\\ \vdots \\ K_{h}
\end{pmatrix},\;\;\; L= 
\begin{pmatrix}
 L_{1}\\ \vdots \\ L_{h}
\end{pmatrix}\end{equation*}
The Gauss map $\tauup(x,y,\vq)$ of 
\eqref{e7.3} has the parametric expression 
{\footnotesize
\begin{equation*}
\begin{pmatrix}
 \Id_{m}&0&0\\
 0 &\Id_{m}&0\\
 0&0&\Id_{h}\\
  x^{\intercal}K+y^{\intercal}L & y^{\intercal}K-x^{\intercal}L & 0\\
x^{\intercal}(F{+}F^{\intercal}{+}P)\,{+}\,y^{\intercal}(G{-}G^{\intercal}{+}Q)\,{+}\,\vq^{\intercal}R^{\intercal} &
y^{\intercal}(F{+}F^{\intercal}{-}P)\,{+}\,x^{\intercal}(G^{\intercal}{-}G{+}Q)\,{+}\,\vq^{\intercal}S^{\intercal} &
x^{\intercal}{R}{+}y^{\intercal}{S}\\
x^{\intercal}(G{+}G^{\intercal}{+}Q)\,{+}\,y^{\intercal}(F^{\intercal}{-}F{-}P)\,{+}\,\vq^{\intercal}S^{\intercal} &
y^{\intercal}(G{+}G^{\intercal}{-}Q)\,{+}\,x^{\intercal}(F{-}F^{\intercal}{-}P)\,{-}\,\vq^{\intercal}R^{\intercal} &
x^{\intercal} S{-}y^{\intercal}R
\end{pmatrix}.
\end{equation*}}
 Being linear in the parameters $x$, $y$, $\vq$,
 its tangent at $0$ is, in the local coordinates
 corresponding to the last $k$ lines of the matrix above: 
 {\footnotesize  \
\begin{equation*} 
\begin{pmatrix}
   x^{\intercal}K+y^{\intercal}L & y^{\intercal}K-x^{\intercal}L & 0\\
x^{\intercal}(F{+}F^{\intercal}{+}P)\,{+}\,y^{\intercal}(G{-}G^{\intercal}{+}Q)\,{+}\,\vq^{\intercal}R^{\intercal} &
y^{\intercal}(F{+}F^{\intercal}{-}P)\,{+}\,x^{\intercal}(G^{\intercal}{-}G{+}Q)\,{+}\,\vq^{\intercal}S^{\intercal} &
x^{\intercal}{R}{+}y^{\intercal}{S}\\
x^{\intercal}(G{+}G^{\intercal}{+}Q)\,{+}\,y^{\intercal}(F^{\intercal}{-}F{-}P)\,{+}\,\vq^{\intercal}S^{\intercal} &
y^{\intercal}(G{+}G^{\intercal}{-}Q)\,{+}\,x^{\intercal}(F{-}F^{\intercal}{-}P)\,{-}\,\vq^{\intercal}R^{\intercal} &
x^{\intercal} S{-}y^{\intercal}R
\end{pmatrix}.
\end{equation*}
}
\par
In the description \eqref{e6.4} of $\mathrm{T}_{T_{0}}\Gr_{(m,h)}(\C^{n})$,
$V_{x},V_{y},V_{\vq},\Xi_{\vq},E_{\vq}$ can be arbitrarily chosen.
Thus the condition for the Gauss map of $\sfM$ to be transversal to $\Gr_{(m.h)}(\C^{n})$ at $0$ 
reads off as the fact that $\R^{2\times(2m)}$ is sum of the subspaces {\footnotesize
\begin{align*}
&\left.\left\{ 
\begin{pmatrix}
 \Xi_{x} & \Xi_{y}\\
 {-}\Xi_{y}&\Xi_{x}
\end{pmatrix}\right| \Xi_{x},\Xi_{y}\,{\in}\,\R^{1\times{m}}\right\}\qquad\text{and}\\
&  
\left.\left\{
\begin{pmatrix}
x^{\intercal}(F{+}F^{\intercal}{+}P)\,{+}\,y^{\intercal}(G{-}G^{\intercal}{+}Q)\,{+}\,\vq^{\intercal}R^{\intercal} &
y^{\intercal}(F{+}F^{\intercal}{-}P)\,{+}\,x^{\intercal}(G^{\intercal}{-}G{+}Q)\,{+}\,\vq^{\intercal}S^{\intercal} \\[4pt]
x^{\intercal}(G{+}G^{\intercal}{+}Q)\,{+}\,y^{\intercal}(F^{\intercal}{-}F{-}P)\,{+}\,\vq^{\intercal}S^{\intercal} &
y^{\intercal}(G{+}G^{\intercal}{-}Q)\,{+}\,x^{\intercal}(F{-}F^{\intercal}{-}P)\,{-}\,\vq^{\intercal}R^{\intercal} 
\end{pmatrix}
\right| \begin{gathered}
x,y\,{\in}\,\R^{n}\\[-5pt]
\vq\,{\in}\,\R^{h}\end{gathered}\right\}.
\end{align*}}
We can sum to all elements of the second subspace  an element of the first one to obtain a matrix
with zero second row. Thus the condition for transversality is that 
{\footnotesize
\begin{equation*}
 \{(x^{\intercal}(F^{\intercal}{+}P)\,{+}\,y^{\intercal}(Q{-}G^{\intercal})\,{+}\,\vq^{\intercal}R^{\intercal},\;
 x^{\intercal}(G^{\intercal}{+}Q)\,{+}\,y^{\intercal}(F^{\intercal}{-}P)\,{+}\,\vq^{\intercal}S^{\intercal}\mid
 x,y\,{\in}\,\R^{n},\,\vq\,{\in}\,\R^{h}\}\,{=}\,\R^{1\times(2m)}.
\end{equation*}
}
Thus the transversality condition is that 
the real matrix 
\begin{equation} \label{e7.12} [\Lambda_{B,C,D}]=
\begin{pmatrix}
 F+P & Q{-}G & R\\
 G+Q & F-P & S
\end{pmatrix}
\end{equation}
has rank $2m$.
We conclude with
\begin{prop} \label{p7.6}
Let $\sfM$ be the quadric of \eqref{e7.7}.
A necessary and sufficient condition in order that the Gauss map \eqref{e7.3}
be transversal to $\Gr_{(m,h)}(\C^{n})$ at $0$ is that the matrix $[\Lambda_{B,C,D}]$ of 
\eqref{e7.12}
has rank $2m$. \qed
\end{prop}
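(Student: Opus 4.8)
The plan is to turn the pointwise transversality of the Gauss map into a linear-algebra statement read off in the affine chart of $\Gr^{\R}_{2n-k}(\C^n)$ around $T_0$ introduced above, whose coordinates are the matrices $V_x,V_y,V_{\vq},\Xi_x,\Xi_y,\Xi_{\vq},E_x,E_y,E_{\vq}$. By definition, transversality of the Gauss map $\tauup$ of \eqref{e7.3} to $\Gr_{(m,h)}(\C^n)$ at $0$ means
\[
d\tauup_{0}\big(\mathrm{T}_{0}\sfM\big)+\mathrm{T}_{T_0}\Gr_{(m,h)}(\C^n)=\mathrm{T}_{T_0}\Gr^{\R}_{2n-k}(\C^n).
\]
First I would record the tangent space of the stratum, which is exactly the subspace \eqref{e6.4} cut out by $E_y=\Xi_x$ and $E_x=-\Xi_y$, leaving $V_x,V_y,V_{\vq},\Xi_{\vq},E_{\vq}$ free. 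Since these free coordinates already lie in the stratum, they carry no transversality obstruction, so the whole problem descends to the quotient spanned by the four remaining blocks $\Xi_x,\Xi_y,E_x,E_y$, that is, to $\R^{2\times(2m)}$.

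Next I would compute $d\tauup_{0}$. Differentiating at the origin the parametric description of $\sfM$ obtained by writing \eqref{e7.7} in the real coordinates yields the explicit tangent matrix displayed above; its $V$-block is supported on the free coordinates and may be discarded, while its last two rows give the image of $d\tauup_0$ inside $\R^{2\times(2m)}$. Transversality then becomes the assertion that this image together with the subspace $\left\{\left(\begin{smallmatrix}\Xi_x&\Xi_y\\-\Xi_y&\Xi_x\end{smallmatrix}\right)\mid \Xi_x,\Xi_y\in\R^{1\times m}\right\}$, the trace of \eqref{e6.4} in the quotient, spans all of $\R^{2\times(2m)}$.

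Finally I would carry out the reduction. Adding a suitable element of the latter subspace annihilates the second row of any element of the image of $d\tauup_0$, and collecting the coefficients of $x,y,\vq$ in the surviving first row gives, up to a harmless factor $2$,
\[
\big(x^{\intercal}(F^{\intercal}{+}P)+y^{\intercal}(Q{-}G^{\intercal})+\vq^{\intercal}R^{\intercal},\;
x^{\intercal}(G^{\intercal}{+}Q)+y^{\intercal}(F^{\intercal}{-}P)+\vq^{\intercal}S^{\intercal}\big).
\]
This row runs over all of $\R^{1\times(2m)}$ as $(x,y,\vq)$ vary if and only if the coefficient matrix has full column rank $2m$; since that matrix is the transpose of $[\Lambda_{B,C,D}]$ in \eqref{e7.12} and rank is invariant under transposition, the condition is exactly $\rank[\Lambda_{B,C,D}]=2m$. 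I expect the main obstacle to be the bookkeeping in the two middle steps — differentiating the parametric equations correctly, arranging the tangent matrix so that the free directions drop out cleanly, and carrying out the row reduction so that the residual surjectivity statement lands precisely on $[\Lambda_{B,C,D}]$ rather than on some rearrangement of its entries.
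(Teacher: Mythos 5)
Your proposal is correct and follows essentially the same route as the paper: the paper likewise identifies $\mathrm{T}_{T_0}\Gr_{(m,h)}(\C^n)$ as the subspace \eqref{e6.4}, discards the free blocks $V_x,V_y,V_{\vq},\Xi_{\vq},E_{\vq}$, computes the linear (hence easily differentiated) parametric expression of the Gauss map, kills the second row by adding an element of $\bigl\{\bigl(\begin{smallmatrix}\Xi_x&\Xi_y\\-\Xi_y&\Xi_x\end{smallmatrix}\bigr)\bigr\}$, and reads off surjectivity onto $\R^{1\times(2m)}$ as the rank condition on $[\Lambda_{B,C,D}]$. Your observation that the resulting coefficient matrix is the transpose of \eqref{e7.12} (using the symmetry of $P$ and $Q$) is the same bookkeeping the paper performs implicitly, so there is nothing to add.
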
 
\begin{rmk}
 This holds true, in particular, when
$\Lambda_{B,C}$ is nonsingular.
\par
 When $h\,{=}\,0$, the condition reduces to the fact that the $\R$-linear map $\Lambda_{B,C}$ of 
 \eqref{e7.11} is nonsingular and
 we recover the transversality criterion of \cite{Cof2009,Gar}.
\end{rmk}
\begin{exam}
Let $\zq,\wq,\tq$ be holomorphic coordinates in 
 $\C^{3}$ and consider, for $\lambdaup\,{\in}\,\C$, 
\begin{equation*} \sfM_{\lambdaup}\,{:}\;
\begin{cases}
 \wq+\bar{\wq}=0,\\
 \tq=\zq\bar{\zq}\,{+}\,\tfrac{1}{2}\bar{\zq}^{2}\,{+}\,\lambdaup\,{\wq}\bar{\zq}.
\end{cases}
\end{equation*}
The $CR$ singularities of $\sfM_{\lambdaup}$ occur at points where the differentials 
\begin{equation*}
 d\wq,\;\;\; d\tq\,{-}\,\bar{\zq}\,d\zq \,{-}\,\lambdaup\,\bar{\zq}\,d\wq,\;\;\; 
 (\zq\,{+}\,\bar{\zq}\,{+}\bar{\,\lambdaup}\,\bar{\wq})d\zq
\end{equation*}
are linearly dependent. Thus we obtain that 
\begin{align*}
\sfM_{\lambdaup}^{\sharp}&={\sfM_{\lambdaup}}_{(1,1)}\,{=}\,\{(\zq,\wq,\tq)\in\sfM_{\lambdaup}\mid
\zq{+}\bar{\zq}{-}i\bar{\,\lambdaup}\,\im(\wq)\,{=}\,0\}
\end{align*}
is a smooth submanifold of real dimension $1$ if $\re(\lambdaup)\,{\neq}\,0$ and $2$ when
$\re(\lambdaup)\,{=}\,0$. 
The associated matrix \eqref{e7.12} is 
\begin{equation*} 
\begin{pmatrix}
 1 & 0 & {-}\im(\lambdaup)\\
 0 & 0 & \; \re(\lambdaup)
\end{pmatrix}
\end{equation*}
and, according to Proposition\,\ref{p7.6}, the $CR$ singularity at $0$ is transversal iff
$\re(\lambdaup)\,{\neq}\,0.$ 
\par\smallskip
Consider next 
\begin{equation*} \sfM'\,{:}\;
\begin{cases}
 \wq+\bar{\wq}=0,\\
 \tq={\wq}\bar{\zq}.
\end{cases}
\end{equation*}
The $CR$ singularities of $\sfM'$ occur at points where the differentials 
\begin{equation*}
 d\wq,\; d\tq\,{-}\,\bar{z}\,d\wq,\; \bar{\wq}\,d\zq
\end{equation*}
are linearly dependent: we obtain 
\begin{equation*}
{\sfM'}^{\sharp}=\sfM'_{(1,1)}\,{=}\,\{\wq\,{=}\,0,\;\tq\,{=}\,0\}.
\end{equation*}The associated matrix \eqref{e7.12} is, in this case, 
$\left(  \begin{smallmatrix}
 0 & 0 & i \\
 0 & 0 & 0
\end{smallmatrix}\right)$: the $CR$ singularity at $0$ is not transversal and in fact 
the real dimension ($2$) of ${\sfM'}^{\sharp}$ is larger than in the transversal
case.
\end{exam} 
\begin{exam}
 Let $\zq,\wq^{1},\wq^{2},\tq$ be holomorphic coordinates in $\C^{4}$. Let
 $\uq^{j}\,{=}\,\re(\wq^{j}),$ $\vq^{j}\,{=}\,\im(\wq^{j})$ ($j\,{=}\,1,2$) and 
\begin{equation*}
 \sfM\,:\, 
\begin{cases}
 \uq^{1}=0,\\
 \uq^{2}=0,\\
 \tq=(\lambdaup_{1}\vq^{1}{+}\lambdaup_{2}\vq^{2})\bar{\zq}
 ={-}i(\lambdaup_{1}\wq^{1}{+}\lambdaup_{2}\wq^{2})\bar{\zq}.
\end{cases}
\end{equation*} 
This is a smooth submanifold of real codimension $4$ of $\C^{4}$.
Its $CR$ singularities occur at points where 
\begin{gather*}
 d\wq^{1},\;
 d\wq^{2},\;
 d\tq+i\,\bar{\zq}(\lambdaup_{1}d\wq^{1}{+}\lambdaup_{2}d\wq^{2}),\;\;
 (\bar{\,\lambdaup}_{1}\bar{\wq}^{1}{+}\bar{\,\lambdaup}_{2}\bar{\wq}^{2})d\zq,
\end{gather*}
are linearly dependent.  Hence 
\begin{equation*}
 \sfM^{\sharp}\,{=}\,\sfM_{1,2}=\{(\zq,\wq,\tq)\,{\in}\,\sfM\mid \lambdaup_{1}\vq_{1}{+}\lambdaup_{2}\vq_{2}\,{=}\,0\}. 
\end{equation*}
When $\lambdaup_{1},\lambdaup_{2}$ are linearly independent over $\R$, the $CR$ singularities are
transversal and $\sfM\,{=}\,\{\wq\,{=}\,0,\, \tq\,{=}\,0\}$ is a complex line (real dimension two).
When $\lambdaup_{1},\lambdaup_{2}$ generate a real line, then 
the $CR$ singularity is not transversal and in fact 
$\sfM^{\sharp}$ is a $3$-dimensional
real  subspace of type $(1,1)$ of $\C^{4}$.
\end{exam}
\par\smallskip
By the implicit function theorem and some easy change of holomorphic coordinates  we obtain 
\begin{lem}\label{l7.9}
 Let $k,n,p$ be integers with $2{\leq}k{\leq}n$, $0{\leq}2p{\leq}k$ 
 and $\sfM$ a smooth $(2n{-}k)$-dimensional real smooth
 submanifold of an $n$-dimensional complex manifold $\sfX$. 
 Assume that $\pct\,{\in}\,\sfM$ and 
 $\mathrm{T}_{\pct}\sfM\,{\in}\,\Gr_{(n-k+p,k-2p)}(\mathrm{T}_{\pct}\sfX)$.
 Set $h\,{=}\,k{-}2p$,
 $m\,{=}\,n{-}h{-}p$. Then we can choose real coordinates $x,y\,{\in}\,\R^{m}$, $\uq,\vq\,{\in}\,\R^{h}$,
 $\rr,\sq\,{\in}\,\R^{p}$ such that 
\begin{equation*}
 \zq\,{=}\,x\,{+}\,i\,y\,{\in}\,\C^{m},\;\; \wq\,{=}\,\uq\,{+}\,i\,\vq\,{\in}\,\C^{h},\;\; \tq\,{=}\,\rr\,{+}\,i\,\sq\,{\in}\,\C^{p}
\end{equation*}
are holomorphic coordinates centred at $\pct$ and locally
\begin{equation}\label{e7.13}
 M\,{:}\, 
\begin{cases}
 \uq^{j}=\tfrac{1}{2}\zq^{*}H_{j}\zq + 0(3),& 1\leq{j}\leq{h},\\
 \tq^{\iota}=\zq^{*}B_{\iota}\zq+\tfrac{1}{2}\zq^{*}C_{\iota}\bar{\zq}+\zq^{*}D_{\iota}\vq + 0(3), & 1\leq{\iota}\leq{p},
\end{cases}
\end{equation}\label{e7.14}
where $H_{j},B_{\iota},C_{\iota}\,{\in}\,\C^{m\times{m}},$ $D_{\iota}\,{\in}\,\C^{m\times{h}}$,
with $C_{\iota}$ symmetric and $H_{j}$ hermitian symmetric.\qed
\end{lem}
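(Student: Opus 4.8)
The plan is a standard reduction to normal form, relying only on linear algebra, the implicit function theorem, and quadratic holomorphic coordinate changes, since the statement concerns only the $2$-jet of $\sfM$ at $\pct$.

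First I would put the tangent space in standard position. Since $\mathrm{T}_\pct\sfM$ is a real subspace of $\mathrm{T}_\pct\sfX\simeq\C^n$ with $\crdim = m = n-k+p$ and $\crcodim = h = k-2p$, the intersection formula \eqref{e7.1} gives $\dim_\C(\mathrm{T}_\pct\sfM + \Jd\,\mathrm{T}_\pct\sfM) = m+h$, whence its $\Jd$-orthogonal complement is a complex $p$-plane. Choosing a $\C$-basis of $\mathrm{T}_\pct\sfM\cap\Jd\,\mathrm{T}_\pct\sfM$, a real basis of a complement inside $\mathrm{T}_\pct\sfM$, and a basis of the complementary $p$-plane, I obtain complex affine coordinates $(\zq,\wq,\tq)$ centred at $\pct$ in which $\mathrm{T}_\pct\sfM = \{\re\wq = 0,\ \tq = 0\}$, the tangent space at $0$ of the model quadric \eqref{e7.7}. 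With $\mathrm{T}_\pct\sfM$ horizontal, the projection to the tangential coordinates $(\zq,\vq)$ is a local diffeomorphism, so the implicit function theorem presents $\sfM$ near $\pct$ as a graph $\uq^j = f^j(\zq,\vq)$, $\tq^\iota = g^\iota(\zq,\vq)$, with $f^j$ real and $g^\iota$ complex valued, all vanishing to second order (as the graph of $df|_\pct,dg|_\pct$ must be the horizontal space). On $\sfM$ one has $\wq = i\vq + O(2)$ and $\bar\wq = -i\vq + O(2)$, so modulo $O(3)$ every quadratic term is a function of $(\zq,\bar\zq,\vq)$ alone.

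Next I would normalize the quadratic parts by quadratic holomorphic changes $\wq^j\mapsto\wq^j-\ell^j(\zq,\wq,\tq)$ and $\tq^\iota\mapsto\tq^\iota-h^\iota(\zq,\wq,\tq)$, with $\ell^j,h^\iota$ homogeneous of degree $2$, so that $\pct=0$ and $\mathrm{T}_\pct\sfM$ are preserved. On $\sfM$, where $\tq = O(2)$ and $\wq = i\vq + O(2)$, only the $(\zq,\wq)$-quadratic part of such a polynomial survives modulo $O(3)$, and setting $\wq = i\vq$ turns the monomial types $\zq\zq$, $\zq\wq$, $\wq\wq$ into $\zq\zq$, $i\zq\vq$, $-\vq\vq$ with free complex coefficients. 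For $g^\iota$ this lets me cancel exactly the $\zq\zq$, $\zq\vq$ and $\vq\vq$ components, leaving the $\bar\zq\zq$, $\bar\zq\bar\zq$ and $\bar\zq\vq$ components, i.e. $\zq^* B_\iota \zq + \tfrac12 \zq^* C_\iota\bar\zq + \zq^* D_\iota\vq$, where the $\bar\zq\bar\zq$-coefficient may be taken symmetric, giving $C_\iota$ symmetric. For the real equations $\uq^j = \re\wq^j$ I subtract $\re\ell^j|_{\wq=i\vq}$; a short computation shows that, as $\ell^j$ ranges over quadratics, $\re\ell^j|_{\wq=i\vq}$ ranges over all real quadratics of the three types (real part of a holomorphic $\zq$-quadratic, real $\zq\vq$-form, real $\vq\vq$-form), so I can cancel everything except the hermitian part $\zq^*H'_j\zq = \tfrac12\zq^*H_j\zq$, which is automatically real with $H_j$ hermitian.

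The one point needing care is that these changes interact only through the tangential coordinate: altering $\wq^j$ changes $\vq^j = \im\wq^j$ by an $O(2)$ amount, hence re-expressing the remaining, already quadratic, equations in the new $\vq$ alters them only by $O(3)$. Thus the quadratic normalizations decouple and may be carried out simultaneously, yielding \eqref{e7.13}. A dimension count, matching the coefficients of the $\zq\zq$, $\zq\vq$, $\vq\vq$ monomials against those of $\ell^j$ and $h^\iota$, confirms that each unwanted monomial is indeed removable; this bookkeeping, together with the real-part computation for the $\uq^j$ equations, is the only nonroutine part of the argument.
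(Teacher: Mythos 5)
Your proof is correct and follows exactly the route the paper indicates --- the paper offers only the one-line remark ``by the implicit function theorem and some easy change of holomorphic coordinates'' --- namely, graphing $\sfM$ over its tangent space $\{\re\wq=0,\,\tq=0\}$ via the implicit function theorem and then absorbing the removable quadratic monomials by quadratic holomorphic changes of the $\wq$- and $\tq$-coordinates. Your bookkeeping of which monomial types ($\zq\zq$, $\zq\vq$, $\vq\vq$ as opposed to $\bar\zq\zq$, $\bar\zq\bar\zq$, $\bar\zq\vq$, and for the real equations everything except the Hermitian part) can be cancelled, together with the observation that the induced $O(2)$ change in $\vq$ only perturbs the remaining quadratic terms by $O(3)$, is precisely the content hidden behind the paper's ``easy change of coordinates.''
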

If $\sfM$ is described by \eqref{e7.13}, we set 
\begin{equation}\label{e7.15}
 H_{h+2\iota-1}= \tfrac{1}{2}(B_{\iota}\,{+}\,B^{*}_{\iota}),\;\; H_{h+2\iota}=\tfrac{i}{2}(B_{\iota}^{*}{-}B_{\iota}),\;\;
 1{\leq}\iota{\leq}p.
\end{equation}
and, for $\vec{c}\,{=}\,(c^{1},\hdots,c^{k})\,{\in}\,\R^{k},$
\begin{equation}
 H_{\vec{c}}={\sum}_{j=1}^{k}c^{j}H_{j},\;\;\;\text{for}\,\; \vec{c}\,{=}\,(c^{1},\hdots,c^{k})\in\R^{k}.
\end{equation}
\begin{rmk} Using the notation of \eqref{e7.9} and \eqref{e7.14}, \eqref{e7.15},
we note that 
\begin{itemize}
 \item the ranks of the maps in the 
 $\C$-linear span of $\Lambda_{B_{1},C_{1},D_{1}},\hdots
 \Lambda_{B_{p},C_{p},D_{p}}$,
 \item the signatures of the hermitian symmetric 
 matrices in the $\R$-linear span of  $H_{1},\hdots,H_{k}$ 
\end{itemize}
are biholomorphic invariants of $\sfM$ at $\pct$.
\end{rmk}
\begin{exam} We consider next examples with $p\,{=}\,2$ and $h\,{=}\,0$.
Let $m\,{\geq}2$ 
be a positive integer and consider, for $m\,{\times}\,m$ complex matrices
$B_{1},B_{2}$, 
the  \textit{quadric} of 
$\C^{2+m}\,{=}\,\C^{2}_{\wq}\,{\times}\,\C^{m}_{\zq}$
\begin{equation*}
 \sfM\,:\, 
\begin{cases}
 \wq^{1}=\zq^{*}B_{1}\zq,\\
 \wq^{2}=\zq^{*}B_{2}\zq.
\end{cases}
\end{equation*}
This is a $2m$-dimensional smooth real submanifold of $\C^{2+m}$.
\par 
If $B_{1}$ and $B_{2}$ are linearly independent, then
 the $CR$-regular points form a $CR$ manifold
$\sfM_{m-2,4}$ which is open and dense in $\sfM$  and  
\begin{equation*}
 \sfM=\sfM_{(m-2,4)}\cup\sfM_{(m-1,2)}\cup\sfM_{m,0},
\end{equation*}
its $CR$ singularities occurring at points where the four differentials 
\begin{equation*}
 d\wq^{1}-\zq^{*}B_{1}d\zq,\;\; d\wq^{2}-\zq^{*}B_{2}d\zq,\;\; z^{*}B_{1}^{*}d\zq,\;\; \zq^{*}B_{2}^{*}d\zq
\end{equation*}
are linearly dependent:  $\sfM^{\sharp}\,{=}\, \sfM_{(m-1,2)}\,{\cup}\,\sfM_{m,0}$, with 
\begin{equation*} 
\begin{cases}
 \sfM_{(m,0)}=\{(\wq,\zq)\,{\in}\,\sfM\mid B_{1}\zq\,{=}\,0,\; B_{2}\zq\,{=}\,0\},\\
 \sfM_{(m-1,2)}=\{(\wq,\zq)\,{\in}\,\sfM\,{\setminus}\sfM_{(m,0)}\mid 
 \exists\,(\lambdaup^{1},\lambdaup^{2})\,{\in}\,\C^{2}{\setminus}\{0\} \;
 \text{s.t.}\; \lambdaup^{1}B_{1}\zq\,{+}\,\lambdaup^{2}B_{2}\zq\,{=}\,0\}.
  \end{cases}
\end{equation*}
If $\ker(B_{1})\,{\cap}\,\ker(B_{2})\,{=}\,\{0\}$, then $\sfM_{(m,0)}\,{=}\,\{0\}$. 
Let us assume, in addition, that $B_{1}$ is nonsingular. 
The points $(\wq,\zq)$ of $\sfM_{(m-1,2)}$ are those for which $\zq$ is a nonzero
eigenvector of $B_{1}^{-1}B_{2}$. If the geometric multiplicity of all eigenvalues
of $B_{1}^{-1}B_{2}$ is one, then $\sfM_{(m-1,2)}$ is a smooth totally real submanifold
of dimension $2$ of $\sfM$. \par\smallskip 
Let us consider a specific \textit{singular} example, with $m\,{=}\,3$ and 
\begin{equation*}
 B_{1}= 
\begin{pmatrix}
 1 & 0 & 0\\
 0 & 0 & 1\\
 0 & 0 & 0
\end{pmatrix},\;\;  B_{2}= 
\begin{pmatrix}
 0 & 1 & 0\\
 0 & 0 & 0\\
 0 & 0 & 1
\end{pmatrix}.
\end{equation*}
In this case we obtain 
\begin{equation*}
 \big(\lambdaup_{1}B_{1}+\lambdaup_{2}B_{2} \big)
\begin{pmatrix}
 \lambdaup_{2}\\ {-}\lambdaup_{1}\;\,\\ 0
\end{pmatrix}=0,\;\;\;\forall\lambdaup_{1},\lambdaup_{2}\,{\in}\,\C,
\end{equation*}
and therefore $\sfM_{(2,2)}\,{=}\,\{(\wq,\zq)\,{\in}\,\sfM\,{\mid}\,\zq^{3}{=}\,0\}$ is a smooth submanifold
of $\sfM$ of real dimension $4$.
\par\medskip
Let us consider next the more general \textit{quadric} of 
$\C^{2+m}\,{=}\,\C^{2}_{\wq}\,{\times}\,\C^{m}_{\zq}$
\begin{equation*}
 \sfM\,:\, 
\begin{cases}
 \wq^{1}=\zq^{*}B_{1}\zq+\tfrac{1}{2}\zq^{*}C_{1}\bar{\zq},\\
 \wq^{2}=\zq^{*}B_{2}\zq+\tfrac{1}{2}\zq^{*}C_{2}\bar{\zq}.
\end{cases}
\end{equation*}
where $m{\geq}2$ and $B_{1},B_{2},C_{1},C_{2}$
are $m\,{\times}\,m$ complex matrices,
with $C_{1}$ and $C_{2}$ symmetric.  
This is a $2m$-dimensional smooth real submanifold of $\C^{2+m}$. We keep the notation
$\Lambda_{B_{i},C_{i}}$ for the $\R$-linear maps $\C^{m}\,{\ni}\,\zq\,{\to}\,B_{i}\zq\,{+}\,C_{i}\bar{\zq}\,{\in}\,\C^{m}$
($i=1,2$) 
introduced at page\,\pageref{e7.11}. 
If $\Lambda_{B_{1},C_{1}}$ and $\Lambda_{B_{2},C_{2}}$ are not proportional, 
 the $CR$-regular points form a $CR$ manifold
$\sfM_{m-2,4}$ which is open and dense in $\sfM$  and  
\begin{equation*}
 \sfM=\sfM_{(m-2,4)}\cup\sfM_{(m-1,2)}\cup\sfM_{m,0},
\end{equation*}
its $CR$ singularities occurring at points where the four differentials 
\begin{equation*}
 d\wq^{1}-\zq^{*}B_{1}d\zq,\;\; d\wq^{2}-\zq^{*}B_{2}d\zq,\;\; (z^{*}B_{1}^{*}\,{+}\,\zq^{\intercal}\bar{C}_{1})d\zq,\;\; 
 (\zq^{*}B_{2}^{*}\,{+}\,\zq^{\intercal}\bar{C}_{2})d\zq
\end{equation*}
are linearly dependent:  $\sfM^{\sharp}\,{=}\, \sfM_{(m-1,2)}\,{\cup}\,\sfM_{m,0}$, with 
\begin{equation*} 
\begin{cases}
 \sfM_{(m,0)}=\{(\wq,\zq)\,{\in}\,\sfM\mid \Lambda_{B_{1},C_{1}}(\zq)\,{=}\,0,\; \Lambda_{B_{2},C_{2}}(\zq)\,{=}\,0\},\\
 \sfM_{(m-1,2)}=\left\{(\wq,\zq)\,{\in}\,\sfM\,{\setminus}\sfM_{(m,0)}\,\left|\, \begin{aligned} 
 \exists\,(\lambdaup^{1},\lambdaup^{2})\,{\in}\,\C^{2}{\setminus}\{0\} \;\;
 \text{s.t.}\qquad\\
 \lambdaup^{1}\Lambda_{B_{1},C_{1}}(\zq)\,{+}\,\lambdaup^{2}\Lambda_{B_{2},C_{2}}(\zq)\,{=}\,0
 \end{aligned}\right.\right\}.
  \end{cases}
\end{equation*}
If $\ker(\Lambda_{B_{1},C_{1}})\,{\cap}\,\ker(\Lambda_{B_{2},C_{2}})\,{=}\,\{0\}$, then $\sfM_{(m,0)}\,{=}\,\{0\}$.
Let us assume that $\Lambda_{B_{1},C_{1}}$ is nonsingular. Then 
\begin{equation*}
 \sfM_{(m-1,2)}=\{(\wq,\zq)\,{\in}\,\sfM{\setminus}\{0\}\mid \exists\,\lambdaup\,{\in}\,\C\;\text{s.t.}\;
 \Lambda_{B_{2},C_{2}}(\zq)=\lambdaup\,\Lambda_{B_{1},C_{1}}(\zq)\}.
\end{equation*}
In this case the condition that  
$\ker(\Lambda_{B_{2},C_{2}}{-}\,\lambdaup\,\Lambda_{B_{1},C_{1}})$
is, for every $\lambdaup\,{\in}\,\C$, either $0$, or a real or a complex line 
ensures the transversality of $CR$-singularities of type $(m{-}1,2)$. 
This condition can also be formulated by requiring that, for all complex $\lambdaup,$ the $\R$-linear
endomorphism $(\Lambda_{B_{2},C_{2}}{-}\,\lambdaup\,\Lambda_{B_{1},C_{1}})$ of $\C^{m}$
is \emph{semisimple}.
\par
Under the transversality conditions above, the $CR$-singular subset
$\sfM^{\sharp}$ has
transversal complex
dimension $1$. Thus, \par
\textsl{If all nontrivial real linear combinations of the hermitian symmetric matrices
$(B_{1}{+}B^{*}_{1}),\, i(B_{1}{-}B_{1}^{*}),
\,(B_{2}{+}B^{*}_{2}),\, i(B_{2}{-}B_{2}^{*})$ have at least $4$ non positive and at least 
$3$ positive and $3$ negative
eigenvalues, then $\sfM$ has the holomorphic extension property.}\par
We can take e.g. $m\,{=}\,8$ and 
\begin{equation*}
 B_{1}= 
\begin{pmatrix}
 0 & \Id_{4}\\
 0 & 0
\end{pmatrix}, \; C_{1}= 
\begin{pmatrix}
 0 & 0\\
 \Id_{4} & 0
\end{pmatrix}
,\; B_{2}= \left( 
\begin{smallmatrix}
 0&0&0&0&0&0&0&0\\
  0&0&0&0&0&0&0&1\\
   0&0&0&0&0&0&1&0\\
    0&0&0&0&0&1&0&0\\
     0&0&0&0&0&0&0&0\\
      0&0&0&0&0&0&0&0\\
       0&0&0&0&0&0&0&0\\
        0&0&0&0&0&0&0&0\\
\end{smallmatrix}
\right)
,\; C_{2}= 0.
\end{equation*}
\end{exam}
\par\smallskip
Weak pseudoconcavity may be lost under small perturbations. Thus in the following, to get
examples of smooth manifolds on which
maximum modulus principles and holomorphic
extendability are preserved also after small perturbations, 
we consider how strict pseudoconcavity is preserved near a $CR$ singularity.
\begin{lem}
 Let $\sfM$ be described by \eqref{e7.13} near $0$. Then, if $H_{\vec{c}}$ has at least $\qq$ positive
 and $\qq$ negative eigenvalues for all $\vec{c}\,{\neq}\,0$, with $p\,{<}\,\qq$, 
 then $\sfM$ is a strictly $(q{-}p)$-pseudoconcave
 $CR$ manifolds at all $CR$ regular points close to $0$. \end{lem}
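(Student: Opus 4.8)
The plan is to compute the directional Levi form of $\sfM$ at $CR$-regular points near $0$, to show that it is, up to a small error, the restriction of $H_{\vec c}$ to a subspace of $\C^m$ of codimension $p$, and then to finish by an eigenvalue–interlacing estimate made uniform by compactness.

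First I would recall the dictionary between the directional Levi form and the complex Hessian of the defining functions. Writing $\rho_1,\dots,\rho_k$ for the $k$ real defining functions read off from \eqref{e7.13} (the $h$ functions $\uq^j-\tfrac12\zq^*H_j\zq$ together with the $2p$ functions $\re\tq^\iota,\im\tq^\iota$), a nonzero $\xiup\in H^0_\pct\sfM$ is $\xiup=\sum_i c^i\,d^c\rho_i|_\pct$ for some $\vec c\in\R^k$, and $\cL_\sfM^\xiup$ is the restriction to $H_\pct\sfM$ of $i\,\partial\bar\partial\rho_{\vec c}(\pct)$ with $\rho_{\vec c}=\sum_i c^i\rho_i$. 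Evaluating at $\pct=0$, where $H_0\sfM=\C^m_\zq$, I would check that the pluriharmonic parts drop out, that the anti-holomorphic $C_\iota$-terms and the $\wq$-supported $D_\iota\vq$-terms contribute nothing on $\C^m_\zq$, and that the splitting \eqref{e7.15} of the $B_\iota$ into Hermitian pieces makes the $k$ functions produce exactly the matrices $H_1,\dots,H_k$. Hence $i\,\partial\bar\partial\rho_{\vec c}(0)|_{\C^m_\zq}=H_{\vec c}$, so the hypothesis says precisely that the directional Levi form at $0$ on the $m$-dimensional space $H_0\sfM$ has at least $\qq$ negative (equivalently, replacing $\vec c$ by $-\vec c$, at least $\qq$ positive) eigenvalues in every nonzero direction.

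Next I would pass to a nearby $CR$-regular point $\pct$. There the $k$ forms $\partial\rho_i(\pct)$ are $\C$-independent (this is exactly $CR$-regularity; at $0$ the pairs $\partial(\re\tq^\iota),\partial(\im\tq^\iota)$ are proportional, which is the $CR$ singularity), so $H_\pct\sfM$ has dimension $m-p$ and the projection $v\mapsto v_\zq$ maps it isomorphically onto a subspace $V_\pct\subset\C^m$ of codimension $p$. Solving $\partial\rho_i(\pct)(v)=0$ shows that $v\in H_\pct\sfM$ has $v_\wq,v_\tq=O(|\pct|)\,|v_\zq|$; since the cross-terms of $i\,\partial\bar\partial\rho_{\vec c}(\pct)$ are then evaluated on these small components while the $\zq\zq$-block is $H_{\vec c}+O(|\pct|)$, transporting $\cL_\sfM^\xiup$ to $V_\pct$ through the projection yields the restriction of $H_{\vec c}+O(|\pct|)$ to $V_\pct$, the error being linear in $\vec c$ with operator norm $O(|\pct|)\,\|\vec c\|$.

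Finally I would run the eigenvalue count. By homogeneity and compactness of $\{\|\vec c\|=1\}$ the hypothesis gives a $\delta>0$ with at least $\qq$ eigenvalues of $H_{\vec c}$ below $-\delta\|\vec c\|$ for every $\vec c$; the Courant--Fischer interlacing inequalities then force at least $\qq-p$ eigenvalues of $H_{\vec c}|_{V_\pct}$ below $-\delta\|\vec c\|$, since restriction to a codimension-$p$ subspace shifts eigenvalue indices by at most $p$. Taking $\pct$ close enough to $0$ that the $O(|\pct|)$ perturbation has norm below $\tfrac12\delta$ and invoking Weyl's perturbation bound, I conclude that $\cL_\sfM^\xiup$ has at least $\qq-p$ strictly negative eigenvalues for every nonzero $\xiup$; as $p<\qq$ this is a positive number, so $\sfM$ is strictly $(\qq-p)$-pseudoconcave at every $CR$-regular point near $0$. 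The main obstacle is the second step: verifying that $H_\pct\sfM$ really projects isomorphically onto a codimension-$p$ subspace of $\C^m$ and that all off-diagonal contributions of $C_\iota$, $D_\iota$ and the cubic remainders perturb the transported Levi form only by $O(|\pct|)$, since it is this clean reduction to the constant forms $H_{\vec c}$ that makes the interlacing argument applicable.
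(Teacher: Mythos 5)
Your proposal is correct and follows essentially the same route as the paper: the paper likewise reduces to the quadric, recentres at a nearby point to see that the Levi form there is the restriction of $H_{\vec c}$ to the holomorphic tangent space, which is a complex $(n{-}k)$-plane of $\C^{n-k+p}=\C^m$, and concludes by the observation that such a codimension-$p$ restriction loses at most $p$ positive and $p$ negative eigenvalues. You merely make explicit the uniformity details (compactness in $\vec c$, Weyl-type perturbation bounds for the $O(|\pct|)$ and cubic error terms) that the paper's terser argument leaves implicit.
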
 
\begin{proof} It suffices to prove our contents for the quadric obtained from 
\eqref{e7.13} by erasing the $0(3)$ addenda. Indeed 
perturbations by defining functions vanishing to the second
order at $0$ specified above preserve 
strict
pseudoconcavity on a neighbourhood of $0$.
Take the real coordinates of page\,\pageref{l7.9}.
 We can use $x,y,\vq$ as real coordinates on a neighbourhood of $0$ in $\sfM$. At a point $(x_{0},y_{0},\vq_{0})$
 we obtain 
\begin{equation*} 
\begin{cases}
2\uq^{j}=(\zq{-}\zq_{0})^{*}H_{j}\zq_{0}+\zq_{0}^{*}H_{j}(\zq{-}\zq_{0})+(\zq{-}\zq_{0})^{*}
 H_{j}(\zq{-}\zq_{0}) \; 1{\leq}i{\leq}k{-}2,\\
2\rr^{\iota}= (\zq{-}\zq_{0})^{*}H_{h+2\iota-1}\zq_{0}+\zq_{0}^{*}H_{h+2\iota-1}(\zq{-}\zq_{0})+(\zq{-}\zq_{0})^{*}
 H_{h+2\iota-1}(\zq{-}\zq_{0})+\cdots,\\
2\sq^{\iota}= (\zq{-}\zq_{0})^{*}H_{h+2\iota}\zq_{0}+\zq_{0}^{*}H_{h+2\iota}(\zq{-}\zq_{0})+(\zq{-}\zq_{0})^{*}
 H_{h+2\iota}(\zq{-}\zq_{0})+\cdots,
\end{cases}
\end{equation*}
where the dots represent pluriharmonic summands. By restricting the Hermitian forms $H_{\vec{c}}$ to
a complex $n{-}k$-plane of 
$\C^{n-k+p}$ we may loose at most $p$ positive and $p$ negative eigenvalues. 
\end{proof} 

To construct 
\textit{quadrics}
with properties of \textit{strict} pseudoconcavity,
we can use results from \cite{ad62,ALP65}.\par
A positive integer $n$ can be uniquely decomposed by 
\begin{equation}
 n=2^{a+4b}(2c+1),\;\; \text{with $a,b,c\,{\in}\,\Z_{+}$ and $1{\leq}a{\leq}3$.}
\end{equation}
Set 
\begin{equation} \rhoup_{\C}(n)
=2(a+4b)+2.
\end{equation}
A Hermitian symmetric $m\,{\times}\,m$ matrix can have at most $\left[\tfrac{m}{2}\right]$
eigenvalues of opposite signs. By \cite{ad62,ALP65} we have
\begin{lem}\label{l7.6}
 The maximal dimension of a linear space of $2\qq\,{\times}\,2\qq$ 
hermitian symmetric matrices
 having at least $\qq$ eigenvalues of each sign is 
$\rhoup_{\C}(\qq){+}1$.\qed
\end{lem}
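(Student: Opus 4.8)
The plan is to reduce the statement to the theorem of Adams, Lax and Phillips on linear families of complex matrices whose real linear combinations are nonsingular, and then to realize the extremal family explicitly.

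First I would dispose of the signature condition. For a real subspace $V$ of $2\qq\,{\times}\,2\qq$ Hermitian matrices with $\dim_{\R}V\geq 2$, I claim that the requirement that every nonzero $H\in V$ have at least $\qq$ eigenvalues of each sign is equivalent to the requirement that every nonzero $H\in V$ be invertible. Indeed, on the connected set $V\,{\setminus}\,\{0\}$ the signature $H\mapsto n_{+}(H)-n_{-}(H)$ of an invertible Hermitian matrix is a continuous $\Z$-valued function, hence constant; since $-H\in V$ carries the opposite signature, that constant must be $0$, which forces the balanced type $(\qq,\qq)$ and in particular invertibility. As the maximal dimension we seek is clearly at least $2$, this allows me to replace ``at least $\qq$ eigenvalues of each sign'' by ``nonsingular'' throughout.

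Next I would establish the lower bound by a construction. By \cite{ad62,ALP65} there is a real space $W\subset\C^{\qq\times\qq}$ of complex $\qq\,{\times}\,\qq$ matrices, of real dimension $\rhoup_{\C}(\qq)$, all of whose nonzero real linear combinations are invertible. Embedding $M\in W$ as the off-diagonal Hermitian block $\left(\begin{smallmatrix}0 & M^{*}\\ M & 0\end{smallmatrix}\right)$ and adjoining the diagonal involution $\eta=\diag(\Id_{\qq},-\Id_{\qq})$ yields a $(\rhoup_{\C}(\qq){+}1)$-dimensional real space of Hermitian matrices $\left(\begin{smallmatrix} t\Id_{\qq} & M^{*}\\ M & -t\Id_{\qq}\end{smallmatrix}\right)$. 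A Schur-complement computation gives, for $t\neq0$, determinant $(-1)^{\qq}\det(t^{2}\Id_{\qq}+MM^{*})\neq0$ since $MM^{*}$ is positive semidefinite, while for $t=0$ the determinant is $(-1)^{\qq}|\det M|^{2}\neq0$ because $M\in W$ is nonzero hence invertible. So every nonzero element is invertible, and by the first step it has signature $(\qq,\qq)$.

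For the reverse inequality I would invert this construction: starting from a space $V$ of invertible Hermitian matrices of dimension $D\geq2$, I normalize one element to $\eta$ by a congruence $H\mapsto P^{*}HP$ (which preserves Hermiticity, invertibility and signatures), and then read off from the remaining directions a space of $\qq\,{\times}\,\qq$ complex matrices with nonsingular real combinations, so that $D-1\leq\rhoup_{\C}(\qq)$ and hence $D\leq\rhoup_{\C}(\qq)+1$. The hard part is exactly this extremal bound for complex matrices: it is the Radon--Hurwitz-type computation of \cite{ad62,ALP65}, carried out through the representation theory of real Clifford algebras and Bott periodicity, which is why the optimal number is controlled by the $2$-adic valuation $\mathrm{ord}_{2}(\qq)=a+4b$ and equals $2(a+4b)+2$. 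I would therefore conclude by quoting that theorem, the whole argument reducing to the two elementary passages above together with the cited computation.
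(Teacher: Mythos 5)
Your first two steps are sound, and they go further than the paper does: the paper simply quotes this statement from \cite{ad62,ALP65} (the lemma is stated with no argument beyond the citation), whereas you attempt to reduce the Hermitian statement to the plain Adams--Lax--Phillips theorem on spaces of nonsingular complex $\qq\,{\times}\,\qq$ matrices. The equivalence of ``balanced signature $(\qq,\qq)$'' with ``invertible'' for spaces of dimension at least $2$ is correct (connectedness of $V\,{\setminus}\,\{0\}$ plus $n_{+}(-H)=n_{-}(H)$), and the lower--bound construction
$(t,M)\mapsto\left(\begin{smallmatrix} t\Id_{\qq} & M^{*}\\ M & -t\Id_{\qq}\end{smallmatrix}\right)$
with the Schur--complement determinant $(-1)^{\qq}\det(t^{2}\Id_{\qq}+MM^{*})$ is complete and correct.

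The gap is in the upper bound. After the congruence that normalizes one element to $\eta=\diag(\Id_{\qq},-\Id_{\qq})$, the remaining elements of $V$ are \emph{arbitrary} Hermitian matrices, not block off--diagonal ones, and ``read off a space of $\qq\,{\times}\,\qq$ matrices with nonsingular real combinations'' is exactly the step that needs an argument. The only natural linear candidate --- take the lower--left block $N$ of $H=\left(\begin{smallmatrix} A & N^{*}\\ N & B\end{smallmatrix}\right)$ --- is injective on a complement of $\R\eta$ but does \emph{not} land in invertible matrices: for $\qq=2$ take
$A=B=\left(\begin{smallmatrix}0&1\\1&0\end{smallmatrix}\right)$, $N=\left(\begin{smallmatrix}1&0\\0&0\end{smallmatrix}\right)$; then
$\det(t\eta+H)=t^{4}-t^{2}+1>0$ for all real $t$, so $\R\eta\,{+}\,\R H$ consists of invertible Hermitian matrices away from $0$, yet $N$ is singular. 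So inverting your lower--bound construction is not possible elementwise. What actually makes the reduction work (and is the content of the Adams--Lax--Phillips correction) is a preliminary normalization: one first deforms the family to pairwise anticommuting Hermitian involutions $A_{1}=\eta,A_{2},\hdots,A_{D}$ with $A_{i}A_{j}+A_{j}A_{i}=2\deltaup_{ij}\Id$; anticommutation with $\eta$ then forces $A_{i}=\left(\begin{smallmatrix}0 & M_{i}^{*}\\ M_{i}& 0\end{smallmatrix}\right)$ with $M_{i}$ unitary and $M_{i}^{*}M_{j}+M_{j}^{*}M_{i}=0$, whence $\|{\sum}t_{i}M_{i}x\|^{2}=({\sum}t_{i}^{2})\|x\|^{2}$ and $D-1\leq\rhoup_{\C}(\qq)$. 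Either supply that normalization lemma, or do what the paper does and cite the Hermitian form of the theorem directly.
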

Bu using Lemma\,\ref{l7.6} one can construct several \textit{quadrics} with the correct 
amount of pseudoconavity for any codimension, providing examples where holomorphic
extendability holds at transversal $CR$ singularities.  
\begin{exam}
 The Clifford algebra generated by $4$ anti-commuting imaginary units is the algebra $\Hb(2)$ of
 $2\,{\times}\,2$ matrices of quaternions. This can also be represented as a subalgebra of the
 algebra $\C(4)$ of complex $4\,{\times}\,4$-matrices. In particular, the imaginary units can
 be represented as  complex $4\times{4}$ invertble matrices $Q_{1},Q_{2},Q_{3},Q_{4}$. \par
 We can take, e.g. 
\begin{gather*}
 Q_{1}= 
\begin{pmatrix}
 0 & 0 &1& 0\\
 0 & 0 &0 & 1\\
 {-}1& 0 & 0 & 0\\
 0 & {-}1 & 0 & 0
\end{pmatrix},\;
\qquad Q_{2}= 
\begin{pmatrix}
 0 & 0 &i& 0\\
 0 & 0 &0 & -i\\
 i& 0 & 0 & 0\\
 0 & {-}i & 0 & 0
\end{pmatrix},\\
 Q_{3}= 
\begin{pmatrix}
 0 & 0 &0& 1\\
 0 & 0 &{-}1 &0 \\
 0& 1 & 0 & 0\\
 {-}1& 0 & 0 & 0
\end{pmatrix},\;
\qquad Q_{4}= 
\begin{pmatrix}
 0 & 0 &0& i\\
 0 & 0 &i &0 \\
 0& i & 0 & 0\\
 i& 0 & 0 & 0
\end{pmatrix}
\end{gather*}
 Then all nonzero matrices in the real span of 
\begin{align*}
 H_{1}= 
\begin{pmatrix}
 0 & Q_{1}\\
 Q_{1}^{*}& 0
\end{pmatrix},\; H_{2}= 
\begin{pmatrix}
 0 & Q_{2}\\
 Q_{2}^{*}& 0
\end{pmatrix},\; H_{3}= 
\begin{pmatrix}
 0 & Q_{3}\\
 Q_{3}^{*}& 0
\end{pmatrix},\; H_{4}= 
\begin{pmatrix}
 0 & Q_{4}\\
 Q_{4}^{*}& 0
\end{pmatrix}\\
H_{5}= 
\begin{pmatrix}
 \Id_{4}& 0\\
 0 & {-}\Id_{4}\;
\end{pmatrix}
\end{align*}
are Hermitian symmetric with $4$ positive and $4$ negative eigenvalues and 
\begin{equation*}
 \sfM\,:\, 
\begin{cases}
 \wq^{1}+\bar{\wq}^{1}=\zq^{*}H_{1}\zq,\\
 \wq^{2}+\bar{\wq}^{2}=\zq^{*}H_{2}\zq,\\
  \wq^{3}+\bar{\wq}^{3}=\zq^{*}H_{3}\zq,\\
 \tq=\zq^{*}(H_{5}+i\,H_{5})\zq,\\
\end{cases}\quad \zq\in\C^{8},\; \wq^{1},\wq^{2},\wq^{3},\tq\in\C
\end{equation*}
is a quadric of $\C^{12}$ having a $3$-dimensional smooth submanifold $\sfM_{8,3}$ of
$CR$ singularities and 
the holomorphic extension property at all points, because its open dense set $\sfM_{7,5}$ is 
a strictly $3$-pseudoconcave $CR$ manifold. 
If $\psiup_{1},\psiup_{2},\psiup_{3},\psiup_{4},\psiup_{5}$ are smooth real-valued functions with compact
support in $\C^{n}$, then, for small real $\epsilonup$, also 
\begin{equation*}
 \sfM_{\epsilonup}\,:\, 
\begin{cases}
 \wq^{1}+\bar{\wq}^{1}=\zq^{*}H_{1}\zq+\epsilonup\psiup_{1},\\
 \wq^{2}+\bar{\wq}^{2}=\zq^{*}H_{2}\zq+\epsilonup_{2}\psiup_{2},\\
  \wq^{3}+\bar{\wq}^{3}=\zq^{*}H_{3}\zq+\epsilonup_{2}\psiup_{3},\\
 \tq=\zq^{*}(H_{4}+i\,H_{5})\zq+\epsilonup(\psiup_{4}+i\psiup_{5}),\\
\end{cases}\quad \zq\in\C^{8},\; \wq^{1},\wq^{2},\wq^{3},\tq\in\C
\end{equation*}
is,  for small real $\epsilonup$, a smooth real submanifold of $\C^{12}$, having a 
$3$-dimensional real sumbanifold $\sfM_{\epsilonup}^{\sharp}$ of $CR$ singularities of type $(8,3)$
and the holomorphic extension property.
\begin{exam}
 Let us consider the $3\,{\times}\,4$ matrices 
\begin{equation*}
 R_{1}= 
\begin{pmatrix}
 1 & 0 & 0 & 0 \\
 0 & 1 & 0 & 0 \\
 0 & 0 & 1 & 0 \\
 \end{pmatrix}, \quad R_{2}= 
\begin{pmatrix}
 0 & 1 & 0 & 0 \\
 0 & 0 & 1 & 0 \\
 0 & 0 & 0 & 1 \\
\end{pmatrix}
\end{equation*}
and the $7\,{\times}\,7$ Hermitian matrices 
\begin{equation*}
 H_{1}= 
\begin{pmatrix}
 0 & R_{1}\\
 R_{1}^{*}& 0
\end{pmatrix},\;\; H_{2}= 
\begin{pmatrix}
 0 & R_{2}\\
 R_{2}^{*}& 0
\end{pmatrix},\;\; H_{3}= 
\begin{pmatrix}
 \Id_{3} & 0\\
 0 & {-}\Id_{4}
\end{pmatrix}.
\end{equation*}
All their linear combinations have at least $3$ positive and $3$ negative eigenvalues. Thus, if
$\psiup_{1},\psiup_{2},\psiup_{3}$ are real-valued smooth functions with compact support in $\C^{9}\,{=}\,
\C^{7}_{\zq}\,{\times}\,\C^{2}_{\wq}$, the smooth real submanifold 
\begin{equation*}
 \sfM\,:\quad 
\begin{cases}
 \wq+\bar{\wq}=\zq^{*}H_{1}\zq+\epsilonup\psiup_{1},\\
 \tq\,{=}\,\tfrac{1}{2}\zq^{*}(H_{2}\,{+}\,i\,H_{3})\zq+4\zq^{*}\bar{\zq}+\epsilonup(\psiup_{2}\,{+}\,i\,\psiup_{3})
\end{cases}
\end{equation*} is smooth, 
of real codimension $3$ in $\C^{9}$, has a $1$-dimensional smooth singular locus $\sfM_{(7,1)}$
and, being strictly $2$-pseudoconcave at all points of $\sfM_{(6,3)}$, has the holomorphic extension property.
\end{exam}
\par
When $D\,{=}\,0$, 
the set $\sfM^{\sharp}$ of
$CR$-singular points of type $(n{-}k{+}1,k{-}2)$ of the quadric \eqref{e7.7}
is parametrized by the kernel of $\Lambda_{B,C,0}$, which is 
the cartesian product of $\R^{h}$ times the
kernel 
of the $\R$-linear map  $\Lambda_{B,C}$ of \eqref{e7.11}.
This is a real lineaR subspace of $\C^{m}$ 
and thus a $CR$ submanifold of type $(r_{B,C},s_{B,C})$ for a pair
on nonnegative integers $r_{B,C},s_{B,C}$ 
with $2r_{B,C}\,{+}\,s_{B,C}\,{\leq}\,m$. Its lifting $\sfM^{\sharp}$ is 
a $CR$ submanifold of type $(r_{B,C},s_{B,C}{+}h)$ and hence has transversal complex dimension
$\ell\,{=}\,\left(r_{B,C}{+}\left[\tfrac{s_{B,C}{+}h{+}1}{2}\right]\right)$ (see Example\,\ref{e6.6}). \par\smallskip
Let a slight modification of \eqref{e7.7} be described by
\begin{equation}\label{e7.19}
 \sfM\,:\, 
\begin{cases}
 \wq^{1}\,{+}\,\bar{\wq}^{1}=\zq^{*}H_{1}\zq+\epsilonup\psiup_{1}(\zq,\bar{\zq},\vq),\\
 \hdots\hdots\hdots\hdots\\
  \wq^{h}\,{+}\,\bar{\wq}^{h}=\zq^{*}H_{h}\zq+\epsilonup\psiup_{h}(\zq,\bar{\zq},\vq),\\
 \tq=\tfrac{1}{2}\zq^{*}B\zq+\tfrac{1}{2}\zq^{*}C\,\bar{\zq}
\end{cases}\end{equation}
for smooth real-valued $\psiup_{1},\hdots,\psiup_{h}$, vanishing to the second order at $0$.
\par
\smallskip
\textsl{
 Let $\sfM$ be defined by \eqref{e7.19}. 
 Then 
 there is an open neighbourhood $U$ of $0$ in $\sfM$ such that 
 all $CR$ singularities of $\sfM$ in $U$ are of type $(m,h)$ and 
$U\,{\cap}\,\sfM_{(m,h)}$ is contained in a $CR$ submanifold of type $(r_{B,C},h{+}s_{B,C})$ of $\sfM$.}
\par
\textsl{If $\sfM_{(m-1,h+2)}$ is strictly $\qq$-pseudoconcave with $q\,{>}\,\ell$, then $\sfM$ has the
holomorphic extension property.}\par\smallskip
We note that a small perturbation of the last equation in \eqref{e7.19} would make the $CR$ singularity
at $0$ \textit{transversal}, reducing the locus of $CR$ singularities 
 to a totally real $h$ dimensional submanifold.
\end{exam} 

\bibliographystyle{amsplain}
\renewcommand{\MR}[1]{}
\bibliography{cr}

\end{document}